\newcommand\SmallMatrix[1]{{%
  \tiny \arraycolsep=0.08\arraycolsep\ensuremath{\begin{pmatrix}#1\end{pmatrix}}}}
\newcommand\Matrix[1]{{%
  \small \arraycolsep=0.8\arraycolsep\ensuremath{\begin{pmatrix}#1\end{pmatrix}}}}
\def\BState{\State\hskip-\ALG@thistlm}
\algnewcommand\And{\textbf{and} }
\newtheorem{theorem}{Theorem}[section]
\newtheorem{lemma}[theorem]{Lemma}
\newtheorem{problem}[theorem]{Problem}
\theoremstyle{definition}
\newtheorem{definition}[theorem]{Definition}
\newtheorem{example}[theorem]{Example}
\title{Alternating Sign Hypermatrix Decompositions of Latin-like Squares}
\date{}
\author{Cian O'Brien}
\affil{National University of Ireland, Galway}
\begin{document}
\maketitle

\setlength{\parindent}{0pt}

\vspace{-0.7cm}\begin{abstract}
\noindent To any $n \times n$ Latin square $L$, we may associate a unique sequence of mutually orthogonal permutation matrices $P = P_1, P_2, ..., P_n$ such that $L = L(P) = \sum kP_k$. Brualdi and Dahl (2018) described a generalisation of a Latin square, called an \emph{alternating sign hypermatrix Latin-like square (ASHL)}, by replacing $P$ with an \emph{alternating sign hypermatrix (ASHM)}. An ASHM is an $n \times n \times n$ (0,1,-1)-hypermatrix in which the non-zero elements in each row, column, and vertical line alternate in sign, beginning and ending with $1$. Since every sequence of $n$ mutually orthogonal permutation matrices forms the planes of a unique $n \times n \times n$ ASHM, this generalisation of Latin squares follows very naturally, with an ASHM $A$ having corresponding ASHL $L = L(A) =\sum kA_k$, where $A_k$ is the $k^{\text{th}}$ plane of $A$. This paper addresses some open problems posed in Brualdi and Dahl's article, firstly by characterising how pairs of ASHMs with the same corresponding ASHL relate to one another and providing a tight lower bound on $n$ for which two $n \times n \times n$ ASHMs can correspond to the same ASHL, and secondly by exploring the maximum number of times a particular integer may occur as an entry of an $n \times n$ ASHL. A general construction is given for an $n \times n$ ASHL with the same entry occurring $\lfloor\frac{n^2 + 4n -19}{2}\rfloor$ times, improving considerably on the previous best construction, which achieved the same entry occuring $2n$ times.
\end{abstract}

\section{Introduction}

\begin{definition}
An \emph{alternating sign matrix (ASM)}, is a ${(0,1,-1)}$-matrix in which the non-zero elements in each row and column alternate in sign, beginning and ending with $1$.
\end{definition}

\begin{definition}
The $n \times n$ \emph{diamond ASM} is an ASM with the maximum number of non-zero entries for given $n$. For odd $n$, there is a unique diamond ASM $D_n$. For even $n$, there are two diamond ASMs $D_n$ and $D_n'$.
\end{definition}

\begin{example}
The following are the two $4 \times 4$ diamond ASMs $D_4$ and $D_4'$, and the $5 \times 5$ diamond ASM $D_5$.

\includegraphics[width=\textwidth]{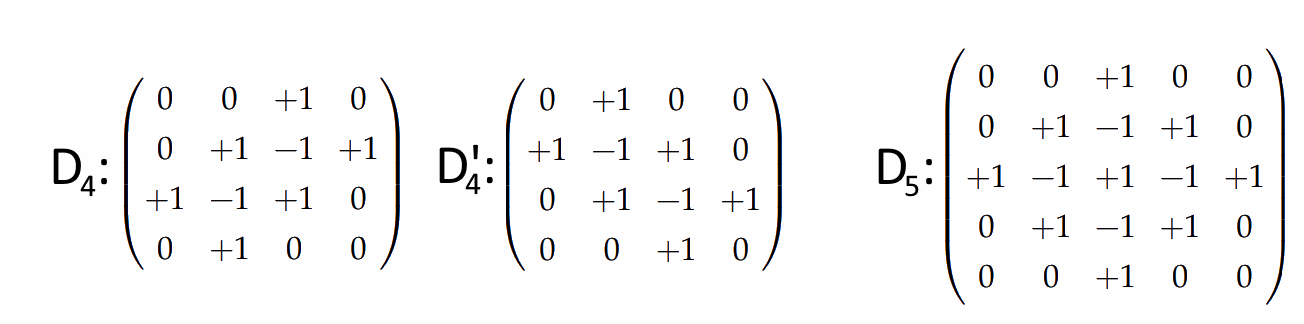}
\end{example}

It is easily observed that permutation matrices are examples of ASMs, which arise naturally as the unique smallest lattice containing the permutation matrices of the \emph{Bruhat order} \cite{lattice}. 

\begin{definition}
A \emph{Latin square} of order $n$ is an $n \times n$ array containing $n$ symbols such that each symbol occurs exactly once in each row and column.
\end{definition}

Any $n \times n$ Latin square $L$ with symbols $1, 2, \dots, n$ can be decomposed into a unique sequence $P$ of $n \times n$ mutually orthogonal permutation matrices $P = P_1, P_2, \dots, P_n$ by the following relation.

\[L = L(P) = \sum_k kP_k\]

For example, consider the following latin square.

\[\Matrix{
    1 & 2 & 3 \\
    2 & 3 & 1 \\
    3 & 1 & 2} 
=
1\Matrix{
      1 & 0 & 0 \\
      0 & 0 & 1 \\
      0 & 1 & 0}
+2\Matrix{
      0 & 1 & 0 \\
      1 & 0 & 0 \\
      0 & 0 & 1}
+3\Matrix{
      0 & 0 & 1 \\
      0 & 1 & 0 \\
      1 & 0 & 0}\]

This decomposition leads to a natural extension of the concept of a Latin square, first introduced by Brualdi and Dahl \cite{ashmbib}, by replacing the sequence of permutation matrices with planes of an \emph{alternating sign hypermatrix (ASHM)}. Before we define these objects, we must first define some features of a hypermatrix.

\*

An $n \times n \times n$ hypermatrix $A = [a_{ijk}]$ has has $n^2$ lines of each of the $3$ following types. Each line has $n$ entries.
\begin{itemize}
\item \emph{Row lines} $A_{*jk} = [a_{ijk} : i = 1, \dots, n]$, for given $1 \leq j,k \leq n$;
\item \emph{Column lines} $A_{i*k} = [a_{ijk} : j = 1, \dots, n]$, for given $1 \leq i,k \leq n$;
\item \emph{Vertical lines} $A_{ij*} = [a_{ijk} : k = 1, \dots, n]$, for given $1 \leq i,j \leq n$.
\end{itemize}
In this paper, we refer to a \emph{plane} $P_k(A)$ of $A$ to be the horizontal plane $A_{**k} = [a_{ijk}:i,j=1,\dots,n]$, for given $1 \leq k \leq n$.

\begin{definition} An \emph{alternating sign hypermatrix (ASHM)} is a $(0,\pm 1)$-hypermatrix for which the non-zero entries in each row, column, and vertical line of the hypermatrix alternate in sign, starting and ending with $+1$. \end{definition}

For example, the following is a $3 \times 3$ ASHM.

\[A = \Matrix{
      0 & 0 & 1 \\
      0 & 1 & 0 \\
      1 & 0 & 0}
\hspace{-0.1cm}\nearrow\hspace{-0.1cm}\Matrix{
      0 & 1 & 0 \\
      1 & -1 & 1 \\
      0 & 1 & 0}
\hspace{-0.1cm}\nearrow\hspace{-0.1cm}\Matrix{
      1 & 0 & 0 \\
      0 & 1 & 0 \\
      0 & 0 & 1}\]

The north-east arrow $P_k(A) \nearrow P_{k+1}(A)$ is used to denote that $P_k(A)$ is below $P_{k+1}$. For simplicity, for the remainder of this paper, we will omit the zero entries of all ASHMs, and represent all $\pm 1$ entries with $+$ or $-$.

\begin{definition} An \emph{ASHL} is an $n \times n$ matrix $L$ constructed from an $n \times n \times n$ ASHM $A$ by 
\[L = L(A) = \sum_k kP_k(A)\text{.}\] \end{definition}

From the previous example, we then have the following ASHL.
\[L(A) = 1\Matrix{
       &  & + \\
       & + &  \\
      + &  & }
+2\Matrix{
       & + &  \\
      + & - & + \\
       & + & }
+3\Matrix{
      + &  &  \\
       & + &  \\
       &  & +}
=\Matrix{
    3 & 2 & 1 \\
    2 & 2 & 2 \\
    1 & 2 & 3}\]

Brualdi and Dahl \cite{ashmbib} posed a number problems about ASHLs, the following of which are addressed in this paper.
\begin{itemize}
\item Given an $n \times n$ ASHL $L$, let $A_n(L)$ be the set of all $n \times n \times n$ ASHMs $A$ such that $L(A) = L$. Investigate $A_n(L)$.
\item What is the maximum number of times an integer can occur as an entry of an $n \times n$ ASHL?
\end{itemize}
In this paper, the relationship between two ASHMs that generate the same ASHL is described, and a general construction is given for building an $n \times n$ ASHL containing $\lfloor\frac{n^2 + 4n -19}{2}\rfloor$ copies of one symbol. This improves on Brualdi and Dahl's lower bound of $2n$ for the maximum number of times that one symbol can appear in an $n \times n$ ASHL.

%%%%%%%%%%%%%%%%%%%%%%%%%%%%%%%%%%%%%%%%%%

\section{ASHLs With Multiple ASHM-Decompositions}

In Brualdi and Dahl's paper \cite{ashmbib}, it was proven that for a Latin square $L$, if $L = L(A)$ for some ASHM $A$, then $A$ must be a permutation hypermatrix. The following question was then posed.

\begin{problem}\label{problem1} Given an $n \times n$ ASHL $L$, let $A_n(L)$ be the set of all $n \times n \times n$ ASHMs $A$ such that $L(A) = L$. Investigate $A_n(L)$. \end{problem}

This is presented in \cite{ashmbib} as a completely open problem, with no examples of distinct ASHMs with the same corresponding ASHL given. This problem is motivated by the observation that in the case of a Latin square $L$, $A_n(L)$ contains exactly one ASHM whose planes form a set of mutually orthogonal permutation matrices. Before we discuss how two ASHMs in $A_n(L)$ relate to one another, it is useful to introduce the following definition, and more generally examine how any pair of $n \times n \times n$ ASHMs relate to one another.

\begin{definition} A \emph{T-block} $T_{i_1,j_1,k_1:\,i_2,j_2,k_2}$ is a hypermatrix $[t_{ijk}]$ such that
\[t_{ijk} =  \begin{cases} 
      \color{white}-\color{black}1  & (i,j,k) = (i_1,j_1,k_1), (i_2,j_2,k_1), (i_2,j_1,k_2), \text{or} (i_1,j_2,k_2) \\
      -1 & (i,j,k) = (i_2,j_1,k_1), (i_1,j_2,k_1), (i_1,j_1,k_2), \text{or} (i_2,j_2,k_2) \\
      \color{white}-\color{black}0  & \text{otherwise} 
   \end{cases}
\] where $i_1 < i_2, j_1 < j_2$, and $k_1 < k_2$.\end{definition}

A T-block can be most usefully visualised as an $n \times n \times n$ matrix containing the subhypermatrix
\[\pm \left[\Matrix{
    + & -\\
    - & +}
\nearrow
\Matrix{
    - & +\\
    + & -}\right]\]
such that these are the only non-zero entries. This is a 3-d extension of a concept defined by Brualdi, Kiernan, Meyer, and Schroeder \cite{brualdibib}.

\*

The following lemma will be needed to investigate $A_n(L)$.

\begin{lemma}\label{cube_decomp_lemma} Let $A$ and $B$ be two $n \times n \times n$ ASHMs. Then $A-B$ can be expressed as a sum of T-blocks.\end{lemma}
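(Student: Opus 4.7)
The plan is to encode each hypermatrix by its corner sums and then exploit the fact that the corner sum of an ASHM has forced boundary values. For any $n \times n \times n$ hypermatrix $H = [h_{ijk}]$, define
$$C_H(i,j,k) = \sum_{i' \le i,\, j' \le j,\, k' \le k} h_{i'j'k'} \qquad \text{on } \{0,1,\dots,n\}^3,$$
with the convention $C_H \equiv 0$ whenever any index is $0$. The map $H \mapsto C_H$ is a $\mathbb{Z}$-linear bijection onto integer-valued functions vanishing on this lower boundary, so decomposing $A-B$ as a sum of T-blocks is equivalent to decomposing $C_A - C_B$ as the corresponding sum of T-block corner sums.

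First I would pin down the boundary values of $C_A$. Because every row line, column line, and vertical line of an ASHM is an alternating $\pm 1$ sequence that begins and ends with $+1$, each such line sums to $1$. Summing in the remaining two coordinates then gives $C_A(n,j,k) = jk$, $C_A(i,n,k) = ik$, and $C_A(i,j,n) = ij$, independent of the choice of ASHM $A$. Hence $C_A - C_B$ vanishes on the entire boundary of $\{0,\dots,n\}^3$, so it is supported on $\{1,\dots,n-1\}^3$. Next, a short inclusion--exclusion over the eight signed corners of a T-block shows
$$C_{T_{i_1,j_1,k_1:\,i_2,j_2,k_2}}(i,j,k) = \mathbf{1}\bigl[i_1 \le i < i_2,\ j_1 \le j < j_2,\ k_1 \le k < k_2\bigr],$$
and in particular the unit T-block $T_{i,j,k:\,i+1,j+1,k+1}$ has corner sum equal to the single-point indicator $\mathbf{1}_{\{(i,j,k)\}}$.

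The conclusion is then immediate: every integer-valued function on $\{1,\dots,n-1\}^3$ is trivially an integer combination of single-point indicators, so transporting the decomposition of $C_A - C_B$ back through the bijection $H \leftrightarrow C_H$ yields
$$A - B = \sum_{(i,j,k)\,\in\,\{1,\dots,n-1\}^3} \bigl(C_A(i,j,k) - C_B(i,j,k)\bigr)\, T_{i,j,k:\,i+1,j+1,k+1},$$
an integer combination of T-blocks, with the sign of a negative coefficient absorbed into the $\pm$ that the T-block definition explicitly permits. I do not foresee a serious obstacle: the only substantive content is the boundary-value computation for $C_A$, which is a direct consequence of the sign-alternation property of ASHMs. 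A more combinatorial alternative would be induction on the number of non-zero entries of $A - B$, each step locating a single T-block to subtract, but in three dimensions this requires simultaneously matching corners in row, column, and vertical lines, which is notationally heavier than the height-function argument above.
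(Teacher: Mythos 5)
Your proof is correct, and it takes a genuinely different route from the paper's. The paper argues by a greedy iteration: since every line sum of $D=A-B$ is $0$, one can always locate a positive entry in the lowest non-zero plane together with negative partners in its row, column and vertical line, subtract the corresponding T-block, and observe that the absolute-value weight of that plane strictly decreases, so the process terminates. Your corner-sum argument replaces this with a single algebraic identification: the map $H\mapsto C_H$ is a $\mathbb{Z}$-linear bijection, the line-sum-equal-to-$1$ property of ASHMs forces $C_A$ and $C_B$ to agree on the faces $i=n$, $j=n$, $k=n$ (with the stated values $jk$, $ik$, $ij$), and a T-block factors as a triple tensor product of differences of standard basis vectors, so its corner sum is the indicator of a box and in particular the unit T-block has corner sum a single-point indicator. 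All the steps check out, including the reading of ``sum of T-blocks'' as an integer combination, which the paper's own usage (the $\pm$ in the visualisation and decompositions such as $D = T + T - T$ in Example \ref{uneven_distance}) clearly sanctions. Your version buys several things the paper's does not: it is canonical rather than dependent on greedy choices, it proves the strictly stronger statement that only the \emph{unit} T-blocks $T_{i,j,k:\,i+1,j+1,k+1}$ are needed, and it makes transparent that the decomposable hypermatrices are exactly those with all line sums zero. What the paper's iteration buys in exchange is a decomposition whose T-blocks sit on actual non-zero entries of $D$ and whose planes are consumed bottom-up, a format that the proofs of Theorems \ref{ASHL_difference} and \ref{T-block_pairs} later reuse almost verbatim; your decomposition would serve there too, since those results quantify over arbitrary T-block expressions, but the iterative template is the one the rest of the section is built on.
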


\begin{proof}
If $A = B$, this is trivially true.

Assume $A \not = B$. In $A$ and $B$, the sum of the entries in any row, column, or vertical line is $1$. Therefore the sum of any row, column, or vertical line of $D = A-B$ is $0$. Now iterate the following step.
\begin{itemize}
\item Let $k_1$ be the least integer for which the plane $P_{k_1}(D)$ contains non-zero entries. For some positive entry $d_{i_1j_1k_1}$ of $D$, we can find entries $d_{i_2j_1k_1}$, $d_{i_1j_2k_1}$ and $d_{i_1j_1k_2}$ with negative sign, where $k_2 > k_1$. Let $D \leftarrow D - T_{i_1,j_1,k_1:\,i_2,j_2,k_2}$ and repeat this step if $D$ is not a $0$-hypermatrix.
\end{itemize}
Note that the sum of the absolute values of the entries of $P_{k_1}(D)$ is at least $2$ less than the previous step, and that all line sums of $D$ are $0$ after each step. We can therefore run this iterative process repeatedly, resulting in $P_{k_1}(D)$ becoming a 0-matrix for the current value of $k_1$, and eventually resulting in $D$ becoming a 0-hypermatrix. Therefore $A-B$ can be expressed as a sum of T-blocks.
\end{proof}

\begin{definition} Let $T$ be a T-block. The \emph{depth} of $T$, $d(T)$, is defined as follows.
\[ d(T) =  \begin{cases} 
      k_2 - k_1\text{,} & T = T_{i_1,j_1,k_1:\,i_2,j_2,k_2} \\
      k_1 - k_2\text{,} & T = -T_{i_1,j_1,k_1:\,i_2,j_2,k_2}
   \end{cases}
\]
Two T-blocks, $T_1$ and $T_2$, have \emph{opposite depth} if $d(T_1) = -d(T_2)$
\end{definition}

\begin{theorem}\label{ASHL_difference} Two ASHMs $A$ and $B$ satisfy $L(A) = L(B)$ if and only if any expression of $A-B$ as a sum of T-blocks satisfies that, in any vertical line $V$ of $A-B$,
\[\sum_{T \in T_V} d(T) = 0\text{,}\]
where $T_V$ is the subset of these T-blocks with non-zero entries in $V$.
\end{theorem}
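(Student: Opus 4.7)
The starting observation is that $L(A)_{ij} = \sum_k k\,a_{ijk}$, so $L(A) = L(B)$ is equivalent to the statement that for every $(i,j)$ the weighted vertical-line sum $\sum_k k\,d_{ijk}$ vanishes in $D = A - B$. My plan is to take a T-block decomposition of $D$ guaranteed by Lemma~\ref{cube_decomp_lemma} and translate this linear condition on $D$ into a condition on the depths of the T-blocks touching each vertical line.

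The key computation is the contribution of a single T-block to a vertical weighted sum. A T-block $T_{i_1,j_1,k_1:\,i_2,j_2,k_2}$ has non-zero entries in exactly the four vertical lines indexed by $(i_\alpha, j_\beta)$ with $\alpha,\beta\in\{1,2\}$, and in each such line there are precisely two non-zero entries of opposite sign, sitting in planes $k_1$ and $k_2$. A direct check gives $\sum_k k\,t_{ijk} = \pm(k_2 - k_1) = \pm d(T)$ in each of these four lines, with the sign determined by whether the line sits at a ``$+,-$'' corner or a ``$-,+$'' corner of the top plane of $T$; for the negated block $-T$ the signs flip consistently with $d(-T) = -d(T)$, so every signed T-block contributes a quantity of magnitude $|d(T)|$ to each of its four incident vertical lines.

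Summing these per-block contributions over the whole decomposition, the weighted vertical-line sum of $D$ along $V$ becomes $\sum_{T \in T_V} d(T)$, where the sign convention implicit in $d(T)$ is aligned with the corner of $T$ meeting $V$. Setting this to zero for every $V$ is then literally the condition $L(A) = L(B)$, and the equivalence follows immediately in both directions by rewriting $\sum_k k\,d_{ijk} = 0$ in the T-block basis.

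The main obstacle I anticipate is purely the sign bookkeeping: each T-block touches four vertical lines and its contribution carries a sign that depends on which of its corners sits in the line, so some care is needed to match the signed depth against the corner sign so that the condition reads as a clean sum $\sum_{T \in T_V} d(T) = 0$ rather than a messier signed combination over corners. Once that convention is pinned down, the remainder of the proof is a one-line linearity argument driven by the single-T-block calculation above.
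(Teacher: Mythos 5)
Your argument is essentially identical to the paper's: both reduce $L(A)=L(B)$ to the vanishing of the weighted sums $\sum_k k\,d_{ijk}$ along each vertical line of $D=A-B$, and both observe that a T-block contributes $\pm(k_2-k_1)$ to each of the four vertical lines it meets, so the total along $V$ is $\sum_{T\in T_V}d(T)$. The sign bookkeeping you flag as the remaining obstacle is the same subtlety the paper encounters, where it is absorbed without further comment in the step $\sum_{T\in T_V}\pm(k_1-k_2)=\sum_{T\in T_V}d(T)$.
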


\begin{proof}
From Lemma \ref{cube_decomp_lemma}, we know that $A - B$ can be expressed as a sum of T-blocks. The entries of any vertical line $V$ in $A-B$ can be decomposed into pairs $(t_{k_1},t_{k_2})$, where $k_1 < k_2$, such that pairs are non-zero entries in the same T-block. Note that $t_{k_1} = -t_{k_2}$. Therefore 
\[\sum kV_k = \sum_{T \in T_V} k_1t_{k_1} + k_2t_{k_2} = \sum_{T \in T_V} \pm (k_1 - k_2) = \sum_{T \in T_V} d(T)\text{.}\]
\begin{itemize}
\item Suppose that, for any vertical line $V$ in $A - B$, 
\[\sum_{T \in T_V} d(T) = 0\text{.}\]
This means that $\sum kV_k = 0$, which means that $L(A-B) = 0$. Therefore $L(A) = L(B)$.

\item Now suppose that $L(A) = L(B)$. Then $L(A-B) = 0$, so for any vertical line $V$ in $A-B$, we have $\sum kV_k = 0$. Therefore \[\sum_{T \in T_V} d(T) = 0\text{.}\]
\end{itemize}
\end{proof}

This provides some progress on Problem \ref{problem1}, as an ASHM $B$ is contained in $A_n(L)$ if and only $A$ and $B$ satisfy Theorem \ref{ASHL_difference} for all $A \in A_n(L)$. In particular, Theorem \ref{ASHL_difference} provides a strategy for constructing an ASHM $B$ for which $L(B) = L(A)$ for some given ASHM $A$, by adding T-blocks to $A$ in such a way that satisfies $\sum_{T \in T_V} d(T) = 0$. This is by far our most successful method for generating pairs of ASHMs with the same corresponding ASHL. The relationship between elements of $A_n(L)$ can be characterised further, as shown in the following two examples.

\begin{example}\label{uneven_distance} Here, $A$ and $B$ are two ASHMs for which $D=A-B$ has a very natural decomposition as the sum of three T-blocks with depths $1, 1, -2$, respectively, occupying the same vertical lines.
\[ A = 
\SmallMatrix{
      &  & \textbf{+} &  &  &  &  &  \\
     \textbf{+} &  &  &  &  &  &  &  \\
      & \textbf{+} &  &  &  &  &  &  \\
      &  &  & \textbf{+} &  &  &  &  \\
      &  &  &  & \textbf{+} &  &  &  \\
      &  &  &  &  &  & \textbf{+} &  \\
      &  &  &  &  &  &  & \textbf{+} \\
      &  &  &  &  & \textbf{+} &  & }
\hspace{-0.2cm}\nearrow\hspace{-0.2cm}\SmallMatrix{
      & \textbf{+} &  &  &  &  &  &  \\
      &  & \textbf{+} &  &  &  &  &  \\
     \textbf{+} &  &  &  &  &  &  &  \\
      &  &  &  & \textbf{+} &  &  &  \\
      &  &  & \textbf{+} &  &  &  &  \\
      &  &  &  &  &  &  & \textbf{+} \\
      &  &  &  &  & \textbf{+} &  &  \\
      &  &  &  &  &  & \textbf{+} & }
\hspace{-0.2cm}\nearrow\hspace{-0.2cm}\SmallMatrix{
      &  &  &  &  &  &  & \textbf{+} \\
      &  &  &  & \textbf{+} &  &  &  \\
      &  &  & \textbf{+} &  &  &  &  \\
      &  & \textbf{+} &  & \textbf{-} &  & \textbf{+} &  \\
      & \textbf{+} &  & \textbf{-} &  & \textbf{+} &  &  \\
      &  &  &  & \textbf{+} &  &  &  \\
      &  &  & \textbf{+} &  &  &  &  \\
     \textbf{+} &  &  &  &  &  &  & }
\hspace{-0.2cm}\nearrow\hspace{-0.2cm}\SmallMatrix{
      &  &  &  &  & \textbf{+} &  &  \\
      &  &  &  &  &  &  & \textbf{+} \\
      &  &  &  &  &  & \textbf{+} &  \\
      &  &  &  & \textbf{+} &  &  &  \\
      &  &  & \textbf{+} &  &  &  &  \\
     \textbf{+} &  &  &  &  &  &  &  \\
      &  & \textbf{+} &  &  &  &  &  \\
      & \textbf{+} &  &  &  &  &  & }
\hspace{-0.2cm}\nearrow\hspace{-0.2cm}\SmallMatrix{
      &  &  & \textbf{+} &  &  &  &  \\
      & \textbf{+} &  &  &  &  &  &  \\
      &  & \textbf{+} &  &  &  &  &  \\
     \textbf{+} &  &  &  &  &  &  &  \\
      &  &  &  &  &  &  & \textbf{+} \\
      &  &  &  &  & \textbf{+} &  &  \\
      &  &  &  &  &  & \textbf{+} &  \\
      &  &  &  & \textbf{+} &  &  & }
\hspace{-0.2cm}\nearrow\hspace{-0.2cm}\SmallMatrix{
     \textbf{+} &  &  &  &  &  &  &  \\
      &  &  & \textbf{+} &  &  &  &  \\
      &  &  &  & \textbf{+} &  &  &  \\
      & \textbf{+} &  & \textbf{-} &  & \textbf{+} &  &  \\
      &  & \textbf{+} &  & \textbf{-} &  & \textbf{+} &  \\
      &  &  & \textbf{+} &  &  &  &  \\
      &  &  &  & \textbf{+} &  &  &  \\
      &  &  &  &  &  &  & \textbf{+}}
\hspace{-0.2cm}\nearrow\hspace{-0.2cm}\SmallMatrix{
      &  &  &  & \textbf{+} &  &  &  \\
      &  &  &  &  &  & \textbf{+} &  \\
      &  &  &  &  & \textbf{+} &  &  \\
      &  &  &  &  &  &  & \textbf{+} \\
     \textbf{+} &  &  &  &  &  &  &  \\
      &  & \textbf{+} &  &  &  &  &  \\
      & \textbf{+} &  &  &  &  &  &  \\
      &  &  & \textbf{+} &  &  &  & }
\hspace{-0.2cm}\nearrow\hspace{-0.2cm}\SmallMatrix{
      &  &  &  &  &  & \textbf{+} &  \\
      &  &  &  &  & \textbf{+} &  &  \\
      &  &  &  &  &  &  & \textbf{+} \\
      &  &  & \textbf{+} &  &  &  &  \\
      &  &  &  & \textbf{+} &  &  &  \\
      & \textbf{+} &  &  &  &  &  &  \\
     \textbf{+} &  &  &  &  &  &  &  \\
      &  & \textbf{+} &  &  &  &  & }\]

%%%%%%%%%%%%%%%%%%%%%%

\[ B = 
\SmallMatrix{
      &  & \textbf{+} &  &  &  &  &  \\
     \textbf{+} &  &  &  &  &  &  &  \\
      & \textbf{+} &  &  &  &  &  &  \\
      &  &  &  & \textbf{+} &  &  &  \\
      &  &  & \textbf{+} &  &  &  &  \\
      &  &  &  &  &  & \textbf{+} &  \\
      &  &  &  &  &  &  & \textbf{+} \\
      &  &  &  &  & \textbf{+} &  & }
\hspace{-0.2cm}\nearrow\hspace{-0.2cm}\SmallMatrix{
      & \textbf{+} &  &  &  &  &  &  \\
      &  & \textbf{+} &  &  &  &  &  \\
     \textbf{+} &  &  &  &  &  &  &  \\
      &  &  & \textbf{+} &  &  &  &  \\
      &  &  &  & \textbf{+} &  &  &  \\
      &  &  &  &  &  &  & \textbf{+} \\
      &  &  &  &  & \textbf{+} &  &  \\
      &  &  &  &  &  & \textbf{+} & }
\hspace{-0.2cm}\nearrow\hspace{-0.2cm}\SmallMatrix{
      &  &  &  &  &  &  & \textbf{+} \\
      &  &  &  & \textbf{+} &  &  &  \\
      &  &  & \textbf{+} &  &  &  &  \\
      &  & \textbf{+} & \textbf{-} &  &  & \textbf{+} &  \\
      & \textbf{+} &  &  & \textbf{-} & \textbf{+} &  &  \\
      &  &  &  & \textbf{+} &  &  &  \\
      &  &  & \textbf{+} &  &  &  &  \\
     \textbf{+} &  &  &  &  &  &  & }
\hspace{-0.2cm}\nearrow\hspace{-0.2cm}\SmallMatrix{
      &  &  &  &  & \textbf{+} &  &  \\
      &  &  &  &  &  &  & \textbf{+} \\
      &  &  &  &  &  & \textbf{+} &  \\
      &  &  & \textbf{+} &  &  &  &  \\
      &  &  &  & \textbf{+} &  &  &  \\
     \textbf{+} &  &  &  &  &  &  &  \\
      &  & \textbf{+} &  &  &  &  &  \\
      & \textbf{+} &  &  &  &  &  & }
\hspace{-0.2cm}\nearrow\hspace{-0.2cm}\SmallMatrix{
      &  &  & \textbf{+} &  &  &  &  \\
      & \textbf{+} &  &  &  &  &  &  \\
      &  & \textbf{+} &  &  &  &  &  \\
     \textbf{+} &  &  &  &  &  &  &  \\
      &  &  &  &  &  &  & \textbf{+} \\
      &  &  &  &  & \textbf{+} &  &  \\
      &  &  &  &  &  & \textbf{+} &  \\
      &  &  &  & \textbf{+} &  &  & }
\hspace{-0.2cm}\nearrow\hspace{-0.2cm}\SmallMatrix{
     \textbf{+} &  &  &  &  &  &  &  \\
      &  &  & \textbf{+} &  &  &  &  \\
      &  &  &  & \textbf{+} &  &  &  \\
      & \textbf{+} &  &  & \textbf{-} & \textbf{+} &  &  \\
      &  & \textbf{+} & \textbf{-} &  &  & \textbf{+} &  \\
      &  &  & \textbf{+} &  &  &  &  \\
      &  &  &  & \textbf{+} &  &  &  \\
      &  &  &  &  &  &  & \textbf{+}}
\hspace{-0.2cm}\nearrow\hspace{-0.2cm}\SmallMatrix{
      &  &  &  & \textbf{+} &  &  &  \\
      &  &  &  &  &  & \textbf{+} &  \\
      &  &  &  &  & \textbf{+} &  &  \\
      &  &  &  &  &  &  & \textbf{+} \\
     \textbf{+} &  &  &  &  &  &  &  \\
      &  & \textbf{+} &  &  &  &  &  \\
      & \textbf{+} &  &  &  &  &  &  \\
      &  &  & \textbf{+} &  &  &  & }
\hspace{-0.2cm}\nearrow\hspace{-0.2cm}\SmallMatrix{
      &  &  &  &  &  & \textbf{+} &  \\
      &  &  &  &  & \textbf{+} &  &  \\
      &  &  &  &  &  &  & \textbf{+} \\
      &  &  &  & \textbf{+} &  &  &  \\
      &  &  & \textbf{+} &  &  &  &  \\
      & \textbf{+} &  &  &  &  &  &  \\
     \textbf{+} &  &  &  &  &  &  &  \\
      &  & \textbf{+} &  &  &  &  & }\]

%%%%%%%%%%%%%%%%%%%%%%

\[ D = 
\SmallMatrix{
      \color{white}\textbf{+} &  &  &  &  &  &  &  \\
      & \color{white}\textbf{+} &  &  &  &  &  &  \\
      &  & \color{white}\textbf{+} &  &  &  &  &  \\
      \color{black!70}\cdot& \color{black!70}\cdot & \color{black!70}\cdot & \textbf{+} & \textbf{-} & \color{black!70}\cdot & \color{black!70}\cdot & \color{black!70}\cdot \\
      \color{black!70}\cdot& \color{black!70}\cdot & \color{black!70}\cdot & \textbf{-} & \textbf{+} & \color{black!70}\cdot & \color{black!70}\cdot & \color{black!70}\cdot \\
      &  &  &  &  & \color{white}\textbf{+} &  &  \\
      &  &  &  &  &  & \color{white}\textbf{+} &  \\
      &  &  &  &  &  &  &\color{white}\textbf{+} }
\hspace{-0.2cm}\nearrow\hspace{-0.2cm}\SmallMatrix{
     \color{white}\textbf{+} &  &  &  &  &  &  &  \\
      & \color{white}\textbf{+} &  &  &  &  &  &  \\
      &  & \color{white}\textbf{+} &  &  &  &  &  \\
      \color{black!70}\cdot& \color{black!70}\cdot &\color{black!70}\cdot  & \textbf{-} & \textbf{+} & \color{black!70}\cdot & \color{black!70}\cdot & \color{black!70}\cdot \\
      \color{black!70}\cdot& \color{black!70}\cdot & \color{black!70}\cdot & \textbf{+} & \textbf{-} & \color{black!70}\cdot & \color{black!70}\cdot & \color{black!70}\cdot \\
      &  &  &  &  &\color{white}\textbf{+}  &  &  \\
      &  &  &  &  &  &\color{white}\textbf{+} &  \\
      &  &  &  &  &  &  & \color{white}\textbf{+}}
\hspace{-0.2cm}\nearrow\hspace{-0.2cm}\SmallMatrix{
     \color{white}\textbf{+} &  &  &  &  &  &  &  \\
      & \color{white}\textbf{+} &  &  &  &  &  &  \\
      &  & \color{white}\textbf{+} &  &  &  &  &  \\
      \color{black!70}\cdot& \color{black!70}\cdot & \color{black!70}\cdot & \textbf{+} & \textbf{-} & \color{black!70}\cdot & \color{black!70}\cdot & \color{black!70}\cdot \\
      \color{black!70}\cdot& \color{black!70}\cdot & \color{black!70}\cdot & \textbf{-} & \textbf{+} & \color{black!70}\cdot & \color{black!70}\cdot & \color{black!70}\cdot \\
      &  &  &  &  & \color{white}\textbf{+} &  &  \\
      &  &  &  &  &  & \color{white}\textbf{+} &  \\
      &  &  &  &  &  &  &\color{white}\textbf{+} }
\hspace{-0.2cm}\nearrow\hspace{-0.2cm}\SmallMatrix{
     \color{white}\textbf{+} &  &  &  &  &  &  &  \\
      & \color{white}\textbf{+} &  &  &  &  &  &  \\
      &  & \color{white}\textbf{+} &  &  &  &  &  \\
      &  &  & \textbf{-} & \textbf{+} &  &  &  \\
      &  &  & \textbf{+} & \textbf{-} &  &  &  \\
      &  &  &  &  & \color{white}\textbf{+} &  &  \\
      &  &  &  &  &  &\color{white}\textbf{+}  &  \\
      &  &  &  &  &  &  & \color{white}\textbf{+}}
\hspace{-0.2cm}\nearrow\hspace{-0.2cm}\SmallMatrix{
      \color{white}\textbf{+}&  &  &  &  &  &  &  \\
      & \color{white}\textbf{+} &  &  &  &  &  &  \\
      &  &\color{white}\textbf{+}  &  &  &  &  &  \\
      &  &  & \color{white}\textbf{+} &  &  &  &  \\
      &  &  &  &\color{white}\textbf{+} &  &  &  \\
      &  &  &  &  & \color{white}\textbf{+}&  &  \\
      &  &  &  &  &  &\color{white}\textbf{+}  &  \\
      &  &  &  &  &  &  & \color{white}\textbf{+}}
\hspace{-0.2cm}\nearrow\hspace{-0.2cm}\SmallMatrix{
      \color{white}\textbf{+}&  &  &  &  &  &  &  \\
      & \color{white}\textbf{+} &  &  &  &  &  &  \\
      &  &\color{white}\textbf{+}  &  &  &  &  &  \\
      \color{black!70}\cdot& \color{black!70}\cdot &\color{black!70}\cdot  & \textbf{-} & \textbf{+} & \color{black!70}\cdot & \color{black!70}\cdot & \color{black!70}\cdot \\
      \color{black!70}\cdot& \color{black!70}\cdot & \color{black!70}\cdot & \textbf{+} & \textbf{-} & \color{black!70}\cdot & \color{black!70}\cdot & \color{black!70}\cdot \\
      &  &  &  &  & \color{white}\textbf{+} &  &  \\
      &  &  &  &  &  &\color{white}\textbf{+}  &  \\
      &  &  &  &  &  &  &\color{white}\textbf{+} }
\hspace{-0.2cm}\nearrow\hspace{-0.2cm}\SmallMatrix{
     \color{white}\textbf{+} &  &  &  &  &  &  &  \\
      & \color{white}\textbf{+} &  &  &  &  &  &  \\
      &  &\color{white}\textbf{+}  &  &  &  &  &  \\
      &  &  & \color{white}\textbf{+} &  &  &  &  \\
      &  &  &  & \color{white}\textbf{+} &  &  &  \\
      &  &  &  &  & \color{white}\textbf{+} &  &  \\
      &  &  &  &  &  & \color{white}\textbf{+} &  \\
      &  &  &  &  &  &  &\color{white}\textbf{+} }
\hspace{-0.2cm}\nearrow\hspace{-0.2cm}\SmallMatrix{
     \color{white}\textbf{+} &  &  &  &  &  &  &  \\
      &  \color{white}\textbf{+}&  &  &  &  &  &  \\
      &  & \color{white}\textbf{+} &  &  &  &  &  \\
      \color{black!70}\cdot& \color{black!70}\cdot & \color{black!70}\cdot & \textbf{+} & \textbf{-} & \color{black!70}\cdot & \color{black!70}\cdot & \color{black!70}\cdot \\
      \color{black!70}\cdot& \color{black!70}\cdot & \color{black!70}\cdot & \textbf{-} & \textbf{+} & \color{black!70}\cdot & \color{black!70}\cdot & \color{black!70}\cdot \\
      &  &  &  &  & \color{white}\textbf{+} &  &  \\
      &  &  &  &  &  & \color{white}\textbf{+} &  \\
      &  &  &  &  &  &  &\color{white}\textbf{+} }\]

\[D = T_{4,4,1:\,5,5,2} + T_{4,4,3:\,5,5,4} - T_{4,4,6:\,5,5,8}\]

\[L(A) = L(B) = 
\Matrix{
    6 & 2 & 1 & 5 & 7 & 4 & 8 & 3 \\
    1 & 5 & 2 & 6 & 3 & 8 & 7 & 4 \\
    2 & 1 & 5 & 3 & 6 & 7 & 4 & 8 \\
    5 & 6 & 3 & 3 & 3 & 6 & 3 & 7 \\
    7 & 3 & 6 & 3 & 3 & 3 & 6 & 5 \\
    4 & 8 & 7 & 6 & 3 & 5 & 1 & 2 \\
    8 & 7 & 4 & 3 & 6 & 2 & 5 & 1 \\
    3 & 4 & 8 & 7 & 5 & 1 & 2 & 6}\]

$D$ can also be expressed as the sum of pairs of T-blocks with opposite depth occupying the same vertical lines.

\[D = (T_{4,4,1:\,5,5,2} - T_{4,4,7:\,5,5,8}) + (T_{4,4,3:\,5,5,4} - T_{4,4,6:\,5,5,7})\]

\end{example}

%%%%%%%%%%%%%%%%%%%%%%%%%%%%%%%%%%%%%%%%%%%%%%%%%

\begin{example}\label{interlaced}
Here, $A$ and $B$ are two ASHMs for which $D=A-B$ has a very natural decomposition as the sum of three T-blocks such that each pair occupy exactly two of the same vertical lines.
\[\hspace{-0.4cm}A = 
\SmallMatrix{
     \textbf{+} &  &  &  &  &  &  &  &  \\
      & \textbf{+} &  &  &  &  &  &  &  \\
      &  & \textbf{+} &  &  &  &  &  &  \\
      &  &  &  & \textbf{+} &  &  &  &  \\
      &  &  & \textbf{+} &  &  &  &  &  \\
      &  &  &  &  & \textbf{+} &  &  &  \\
      &  &  &  &  &  & \textbf{+} &  &  \\
      &  &  &  &  &  &  & \textbf{+} &  \\
      &  &  &  &  &  &  &  & \textbf{+}}
\hspace{-0.2cm}\nearrow\hspace{-0.2cm}\SmallMatrix{
      & \textbf{+} &  &  &  &  &  &  &  \\
      &  & \textbf{+} &  &  &  &  &  &  \\
     \textbf{+} &  &  &  &  &  &  &  &  \\
      &  &  &  &  & \textbf{+} &  &  &  \\
      &  &  &  & \textbf{+} &  &  &  &  \\
      &  &  & \textbf{+} &  &  &  &  &  \\
      &  &  &  &  &  &  & \textbf{+} &  \\
      &  &  &  &  &  &  &  & \textbf{+} \\
      &  &  &  &  &  & \textbf{+} &  & }
\hspace{-0.2cm}\nearrow\hspace{-0.2cm}\SmallMatrix{
      &  & \textbf{+} &  &  &  &  &  &  \\
     \textbf{+} &  &  &  &  &  &  &  &  \\
      & \textbf{+} &  &  &  &  &  &  &  \\
      &  &  & \textbf{+} &  &  &  &  &  \\
      &  &  &  &  & \textbf{+} &  &  &  \\
      &  &  &  & \textbf{+} &  &  &  &  \\
      &  &  &  &  &  &  &  & \textbf{+} \\
      &  &  &  &  &  & \textbf{+} &  &  \\
      &  &  &  &  &  &  & \textbf{+} & }
\hspace{-0.2cm}\nearrow\hspace{-0.2cm}\SmallMatrix{
      &  &  & \textbf{+} &  &  &  &  &  \\
      &  &  &  & \textbf{+} &  &  &  &  \\
      &  &  &  &  & \textbf{+} &  &  &  \\
     \textbf{+} &  &  &  & \textbf{-} &  & \textbf{+} &  &  \\
      & \textbf{+} &  & \textbf{-} &  &  &  & \textbf{+} &  \\
      &  & \textbf{+} &  &  & \textbf{-} &  &  & \textbf{+} \\
      &  &  & \textbf{+} &  &  &  &  &  \\
      &  &  &  & \textbf{+} &  &  &  &  \\
      &  &  &  &  & \textbf{+} &  &  & }
\hspace{-0.2cm}\nearrow\hspace{-0.2cm}\SmallMatrix{
      &  &  &  & \textbf{+} &  &  &  &  \\
      &  &  &  &  & \textbf{+} &  &  &  \\
      &  &  & \textbf{+} &  &  &  &  &  \\
      & \textbf{+} &  &  &  & \textbf{-} &  & \textbf{+} &  \\
      &  & \textbf{+} &  & \textbf{-} &  &  &  & \textbf{+} \\
     \textbf{+} &  &  & \textbf{-} &  &  & \textbf{+} &  &  \\
      &  &  &  & \textbf{+} &  &  &  &  \\
      &  &  &  &  & \textbf{+} &  &  &  \\
      &  &  & \textbf{+} &  &  &  &  & }
\hspace{-0.2cm}\nearrow\hspace{-0.2cm}\SmallMatrix{
      &  &  &  &  & \textbf{+} &  &  &  \\
      &  &  & \textbf{+} &  &  &  &  &  \\
      &  &  &  & \textbf{+} &  &  &  &  \\
      &  & \textbf{+} & \textbf{-} &  &  &  &  & \textbf{+} \\
     \textbf{+} &  &  &  &  & \textbf{-} & \textbf{+} &  &  \\
      & \textbf{+} &  &  & \textbf{-} &  &  & \textbf{+} &  \\
      &  &  &  &  & \textbf{+} &  &  &  \\
      &  &  & \textbf{+} &  &  &  &  &  \\
      &  &  &  & \textbf{+} &  &  &  & }
\hspace{-0.2cm}\nearrow\hspace{-0.2cm}\SmallMatrix{
      &  &  &  &  &  & \textbf{+} &  &  \\
      &  &  &  &  &  &  & \textbf{+} &  \\
      &  &  &  &  &  &  &  & \textbf{+} \\
      &  &  &  &  & \textbf{+} &  &  &  \\
      &  &  & \textbf{+} &  &  &  &  &  \\
      &  &  &  & \textbf{+} &  &  &  &  \\
     \textbf{+} &  &  &  &  &  &  &  &  \\
      & \textbf{+} &  &  &  &  &  &  &  \\
      &  & \textbf{+} &  &  &  &  &  & }
\hspace{-0.2cm}\nearrow\hspace{-0.2cm}\SmallMatrix{
      &  &  &  &  &  &  & \textbf{+} &  \\
      &  &  &  &  &  &  &  & \textbf{+} \\
      &  &  &  &  &  & \textbf{+} &  &  \\
      &  &  &  & \textbf{+} &  &  &  &  \\
      &  &  &  &  & \textbf{+} &  &  &  \\
      &  &  & \textbf{+} &  &  &  &  &  \\
      & \textbf{+} &  &  &  &  &  &  &  \\
      &  & \textbf{+} &  &  &  &  &  &  \\
     \textbf{+} &  &  &  &  &  &  &  & }
\hspace{-0.2cm}\nearrow\hspace{-0.2cm}\SmallMatrix{
      &  &  &  &  &  &  &  & \textbf{+} \\
      &  &  &  &  &  & \textbf{+} &  &  \\
      &  &  &  &  &  &  & \textbf{+} &  \\
      &  &  & \textbf{+} &  &  &  &  &  \\
      &  &  &  & \textbf{+} &  &  &  &  \\
      &  &  &  &  & \textbf{+} &  &  &  \\
      &  & \textbf{+} &  &  &  &  &  &  \\
     \textbf{+} &  &  &  &  &  &  &  &  \\
      & \textbf{+} &  &  &  &  &  &  & }\]

%%%%%%%%%%%%%%%%%%%%%%

\[\hspace{-0.4cm}B = 
\SmallMatrix{
     \textbf{+} &  &  &  &  &  &  &  &  \\
      & \textbf{+} &  &  &  &  &  &  &  \\
      &  & \textbf{+} &  &  &  &  &  &  \\
      &  &  & \textbf{+} &  &  &  &  &  \\
      &  &  &  & \textbf{+} &  &  &  &  \\
      &  &  &  &  & \textbf{+} &  &  &  \\
      &  &  &  &  &  & \textbf{+} &  &  \\
      &  &  &  &  &  &  & \textbf{+} &  \\
      &  &  &  &  &  &  &  & \textbf{+}}
\hspace{-0.2cm}\nearrow\hspace{-0.2cm}\SmallMatrix{
      & \textbf{+} &  &  &  &  &  &  &  \\
      &  & \textbf{+} &  &  &  &  &  &  \\
     \textbf{+} &  &  &  &  &  &  &  &  \\
      &  &  &  & \textbf{+} &  &  &  &  \\
      &  &  &  &  & \textbf{+} &  &  &  \\
      &  &  & \textbf{+} &  &  &  &  &  \\
      &  &  &  &  &  &  & \textbf{+} &  \\
      &  &  &  &  &  &  &  & \textbf{+} \\
      &  &  &  &  &  & \textbf{+} &  & }
\hspace{-0.2cm}\nearrow\hspace{-0.2cm}\SmallMatrix{
      &  & \textbf{+} &  &  &  &  &  &  \\
     \textbf{+} &  &  &  &  &  &  &  &  \\
      & \textbf{+} &  &  &  &  &  &  &  \\
      &  &  &  &  & \textbf{+} &  &  &  \\
      &  &  & \textbf{+} &  &  &  &  &  \\
      &  &  &  & \textbf{+} &  &  &  &  \\
      &  &  &  &  &  &  &  & \textbf{+} \\
      &  &  &  &  &  & \textbf{+} &  &  \\
      &  &  &  &  &  &  & \textbf{+} & }
\hspace{-0.2cm}\nearrow\hspace{-0.2cm}\SmallMatrix{
      &  &  & \textbf{+} &  &  &  &  &  \\
      &  &  &  & \textbf{+} &  &  &  &  \\
      &  &  &  &  & \textbf{+} &  &  &  \\
     \textbf{+} &  &  & \textbf{-} &  &  & \textbf{+} &  &  \\
      & \textbf{+} &  &  & \textbf{-} &  &  & \textbf{+} &  \\
      &  & \textbf{+} &  &  & \textbf{-} &  &  & \textbf{+} \\
      &  &  & \textbf{+} &  &  &  &  &  \\
      &  &  &  & \textbf{+} &  &  &  &  \\
      &  &  &  &  & \textbf{+} &  &  & }
\hspace{-0.2cm}\nearrow\hspace{-0.2cm}\SmallMatrix{
      &  &  &  & \textbf{+} &  &  &  &  \\
      &  &  &  &  & \textbf{+} &  &  &  \\
      &  &  & \textbf{+} &  &  &  &  &  \\
      & \textbf{+} &  &  & \textbf{-} &  &  & \textbf{+} &  \\
      &  & \textbf{+} &  &  & \textbf{-} &  &  & \textbf{+} \\
     \textbf{+} &  &  & \textbf{-} &  &  & \textbf{+} &  &  \\
      &  &  &  & \textbf{+} &  &  &  &  \\
      &  &  &  &  & \textbf{+} &  &  &  \\
      &  &  & \textbf{+} &  &  &  &  & }
\hspace{-0.2cm}\nearrow\hspace{-0.2cm}\SmallMatrix{
      &  &  &  &  & \textbf{+} &  &  &  \\
      &  &  & \textbf{+} &  &  &  &  &  \\
      &  &  &  & \textbf{+} &  &  &  &  \\
      &  & \textbf{+} &  &  & \textbf{-} &  &  & \textbf{+} \\
     \textbf{+} &  &  & \textbf{-} &  &  & \textbf{+} &  &  \\
      & \textbf{+} &  &  & \textbf{-} &  &  & \textbf{+} &  \\
      &  &  &  &  & \textbf{+} &  &  &  \\
      &  &  & \textbf{+} &  &  &  &  &  \\
      &  &  &  & \textbf{+} &  &  &  & }
\hspace{-0.2cm}\nearrow\hspace{-0.2cm}\SmallMatrix{
      &  &  &  &  &  & \textbf{+} &  &  \\
      &  &  &  &  &  &  & \textbf{+} &  \\
      &  &  &  &  &  &  &  & \textbf{+} \\
      &  &  &  &  & \textbf{+} &  &  &  \\
      &  &  & \textbf{+} &  &  &  &  &  \\
      &  &  &  & \textbf{+} &  &  &  &  \\
     \textbf{+} &  &  &  &  &  &  &  &  \\
      & \textbf{+} &  &  &  &  &  &  &  \\
      &  & \textbf{+} &  &  &  &  &  & }
\hspace{-0.2cm}\nearrow\hspace{-0.2cm}\SmallMatrix{
      &  &  &  &  &  &  & \textbf{+} &  \\
      &  &  &  &  &  &  &  & \textbf{+} \\
      &  &  &  &  &  & \textbf{+} &  &  \\
      &  &  &  & \textbf{+} &  &  &  &  \\
      &  &  &  &  & \textbf{+} &  &  &  \\
      &  &  & \textbf{+} &  &  &  &  &  \\
      & \textbf{+} &  &  &  &  &  &  &  \\
      &  & \textbf{+} &  &  &  &  &  &  \\
     \textbf{+} &  &  &  &  &  &  &  & }
\hspace{-0.2cm}\nearrow\hspace{-0.2cm}\SmallMatrix{
      &  &  &  &  &  &  &  & \textbf{+} \\
      &  &  &  &  &  & \textbf{+} &  &  \\
      &  &  &  &  &  &  & \textbf{+} &  \\
      &  &  & \textbf{+} &  &  &  &  &  \\
      &  &  &  & \textbf{+} &  &  &  &  \\
      &  &  &  &  & \textbf{+} &  &  &  \\
      &  & \textbf{+} &  &  &  &  &  &  \\
     \textbf{+} &  &  &  &  &  &  &  &  \\
      & \textbf{+} &  &  &  &  &  &  & }\]

%%%%%%%%%%%%%%%%%%%%%%

\[\hspace{-0.4cm}D = 
\SmallMatrix{
     \color{white}\textbf{+} &  &  &  &  &  &  &  &  \\
      & \color{white}\textbf{+} &  &  &  &  &  &  &  \\
      &  &\color{white}\textbf{+}  &  &  &  &  &  &  \\
     \color{black!70}\cdot & \color{black!70}\cdot & \color{black!70}\cdot & \textbf{-} & \textbf{+} & \color{black!70}\cdot & \color{black!70}\cdot & \color{black!70}\cdot & \color{black!70}\cdot \\
     \color{black!70}\cdot & \color{black!70}\cdot &\color{black!70}\cdot  & \textbf{+} & \textbf{-} & \color{black!70}\cdot & \color{black!70}\cdot & \color{black!70}\cdot & \color{black!70}\cdot \\
      &  &  &  &  & \color{white}\textbf{+} &  &  &  \\
      &  &  &  &  &  & \color{white}\textbf{+} &  &  \\
      &  &  &  &  &  &  &\color{white}\textbf{+}  &  \\
      &  &  &  &  &  &  &  &\color{white}\textbf{+} }
\hspace{-0.2cm}\nearrow\hspace{-0.2cm}\SmallMatrix{
     \color{white}\textbf{+} &  &  &  &  &  &  &  &  \\
      &\color{white}\textbf{+} &  &  &  &  &  &  &  \\
      &  & \color{white}\textbf{+} &  &  &  &  &  &  \\
      \color{black!70}\cdot&  \color{black!70}\cdot& \color{black!70}\cdot & \color{black!70}\cdot & \textbf{-} & \textbf{+} & \color{black!70}\cdot &\color{black!70}\cdot  &  \color{black!70}\cdot\\
     \color{black!70}\cdot & \color{black!70}\cdot & \color{black!70}\cdot &\color{black!70}\cdot  & \textbf{+} & \textbf{-} &\color{black!70}\cdot &\color{black!70}\cdot  & \color{black!70}\cdot \\
      &  &  &  &  & \color{white}\textbf{+} &  &  &  \\
      &  &  &  &  &  & \color{white}\textbf{+} &  &  \\
      &  &  &  &  &  &  & \color{white}\textbf{+} &  \\
      &  &  &  &  &  &  &  & \color{white}\textbf{+}}
\hspace{-0.2cm}\nearrow\hspace{-0.2cm}\SmallMatrix{
     \color{white}\textbf{+} &  &  &  &  &  &  &  &  \\
      & \color{white}\textbf{+} &  &  &  &  &  &  &  \\
      &  & \color{white}\textbf{+} &  &  &  &  &  &  \\
     \color{black!70}\cdot & \color{black!70}\cdot &\color{black!70}\cdot  & \textbf{+} & \color{black!70}\cdot & \textbf{-} & \color{black!70}\cdot & \color{black!70}\cdot & \color{black!70}\cdot \\
    \color{black!70}\cdot  & \color{black!70}\cdot & \color{black!70}\cdot & \textbf{-} & \color{black!70}\cdot & \textbf{+} &\color{black!70}\cdot  & \color{black!70}\cdot &  \color{black!70}\cdot\\
      &  &  &  &  & \color{white}\textbf{+} &  &  &  \\
      &  &  &  &  &  &\color{white}\textbf{+}  &  &  \\
      &  &  &  &  &  &  &\color{white}\textbf{+}  &  \\
      &  &  &  &  &  &  &  &\color{white}\textbf{+} }
\hspace{-0.2cm}\nearrow\hspace{-0.2cm}\SmallMatrix{
     \color{white}\textbf{+} &  &  &  &  &  &  &  &  \\
      & \color{white}\textbf{+} &  &  &  &  &  &  &  \\
      &  &\color{white}\textbf{+}  &  &  &  &  &  &  \\
     \color{black!70}\cdot & \color{black!70}\cdot & \color{black!70}\cdot & \textbf{+} & \textbf{-} & \color{black!70}\cdot & \color{black!70}\cdot & \color{black!70}\cdot & \color{black!70}\cdot \\
     \color{black!70}\cdot & \color{black!70}\cdot &\color{black!70}\cdot  & \textbf{-} & \textbf{+} & \color{black!70}\cdot & \color{black!70}\cdot & \color{black!70}\cdot & \color{black!70}\cdot \\
      &  &  &  &  & \color{white}\textbf{+} &  &  &  \\
      &  &  &  &  &  & \color{white}\textbf{+} &  &  \\
      &  &  &  &  &  &  &\color{white}\textbf{+}  &  \\
      &  &  &  &  &  &  &  &\color{white}\textbf{+} }
\hspace{-0.2cm}\nearrow\hspace{-0.2cm}\SmallMatrix{
     \color{white}\textbf{+} &  &  &  &  &  &  &  &  \\
      &\color{white}\textbf{+} &  &  &  &  &  &  &  \\
      &  & \color{white}\textbf{+} &  &  &  &  &  &  \\
      \color{black!70}\cdot&  \color{black!70}\cdot& \color{black!70}\cdot & \color{black!70}\cdot & \textbf{+} & \textbf{-} & \color{black!70}\cdot &\color{black!70}\cdot  &  \color{black!70}\cdot\\
     \color{black!70}\cdot & \color{black!70}\cdot & \color{black!70}\cdot &\color{black!70}\cdot  & \textbf{-} & \textbf{+} &\color{black!70}\cdot &\color{black!70}\cdot  & \color{black!70}\cdot \\
      &  &  &  &  & \color{white}\textbf{+} &  &  &  \\
      &  &  &  &  &  & \color{white}\textbf{+} &  &  \\
      &  &  &  &  &  &  & \color{white}\textbf{+} &  \\
      &  &  &  &  &  &  &  & \color{white}\textbf{+}}
\hspace{-0.2cm}\nearrow\hspace{-0.2cm}\SmallMatrix{
     \color{white}\textbf{+} &  &  &  &  &  &  &  &  \\
      & \color{white}\textbf{+} &  &  &  &  &  &  &  \\
      &  & \color{white}\textbf{+} &  &  &  &  &  &  \\
     \color{black!70}\cdot & \color{black!70}\cdot &\color{black!70}\cdot  & \textbf{-} & \color{black!70}\cdot & \textbf{+} & \color{black!70}\cdot & \color{black!70}\cdot & \color{black!70}\cdot \\
    \color{black!70}\cdot  & \color{black!70}\cdot & \color{black!70}\cdot & \textbf{+} & \color{black!70}\cdot & \textbf{-} &\color{black!70}\cdot  & \color{black!70}\cdot &  \color{black!70}\cdot\\
      &  &  &  &  & \color{white}\textbf{+} &  &  &  \\
      &  &  &  &  &  &\color{white}\textbf{+}  &  &  \\
      &  &  &  &  &  &  &\color{white}\textbf{+}  &  \\
      &  &  &  &  &  &  &  &\color{white}\textbf{+} }
\hspace{-0.2cm}\nearrow\hspace{-0.2cm}\SmallMatrix{
     \color{white}\textbf{+} &  &  &  &  &  &  &  &  \\
      & \color{white}\textbf{+} &  &  &  &  &  &  &  \\
      &  & \color{white}\textbf{+} &  &  &  &  &  &  \\
      &  &  & \color{white}\textbf{+} &  &  &  &  &  \\
      &  &  &  & \color{white}\textbf{+} &  &  &  &  \\
      &  &  &  &  & \color{white}\textbf{+} &  &  &  \\
      &  &  &  &  &  &\color{white}\textbf{+}  &  &  \\
      &  &  &  &  &  &  &  \color{white}\textbf{+}&  \\
      &  &  &  &  &  &  &  &\color{white}\textbf{+} }
\hspace{-0.2cm}\nearrow\hspace{-0.2cm}\SmallMatrix{
     \color{white}\textbf{+} &  &  &  &  &  &  &  &  \\
      & \color{white}\textbf{+} &  &  &  &  &  &  &  \\
      &  & \color{white}\textbf{+} &  &  &  &  &  &  \\
      &  &  & \color{white}\textbf{+} &  &  &  &  &  \\
      &  &  &  & \color{white}\textbf{+} &  &  &  &  \\
      &  &  &  &  & \color{white}\textbf{+} &  &  &  \\
      &  &  &  &  &  &\color{white}\textbf{+}  &  &  \\
      &  &  &  &  &  &  &  \color{white}\textbf{+}&  \\
      &  &  &  &  &  &  &  &\color{white}\textbf{+} }
\hspace{-0.2cm}\nearrow\hspace{-0.2cm}\SmallMatrix{
     \color{white}\textbf{+} &  &  &  &  &  &  &  &  \\
      & \color{white}\textbf{+} &  &  &  &  &  &  &  \\
      &  & \color{white}\textbf{+} &  &  &  &  &  &  \\
      &  &  & \color{white}\textbf{+} &  &  &  &  &  \\
      &  &  &  & \color{white}\textbf{+} &  &  &  &  \\
      &  &  &  &  & \color{white}\textbf{+} &  &  &  \\
      &  &  &  &  &  &\color{white}\textbf{+}  &  &  \\
      &  &  &  &  &  &  &  \color{white}\textbf{+}&  \\
      &  &  &  &  &  &  &  &\color{white}\textbf{+} }\]

\[D = -T_{4,4,1:\,5,5,4} - T_{4,5,2:\,5,6,5} + T_{4,4,3:\,5,6,6}\]

\[L(A) = L(B) = 
\Matrix{
    1 & 2 & 3 & 4 & 5 & 6 & 7 & 8 & 9 \\
    3 & 1 & 2 & 6 & 4 & 5 & 9 & 7 & 8 \\
    2 & 3 & 1 & 5 & 6 & 4 & 8 & 9 & 7 \\
    4 & 5 & 6 & 6 & 5 & 4 & 4 & 5 & 6 \\
    6 & 4 & 5 & 4 & 6 & 5 & 6 & 4 & 5 \\
    5 & 6 & 4 & 5 & 4 & 6 & 5 & 6 & 4 \\
    7 & 8 & 9 & 4 & 5 & 6 & 1 & 2 & 3 \\
    9 & 7 & 8 & 6 & 4 & 5 & 3 & 1 & 2 \\
    8 & 9 & 7 & 5 & 6 & 4 & 2 & 3 & 1}\]

$D$ can also be expressed as the sum of pairs of T-blocks occupying the same vertical lines with opposite depth.

\[D = (T_{4,4,3:\,5,6,6} - T_{4,4,1:\,5,6,4}) + (T_{4,5,1:\,5,6,4} - T_{4,5,2:\,5,6,5})\]

\end{example}

These examples demonstrate an alternative characterisation of two ASHMs with the same corresponding ASHL.

\begin{theorem}\label{T-block_pairs} Two ASHMs $A$ and $B$ satisfy $L(A) = L(B)$ if and only if $A-B$ can be expressed as a sum of pairs of T-blocks with opposite depth occupying the same vertical lines. \end{theorem}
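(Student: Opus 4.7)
For the ``if'' direction, suppose $A-B$ is expressed as a sum of T-block pairs with opposite depth on the same four vertical lines. Fix any vertical line $V$; each pair either contributes both of its T-blocks to $T_V$ or neither, and in the former case their depths cancel by construction. Summing over all pairs yields $\sum_{T \in T_V} d(T) = 0$, so Theorem~\ref{ASHL_difference} gives $L(A) = L(B)$.

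For the ``only if'' direction, assume $L(A) = L(B)$. By Lemma~\ref{cube_decomp_lemma} we may write $A - B$ as some sum of signed T-blocks, and by Theorem~\ref{ASHL_difference} this expression already satisfies $\sum_{T \in T_V} d(T) = 0$ on every vertical line $V$. The plan is to rewrite it as a pair decomposition by repeatedly applying two elementary T-block identities, each verified by direct inspection of the eight nonzero support positions:
\begin{align*}
T_{a,b,k_1:c,d,k_3} &= T_{a,b,k_1:c,d,k_2} + T_{a,b,k_2:c,d,k_3} \quad (k_1 < k_2 < k_3), \\
T_{a,b,k_1:c,d,k_2} &= T_{a,b,k_1:c,e,k_2} - T_{a,d,k_1:c,e,k_2} \quad (b < d < e),
\end{align*}
together with the analogous extending identity in the $i$ direction. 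The first (\emph{splitting}) rewrites a T-block as two T-blocks of smaller depth on the same four vertical lines; the second (\emph{extending}) rewrites a T-block as a signed combination of two T-blocks that agree with it on only three of its four horizontal positions, relocating it to new vertical-line sets.

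The reorganization then proceeds in two phases. First, use extensions to move T-blocks onto overlapping sets of four vertical lines; this is precisely the manoeuvre implicit in Example~\ref{interlaced}, where $-T_{4,4,1:5,5,4}$ is replaced by $-T_{4,4,1:5,6,4} + T_{4,5,1:5,6,4}$ so that the resulting T-blocks share vertical lines with $-T_{4,5,2:5,6,5}$ and $T_{4,4,3:5,6,6}$. Second, within each group of T-blocks sharing a common set of four vertical lines, apply splittings to reduce all depths to magnitude one, as implicit in Example~\ref{uneven_distance}. The depth-sum condition then forces equal numbers of $+$ and $-$ unit T-blocks in each group, which pair off into opposite-depth pairs on the same vertical lines.

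The main obstacle is establishing termination and well-definedness of this procedure: each extension introduces new T-blocks whose unpaired portions may themselves require further extensions or splittings, potentially across many vertical-line sets at once. A clean argument would set up a well-founded complexity measure---for instance, a lexicographic pair $(m,s)$ where $m$ counts the distinct vertical-line sets carrying unpaired T-blocks and $s$ is their total absolute depth---and verify that each elementary move strictly decreases it. Constructing such a measure and checking monotonicity in the presence of interacting T-block groups is, I expect, the most delicate part of the argument.
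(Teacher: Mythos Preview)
Your ``if'' direction is correct and essentially matches the paper's argument.

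Your ``only if'' direction has a genuine gap, which you yourself identify: the termination of the rewriting procedure is not established, and the sketched complexity measure $(m,s)$ is not shown to decrease. In fact the extending identity can \emph{increase} the number of distinct vertical-line quadruples in play, so monotonicity of $m$ is not clear; and even after Phase~1, you have not explained what target configuration is reached or why the depth-sum-zero condition, which holds only on individual vertical lines and not on quadruples, forces cancellation within each group. Without these details the argument is a heuristic, not a proof.

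The paper avoids this difficulty by taking a different route: rather than starting from an arbitrary T-block decomposition of $D=A-B$ and rewriting it, it works directly with the hypermatrix $D$ and greedily extracts one opposite-depth pair at a time. At each step it locates a positive entry $d_{i_1j_1k_1}$ in the lowest nonzero plane, finds neighbouring negative entries to form $T_{i_1,j_1,k_1:i_2,j_2,k_2}$, and then uses the depth-sum-zero condition on the single vertical line $V_{i_1j_1}$ to guarantee a second positive entry $d_{i_1j_1k_3}$ higher up, from which a matching T-block $T_{i_1,j_1,k_4:i_2,j_2,k_3}$ of depth $-(k_2-k_1)$ is manufactured on the same four vertical lines. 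Subtracting the pair from $D$ strictly decreases the absolute-value sum in plane $k_1$ (and one checks $k_4>k_1$ so no new nonzero entries appear below), giving an immediate termination argument. This sidesteps entirely the bookkeeping of rewriting T-block expressions that your approach requires.
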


\begin{proof}
First assume that two ASHMs $A$ and $B$ satisfy $L(A) = L(B)$. From Theorem \ref{ASHL_difference}, we know that $D = A-B$ can be decomposed into T-blocks such that in any vertical line $V$ of $D$,
\[\sum_{T \in T_V} d(T) = 0\text{.}\]

Run the following iterative step.

\begin{itemize}
\item Let $k_1$ be the least integer for which the plane $P_{k_1}(D)$ contains non-zero entries. For some positive entry $d_{i_1j_1k_1}$ of $D$, we can find entries $d_{i_2j_1k_1}$, $d_{i_1j_2k_1}$ and $d_{i_1j_1k_2}$ with negative sign, where $k_2 > k_1$. Choose $k_2$ to be the least integer satisfying this condition.

As $\sum_{T \in T_{V_{i_1j_1}}} d(T) = 0$, there must also be another positive entry $d_{i_1j_1k_3}$ in $V_{i_1j_1}$. Choose $k_3$ to be the largest integer satisfying this condition. Let $k_4 = k_3 - (k_2-k_1)$, and let $D' = D - T_{i_1,j_1,k_1:\,i_2,j_2,k_2} + T_{i_1,j_1,k_4:\,i_2,j_2,k_3}$. Now repeat this step for $D'$.
\end{itemize}

Note that the sum of the absolute value of the entries of $P_{k_1}(D')$ is at least $2$ less than that of $P_{k_1}(D)$, and that all line sums of $D'$ are $0$. Note also that $k_4 > k_1$, because the sum of the absolute value of the negative entries in $V$ must equal the sum of the positive entries in $V$ and the weighted sums of each must also equal. If $k_4 \leq k_1$, this implies that all the negative entries of $V$ are positioned above all positive entries in $V$, which means that their weighted sum is negative. Therefore $k_4 > k_1$, which means that $k_1$ remains the lowest integer for which $P_{k_1}(D)$ contains non-zero entries. We can therefore run this iterative process repeatedly, resulting in $P_{k_1}(D')$ becoming a 0-matrix and on the next iteration, $k_1$ will increase to the new lowest integer for which plane $P_{k_1}(D')$ contains non-zero entries until $D'$ is a 0-hypermatrix. Therefore $A-B$ can be expressed as a sum of pairs of T-blocks with opposite depth occupying the same vertical lines.

\*

Now, assume that $A-B$ can be expressed as a sum of pairs of T-blocks with opposite depth occupying the same vertical lines. This means that, in any vertical line $V$ of $A-B$,
\[\sum_{T \in T_V} d(T) = \sum_T d(T)-d(T) = 0\text{.}\]
Which, by Theorem \ref{ASHL_difference}, means that $L(A) = L(B)$.
\end{proof}

So, for any ASHM $A$ with ASHL $L = L(A)$, we have that $A_n(L)$ is the set of all ASHMs $B$ for which $A-B$ can be expressed as a sum of pairs of T-blocks with opposite depth occupying the same vertical lines. The following theorem tells us the smallest dimension an ASHL $L$ can have if $A_n(L)$ contains more than one element.

\begin{theorem} \label{size_thm} The minimum $n$ for which two distinct $n \times n \times n$ ASHMs $A$ and $B$ can satisfy $L(A) = L(B)$ is 4. \end{theorem}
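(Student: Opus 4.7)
The plan is to first rule out $n \leq 3$ using a line-sum argument, and then to exhibit an explicit pair of distinct $4 \times 4 \times 4$ ASHMs with the same ASHL.

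For the impossibility at $n \leq 3$: Theorem \ref{ASHL_difference} combined with the fact that every line of an ASHM sums to $1$ yields, for each vertical line $V_{ij}$ of $D = A - B$, the two linear constraints $\sum_k D_{ijk} = 0$ and $\sum_k k\,D_{ijk} = 0$. For $n = 1, 2$ these force $D \equiv 0$ immediately. For $n = 3$ they leave only the one-parameter family $(D_{ij1}, D_{ij2}, D_{ij3}) = d_{ij}(1, -2, 1)$ with $d_{ij} \in \{-1, 0, 1\}$. A non-zero $d_{ij}$ would force $|A_2[i,j] - B_2[i,j]| = 2$, so one of $A_2, B_2$ must carry a $-1$ at $(i,j)$; but the only $3 \times 3$ ASM carrying a $-1$ is the diamond, with that $-1$ at $(2,2)$. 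Hence $(i,j) = (2,2)$, and up to swapping $A$ and $B$, $A_2$ is the diamond while $B_2$ agrees with $A_2$ everywhere except $B_2[2,2] = +1$; but then row $2$ of $B_2$ is $(1, 1, 1)$, violating the ASM row-sum condition. So every $d_{ij} = 0$, giving $D = 0$ and $A = B$.

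For the construction at $n = 4$: I would take the T-block pair $D := T_{2,2,1:\,3,3,3} - T_{2,2,2:\,3,3,4}$, whose summands have depths $+2$ and $-2$ and share the four vertical lines above $\{2,3\} \times \{2,3\}$, qualifying $D$ as a valid difference by Theorem \ref{T-block_pairs}. This $D$ consists entirely of $\pm 1$ entries in the central $2 \times 2$ block of each of the four planes. To realise $D$, I would set $A_1$ equal to the identity and $A_2$ equal to the diamond $D_4$; the vertical-line alternation requirement then forces $A_3$ to have a zero central $2 \times 2$ block, and a short deduction pins down $A_3$ and $A_4$ as explicit permutation matrices. Setting $B := A - D$ modifies $A$ only in those central blocks: $B_1, B_2, B_4$ remain permutation matrices, $B_3$ acquires the other diamond pattern $D_4'$, and a direct verification confirms every plane of $B$ is an ASM and every vertical line alternates.

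The main obstacle is this last construction. Most T-block-pair differences cannot be realised as $A - B$ for valid ASHMs, because changing a central block typically breaks alternation within that plane. The critical choice is to use a depth-$2$ pair rather than adjacent depth-$1$ T-blocks: the latter would force $\pm 2$ entries in a single plane of $D$, which was precisely the obstruction that ruled out $n = 3$, whereas the depth-$2$ pair keeps all entries of $D$ in $\{-1, 0, 1\}$.
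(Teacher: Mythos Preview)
Your proof is correct and follows a genuinely different route from the paper in both halves. For $n\le 3$, the paper invokes the Brualdi--Dahl result that two ASHMs with the same ASHL must each contain a negative entry, then lists the only two $3\times 3$ ASHMs with a negative entry and checks that their ASHLs differ; your argument is instead self-contained, extracting the two linear constraints on each vertical line of $D=A-B$, reducing to the family $(d,-2d,d)$, and ruling out $d\ne 0$ via the uniqueness of the $-1$ position in a $3\times 3$ ASM. For $n=4$, the paper simply exhibits an explicit pair, whereas you engineer one by first choosing a T-block-pair difference $D$ licensed by Theorem~\ref{T-block_pairs} and then realising it; the resulting pair (with $A_1=I$, $A_2=D_4$, $A_3,A_4$ the forced permutations, and $B_3=D_4'$) is different from the paper's but checks out. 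One small imprecision worth noting: vertical-line alternation in $A$ alone does not force the central $2\times 2$ block of $A_3$ to vanish (it only gives $A_3[2,2]\in\{0,+1\}$, $A_3[2,3]\in\{0,-1\}$, etc.); what actually pins it to zero is the further requirement that $B_3=A_3-D_3$ also have entries in $\{-1,0,1\}$. Since you go on to verify $B$ directly this does not damage the argument. Your approach has the virtue of showing the T-block machinery at work constructively; the paper's is shorter but less illuminating.
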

\begin{proof}
Two ASHMs $A$ and $B$ can satisfy $L(A) = L(B)$ only if both ASHMs contain at least one negative entry \cite{ashmbib}.

\*

The following are the only two $3 \times 3$ ASHMs containing negative entries, and these do not satisfy $L(A) = L(B)$.

\[A = \Matrix{
       &  & + \\
       & + &  \\
      + &  & }
\hspace{-0.1cm}\nearrow\hspace{-0.1cm}\Matrix{
       & + &  \\
      + & - & + \\
       & + & }
\hspace{-0.1cm}\nearrow\hspace{-0.1cm}\Matrix{
      + &  &  \\
       & + &  \\
       &  & +}
\]
\[
B = \Matrix{
      + &  &  \\
       & + &  \\
       &  & +}
\hspace{-0.1cm}\nearrow\hspace{-0.1cm}\Matrix{
       & + &  \\
      + & - & + \\
       & + & }
\hspace{-0.1cm}\nearrow\hspace{-0.1cm}\Matrix{
       &  & + \\
       & + &  \\
      + &  & }\]

\[L(A) = \Matrix{
    3 & 2 & 1 \\
    2 & 2 & 2 \\
    1 & 2 & 3}
\hspace{1.5cm}
L(B) =\Matrix{
    1 & 2 & 3 \\
    2 & 2 & 2 \\
    3 & 2 & 1}\]

The following example is a pair of $4 \times 4 \times 4$ ASHMs with the same corresponding ASHL.

\[A = 
\Matrix{
       &  &  & + \\
       &  & + &  \\
       & + &  &  \\
      + &  &  & }
\nearrow\Matrix{
       &  & + &  \\
       & + & - & + \\
      + & - & + &  \\
       & + &  & }
\nearrow\Matrix{
       & + &  &  \\
      + &  &  &  \\
       &  &  & + \\
       &  & + & }
\nearrow\Matrix{
      + &  &  &  \\
       &  & + &  \\
       & + &  &  \\
       &  &  & +}\]

\[B = 
\Matrix{
       &  &  & + \\
       & + &  &  \\
       &  & + &  \\
      + &  &  & }
\nearrow\Matrix{
       &  & + &  \\
       &  &  & + \\
      + &  &  &  \\
       & + &  & }
\nearrow\Matrix{
       & + &  &  \\
      + & - & + &  \\
       & + & - & + \\
       &  & + & }
\nearrow\Matrix{
      + &  &  &  \\
       & + &  &  \\
       &  & + &  \\
       &  &  & +}\]

\[L(A) = L(B) = 
\Matrix{
    4 & 3 & 2 & 1 \\
    3 & 2 & 3 & 2 \\
    2 & 3 & 2 & 3 \\
    1 & 2 & 3 & 4}\]

Therefore the minimum dimension for which two ASHMs $A$ and $B$ can satisfy $L(A) = L(B)$ is 4.

\end{proof}

Note that $A_n(L)$ can contain ASHMs with different numbers of non-zero elements. In the following example, the number of non-zero entries in $A$ is 68, while the number of non-zero entries in $B$ is 76.

\begin{example}\label{2ashm}

\[A = 
\SmallMatrix{
     & & & & & \textbf{+} & & \\
     & & & & \textbf{+} & & & \\
     & \textbf{+} & & & & & & \\
     \textbf{+} & & & & & & & \\
     & & & & & & & \textbf{+} \\
     & & & & & & \textbf{+} & \\
     & & & \textbf{+} & & & & \\
     & & \textbf{+} & & & & & }
\hspace{-0.2cm}\nearrow
\hspace{-0.16cm}\SmallMatrix{
     & & & \textbf{+} & & & & \\
     & & \textbf{+} & & & & & \\
     & & & & & & \textbf{+} & \\
     & & & & \textbf{+} & & & \\
     & \textbf{+} & & & & & & \\
     \textbf{+} & & & & & & & \\
     & & & & & & & \textbf{+} \\
     & & & & & \textbf{+} & & }
\hspace{-0.2cm}\nearrow
\hspace{-0.16cm}\SmallMatrix{
     & & & & & & \textbf{+} & \\
     & & & & & & & \textbf{+} \\
     & & & & \textbf{+} & & & \\
     & & & & & \textbf{+} & & \\
     & & & \textbf{+} & & & & \\
     & & \textbf{+} & & & & & \\
     & \textbf{+} & & & & & & \\
     \textbf{+} & & & & & & & }
\hspace{-0.2cm}\nearrow
\hspace{-0.16cm}\SmallMatrix{
     & \textbf{+} & & & & & & \\
     & & & \textbf{+} & & & & \\
     \textbf{+} & & & & & & & \\
     & & \textbf{+} & & & & & \\
     & & & & \textbf{+} & & & \\
     & & & & & \textbf{+} & & \\
     & & & & & & \textbf{+} & \\
     & & & & & & & \textbf{+}}
\hspace{-0.2cm}\nearrow
\hspace{-0.16cm}\SmallMatrix{
     & & & & \textbf{+} & & & \\
     & & & & & \textbf{+} & & \\
     & & & \textbf{+} & & & & \\
     & \textbf{+} & & & \textbf{-} & & & \textbf{+} \\
     \textbf{+} & & & \textbf{-} & & & \textbf{+} & \\
     & & & & \textbf{+} & & & \\
     & & \textbf{+} & & & & & \\
     & & & \textbf{+} & & & & }
\hspace{-0.2cm}\nearrow
\hspace{-0.16cm}\SmallMatrix{
     \textbf{+} & & & & & & & \\
     & \textbf{+} & & & & & & \\
     & & \textbf{+} & & & & & \\
     & & & \textbf{+} & & & & \\
     & & & & & \textbf{+} & & \\
     & & & & & & & \textbf{+} \\
     & & & & \textbf{+} & & & \\
     & & & & & & \textbf{+} & }
\hspace{-0.2cm}\nearrow
\hspace{-0.16cm}\SmallMatrix{
     & & & & & & & \textbf{+} \\
     & & & & & & \textbf{+} & \\
     & & & & & \textbf{+} & & \\
     & & & & \textbf{+} & & & \\
     & & \textbf{+} & & & & & \\
     & & & \textbf{+} & & & & \\
     \textbf{+} & & & & & & & \\
     & \textbf{+} & & & & & & }
\hspace{-0.2cm}\nearrow
\hspace{-0.16cm}\SmallMatrix{
     & & \textbf{+} & & & & & \\
     \textbf{+} & & & & & & & \\
     & & & & & & & \textbf{+} \\
     & & & & & & \textbf{+} & \\
     & & & \textbf{+} & & & & \\
     & \textbf{+} & & & & & & \\
     & & & & & \textbf{+} & & \\
     & & & & \textbf{+} & & & }\]

%%%%%%%%%%%%%%%%%%%%%%

\[ B = 
\SmallMatrix{
     & & & & & \textbf{+} & & \\
     & & & & \textbf{+} & & & \\
     & \textbf{+} & & & & & & \\
     \textbf{+} & & & & & & & \\
     & & & & & & & \textbf{+} \\
     & & & & & & \textbf{+} & \\
     & & & \textbf{+} & & & & \\
     & & \textbf{+} & & & & & }
\hspace{-0.2cm}\nearrow
\hspace{-0.16cm}\SmallMatrix{
     & & & \textbf{+} & & & & \\
     & & \textbf{+} & & & & & \\
     & & & & & & \textbf{+} & \\
     & & & & \textbf{+} & & & \\
     & \textbf{+} & & & & & & \\
     \textbf{+} & & & & & & & \\
     & & & & & & & \textbf{+} \\
     & & & & & \textbf{+} & & }
\hspace{-0.2cm}\nearrow
\hspace{-0.16cm}\SmallMatrix{
     & & & & & & \textbf{+} & \\
     & & & & & & & \textbf{+} \\
     & & & & \textbf{+} & & & \\
     & & & \textbf{+} & \textbf{-} & \textbf{+} & & \\
     & & & & \textbf{+} & & & \\
     & & \textbf{+} & & & & & \\
     & \textbf{+} & & & & & & \\
     \textbf{+} & & & & & & & }
\hspace{-0.2cm}\nearrow
\hspace{-0.16cm}\SmallMatrix{
     & \textbf{+} & & & & & & \\
     & & & \textbf{+} & & & & \\
     \textbf{+} & & & & & & & \\
     & & \textbf{+} & \textbf{-} & \textbf{+} & & & \\
     & & & \textbf{+} & & & & \\
     & & & & & \textbf{+} & & \\
     & & & & & & \textbf{+} & \\
     & & & & & & & \textbf{+}}
\hspace{-0.2cm}\nearrow
\hspace{-0.16cm}\SmallMatrix{
     & & & & \textbf{+} & & & \\
     & & & & & \textbf{+} & & \\
     & & & \textbf{+} & & & & \\
     & \textbf{+} & & & \textbf{-} & & & \textbf{+} \\
     \textbf{+} & & & \textbf{-} & & & \textbf{+} & \\
     & & & & \textbf{+} & & & \\
     & & \textbf{+} & & & & & \\
     & & & \textbf{+} & & & & }
\hspace{-0.2cm}\nearrow
\hspace{-0.16cm}\SmallMatrix{
     \textbf{+} & & & & & & & \\
     & \textbf{+} & & & & & & \\
     & & \textbf{+} & & & & & \\
     & & & & \textbf{+} & & & \\
     & & & \textbf{+} & \textbf{-} & \textbf{+} & & \\
     & & & & & & & \textbf{+} \\
     & & & & \textbf{+} & & & \\
     & & & & & & \textbf{+} & }
\hspace{-0.2cm}\nearrow
\hspace{-0.16cm}\SmallMatrix{
     & & & & & & & \textbf{+} \\
     & & & & & & \textbf{+} & \\
     & & & & & \textbf{+} & & \\
     & & & \textbf{+} & & & & \\
     & & \textbf{+} & \textbf{-} & \textbf{+} & & & \\
     & & & \textbf{+} & & & & \\
     \textbf{+} & & & & & & & \\
     & \textbf{+} & & & & & & }
\hspace{-0.2cm}\nearrow
\hspace{-0.16cm}\SmallMatrix{
     & & \textbf{+} & & & & & \\
     \textbf{+} & & & & & & & \\
     & & & & & & & \textbf{+} \\
     & & & & & & \textbf{+} & \\
     & & & \textbf{+} & & & & \\
     & \textbf{+} & & & & & & \\
     & & & & & \textbf{+} & & \\
     & & & & \textbf{+} & & & }\]

%%%%%%%%%%%%%%%%%%%%%%

\[L(A) = L(B) = 
\Matrix{
     6 & 4 & 8 & 2 & 5 & 1 & 3 & 7 \\
     8 & 6 & 2 & 4 & 1 & 5 & 7 & 3 \\
     4 & 1 & 6 & 5 & 3 & 7 & 2 & 8 \\
     1 & 5 & 4 & 6 & 4 & 3 & 8 & 5 \\
     5 & 2 & 7 & 6 & 4 & 6 & 5 & 1 \\
     2 & 8 & 3 & 7 & 5 & 4 & 1 & 6 \\
     7 & 3 & 5 & 1 & 6 & 8 & 4 & 2 \\
     3 & 7 & 1 & 5 & 8 & 2 & 6 & 4}\]
\end{example}

\section{The Maximum Number of Equal Entries of an ASHL}

The following question is also posed in Brualdi and Dahl's paper \cite{ashmbib}.

\begin{problem} What is the maximum number of times an integer can occur as an entry of an $n \times n$ ASHL? \end{problem}

It is shown in their paper that an integer can occur $2n$ times in an $n \times n$ ASHL, and it is asked if the maximum is equal to $2n$. The following example exceeds this bound.

\begin{example}\label{max_entries7} Here, 4 occurs as an entry in this $7 \times 7$ ASHL 29 times.
\[ A = 
\SmallMatrix{
     \color{white} 0 & \color{white} 0 & \textbf{+} & \color{white} 0 & \color{white} 0 & \color{white} 0 & \color{white} 0 \\
     \color{white} 0 & \textbf{+} & \color{white} 0 & \color{white} 0 & \color{white} 0 & \color{white} 0 & \color{white} 0 \\
     \textbf{+} & \color{white} 0 & \color{white} 0 & \color{white} 0 & \color{white} 0 & \color{white} 0 & \color{white} 0 \\
     \color{white} 0 & \color{white} 0 & \color{white} 0 & \textbf{+} & \color{white} 0 & \color{white} 0 & \color{white} 0 \\
     \color{white} 0 & \color{white} 0 & \color{white} 0 & \color{white} 0 & \color{white} 0 & \color{white} 0 & \textbf{+} \\
     \color{white} 0 & \color{white} 0 & \color{white} 0 & \color{white} 0 & \color{white} 0 & \textbf{+} & \color{white} 0 \\
     \color{white} 0 & \color{white} 0 & \color{white} 0 & \color{white} 0 & \textbf{+} & \color{white} 0 & \color{white} 0}
\hspace{-0.2cm}\nearrow\hspace{-0.2cm}\SmallMatrix{
     \color{white} 0 & \color{white} 0 & \color{white} 0 & \color{white} 0 & \color{white} 0 & \color{white} 0 & \textbf{+} \\
     \color{white} 0 & \color{white} 0 & \color{white} 0 & \color{white} 0 & \color{white} 0 & \textbf{+} & \color{white} 0 \\
     \color{white} 0 & \color{white} 0 & \color{white} 0 & \textbf{+} & \color{white} 0 & \color{white} 0 & \color{white} 0 \\
     \color{white} 0 & \color{white} 0 & \textbf{+} & \textbf{-} & \textbf{+} & \color{white} 0 & \color{white} 0 \\
     \color{white} 0 & \color{white} 0 & \color{white} 0 & \textbf{+} & \color{white} 0 & \color{white} 0 & \color{white} 0 \\
     \color{white} 0 & \textbf{+} & \color{white} 0 & \color{white} 0 & \color{white} 0 & \color{white} 0 & \color{white} 0 \\
     \textbf{+} & \color{white} 0 & \color{white} 0 & \color{white} 0 & \color{white} 0 & \color{white} 0 & \color{white} 0}
\hspace{-0.2cm}\nearrow\hspace{-0.2cm}\SmallMatrix{
     \color{white} 0 & \textbf{+} & \color{white} 0 & \color{white} 0 & \color{white} 0 & \color{white} 0 & \color{white} 0 \\
     \textbf{+} & \textbf{-} & \color{white} 0 & \textbf{+} & \color{white} 0 & \color{white} 0 & \color{white} 0 \\
     \color{white} 0 & \color{white} 0 & \textbf{+} & \textbf{-} & \textbf{+} & \color{white} 0 & \color{white} 0 \\
     \color{white} 0 & \textbf{+} & \textbf{-} & \textbf{+} & \textbf{-} & \textbf{+} & \color{white} 0 \\
     \color{white} 0 & \color{white} 0 & \textbf{+} & \textbf{-} & \textbf{+} & \color{white} 0 & \color{white} 0 \\
     \color{white} 0 & \color{white} 0 & \color{white} 0 & \textbf{+} & \color{white} 0 & \textbf{-} & \textbf{+} \\
     \color{white} 0 & \color{white} 0 & \color{white} 0 & \color{white} 0 & \color{white} 0 & \textbf{+} & \color{white} 0}
\hspace{-0.2cm}\nearrow\hspace{-0.2cm}\SmallMatrix{
     \color{white} 0 & \color{white} 0 & \color{white} 0 & \textbf{+} & \color{white} 0 & \color{white} 0 & \color{white} 0 \\
     \color{white} 0 & \color{white} 0 & \textbf{+} & \textbf{-} & \textbf{+} & \color{white} 0 & \color{white} 0 \\
     \color{white} 0 & \textbf{+} & \textbf{-} & \textbf{+} & \textbf{-} & \textbf{+} & \color{white} 0 \\
     \textbf{+} & \textbf{-} & \textbf{+} & \textbf{-} & \textbf{+} & \textbf{-} & \textbf{+} \\
     \color{white} 0 & \textbf{+} & \textbf{-} & \textbf{+} & \textbf{-} & \textbf{+} & \color{white} 0 \\
     \color{white} 0 & \color{white} 0 & \textbf{+} & \textbf{-} & \textbf{+} & \color{white} 0 & \color{white} 0 \\
     \color{white} 0 & \color{white} 0 & \color{white} 0 & \textbf{+} & \color{white} 0 & \color{white} 0 & \color{white} 0}
\hspace{-0.2cm}\nearrow\hspace{-0.2cm}\SmallMatrix{
     \color{white} 0 & \color{white} 0 & \color{white} 0 & \color{white} 0 & \color{white} 0 & \textbf{+} & \color{white} 0 \\
     \color{white} 0 & \color{white} 0 & \color{white} 0 & \textbf{+} & \color{white} 0 & \textbf{-} & \textbf{+} \\
     \color{white} 0 & \color{white} 0 & \textbf{+} & \textbf{-} & \textbf{+} & \color{white} 0 & \color{white} 0 \\
     \color{white} 0 & \textbf{+} & \textbf{-} & \textbf{+} & \textbf{-} & \textbf{+} & \color{white} 0 \\
     \color{white} 0 & \color{white} 0 & \textbf{+} & \textbf{-} & \textbf{+} & \color{white} 0 & \color{white} 0 \\
     \textbf{+} & \textbf{-} & \color{white} 0 & \textbf{+} & \color{white} 0 & \color{white} 0 & \color{white} 0 \\
     \color{white} 0 & \textbf{+} & \color{white} 0 & \color{white} 0 & \color{white} 0 & \color{white} 0 & \color{white} 0}
\hspace{-0.2cm}\nearrow\hspace{-0.2cm}\SmallMatrix{
     \textbf{+} & \color{white} 0 & \color{white} 0 & \color{white} 0 & \color{white} 0 & \color{white} 0 & \color{white} 0 \\
     \color{white} 0 & \textbf{+} & \color{white} 0 & \color{white} 0 & \color{white} 0 & \color{white} 0 & \color{white} 0 \\
     \color{white} 0 & \color{white} 0 & \color{white} 0 & \textbf{+} & \color{white} 0 & \color{white} 0 & \color{white} 0 \\
     \color{white} 0 & \color{white} 0 & \textbf{+} & \textbf{-} & \textbf{+} & \color{white} 0 & \color{white} 0 \\
     \color{white} 0 & \color{white} 0 & \color{white} 0 & \textbf{+} & \color{white} 0 & \color{white} 0 & \color{white} 0 \\
     \color{white} 0 & \color{white} 0 & \color{white} 0 & \color{white} 0 & \color{white} 0 & \textbf{+} & \color{white} 0 \\
     \color{white} 0 & \color{white} 0 & \color{white} 0 & \color{white} 0 & \color{white} 0 & \color{white} 0 & \textbf{+}}
\hspace{-0.2cm}\nearrow\hspace{-0.2cm}\SmallMatrix{
     \color{white} 0 & \color{white} 0 & \color{white} 0 & \color{white} 0 & \textbf{+} & \color{white} 0 & \color{white} 0 \\
     \color{white} 0 & \color{white} 0 & \color{white} 0 & \color{white} 0 & \color{white} 0 & \textbf{+} & \color{white} 0 \\
     \color{white} 0 & \color{white} 0 & \color{white} 0 & \color{white} 0 & \color{white} 0 & \color{white} 0 & \textbf{+} \\
     \color{white} 0 & \color{white} 0 & \color{white} 0 & \textbf{+} & \color{white} 0 & \color{white} 0 & \color{white} 0 \\
     \textbf{+} & \color{white} 0 & \color{white} 0 & \color{white} 0 & \color{white} 0 & \color{white} 0 & \color{white} 0 \\
     \color{white} 0 & \textbf{+} & \color{white} 0 & \color{white} 0 & \color{white} 0 & \color{white} 0 & \color{white} 0 \\
     \color{white} 0 & \color{white} 0 & \textbf{+} & \color{white} 0 & \color{white} 0 & \color{white} 0 & \color{white} 0}\]

\[L(A) =
\Matrix{
    6 & 3 & 1 & 4 & 7 & 5 & 2 \\
    3 & 4 & 4 & 4 & 4 & 4 & 5 \\
    1 & 4 & 4 & 4 & 4 & 4 & 7 \\
    4 & 4 & 4 & 4 & 4 & 4 & 4 \\
    7 & 4 & 4 & 4 & 4 & 4 & 1 \\
    5 & 4 & 4 & 4 & 4 & 4 & 3 \\
    2 & 5 & 7 & 4 & 1 & 3 & 6}\]

This is the highest possible number of times an entry can be repeated in a $7 \times 7$ ASHL, as each number $1, 2, \dots, n$ must appear exactly once in the first and last rows and columns of an $n \times n$ ASHL. This means that the upper bound for such a construction is $(n-2)^2 + 4$, which is $(7-2)^2+4=29$, in the $n=7$ case.

\end{example}

\begin{definition} We define the \emph{diamond positions} of an $n \times n$ ASM $A$ to be the positions of $A$ corresponding to non-zero entries of the diamond ASM $D_n$. \end{definition}

Example \ref{max_entries7} can be generalised in the following way.

\begin{theorem}\label{max_entries} For a given $n$, there exists an $n \times n$ ASHL such that 
\begin{itemize}
\item $\frac{n+1}{2}$ occurs as an entry $\frac{n^2+4n-19}{2}$ times, if $n$ is odd;
\item $\frac{n}{2}$ occurs as an entry $\frac{n^2+4n-20}{2}$ times, if $n$ is even.
\end{itemize}
\end{theorem}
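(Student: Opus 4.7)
The plan is to generalise Example~\ref{max_entries7} by constructing, for each $n$, an explicit $n \times n \times n$ ASHM $A_n$ whose ASHL realises the stated count. The construction places nested diamond ASMs at the centres of the planes: set $P_{(n+1)/2}(A_n) = D_n$ when $n$ is odd (and use $D_n$ and $D_n'$ as the two central planes when $n$ is even), and for each $k$ below the middle, place a smaller diamond ASM $D_{2k-1}$ in the central $(2k-1)\times(2k-1)$ block of $P_k(A_n)$, surrounded by a permutation ``frame'' on the outer rows and columns. The planes above the middle are defined by a $180$-degree rotation of those below, with local adjustments to the outer frame that preserve the vertical alternating-sign condition, exactly as can be observed in the $n=7$ example.

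To verify that $A_n$ is an ASHM, the row- and column-line conditions are automatic because each plane is an ASM by construction. The vertical-line condition is checked by partitioning positions $(i,j)$ into concentric shells around the centre and showing, shell by shell, that the sequence $P_1(A_n)_{ij}, \ldots, P_n(A_n)_{ij}$ consists of $\{0,\pm1\}$-entries whose non-zero terms alternate in sign and begin and end with $+1$; this is read off from which of the nested diamonds and which permutation frames cover $(i,j)$.

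To count the entries equal to the target value, I evaluate $L(A_n)_{ij}$ directly. The key observation, visible already in Example~\ref{max_entries7}, is that at each position $(i,j)$ in the central region the non-zero entries of the vertical line through $(i,j)$ occur at indices $k_1 < k_2 < \cdots < k_r$ with signs alternating $+,-,+,\ldots$, and the construction arranges the $k_t$ so that the alternating weighted sum $\sum_{t=1}^{r} (-1)^{t+1} k_t$ collapses to the target value $(n+1)/2$ (odd $n$) or $n/2$ (even $n$). Aggregating over the central region and adding the target-value entries contributed at the midpoints of the four border rows and columns by the extremal spikes of the middle-plane diamond gives the total $(n^2+4n-19)/2$ for odd $n$ and $(n^2+4n-20)/2$ for even $n$; the border-row/border-column contribution is forced to be at most one per border line, since the first and last rows and columns of an ASHL are each permutations of $\{1,\ldots,n\}$.

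The main obstacle is specifying the outer permutation frames of each plane with enough precision that the vertical-line alternation condition and the target weighted-sum condition simultaneously hold at every position of the target region, while remaining flexible enough to accommodate both parities of $n$ and the two diamond ASMs in the even case. I expect the verification to reduce to a case analysis by the distance of $(i,j)$ from the centre and by how many of the nested diamonds cover $(i,j)$, together with a separate check of small base cases to confirm the formula is attained rather than merely approached.
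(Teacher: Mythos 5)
Your overall strategy matches the paper's: stack nested diamond ASMs $D_{2j-1}$ in the planes $P_j$ (reflected above the middle), complete each plane to an ASM with permutation-like frames, and use the telescoping identity $(p-k+1)-(p-k+2)+\cdots-(p+k-2)+(p+k-1)=p$ to show that every diamond position of $L$ carries the value $p$. However, your count does not reach the claimed total. For odd $n$ the diamond positions of $L$ number $1+3+\cdots+(n-2)+n+(n-2)+\cdots+3+1=\frac{n^2+1}{2}$, and the four ``extremal spikes'' in the border rows and columns that you add separately are already among these diamond positions. So the argument as you describe it yields only $\frac{n^2+1}{2}$ occurrences of $p$, which falls short of $\frac{n^2+4n-19}{2}$ by exactly $2n-10$. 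Those missing $2n-10$ occurrences are the substance of the theorem: they live \emph{outside} the diamond, along four diagonals and anti-diagonals of length $\frac{n+1}{2}-3$ each (e.g.\ from $L_{2,m+2}$ to $L_{p-2,n-1}$ and its three symmetric images), and they only appear because the frame entries are not arbitrary permutation completions. The paper engineers them by placing specific diagonal runs of $+$ and $-$ entries in $P_{p-1}$, a diagonal of $+$ entries in $P_1$, anti-diagonals of $+$ entries in $P_2$ and in $P_{p-k}$ for $k=2,\dots,p-3$, and their reflections in the upper planes, so that the relevant vertical lines have exactly three non-zero entries whose weighted sum is, for instance, $1-(p-1)+(n+p-m-1)=p$.

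In short, the ``main obstacle'' you defer --- specifying the outer frames so that the weighted-sum condition holds on the target region --- is not a routine verification but the step that produces the $2n-10$ extra equal entries distinguishing this bound from the trivial diamond count. Until the frames are pinned down with that property (and checked to keep every vertical line alternating, which constrains them severely), the construction as proposed proves only that $p$ occurs $\frac{n^2+1}{2}$ times, not $\frac{n^2+4n-19}{2}$ times. The coincidence that these agree with $(n-2)^2+4$ at $n=7$ may be what obscured the gap when generalising from Example~\ref{max_entries7}.
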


\begin{proof}
Let $p = \lfloor \frac{n+1}{2} \rfloor$, $m = \lceil\frac{n+1}{2}\rceil$, and note that $p=m$ for odd $n$. We construct an ASHM $A$ with the required properties as follows.

\begin{itemize}
\item $P_p(A) = D_n$, and for $k = 1, 2, \dots, p-1$, plane $P_{p \pm k}(A)$ contains the diamond ASM $D_{n-2k}$ such that there is a $+$ entry in every position where there is a $-$ entry in the diamond ASM contained in the plane $P_{p \pm (k-1)}$.
\item The other non-zero entries of $P_{p-1}(A)$ are a diagonal of $+$ entries from $A_{1,m+2,p-1}$ to $A_{p-2,n,p-1}$, a diagonal of $-$ entries from $A_{2,m+2,p-1}$ to $A_{p-2,n-1,p-1}$, a diagonal of $+$ entries from $A_{m+2,1,p-1}$ to $A_{n,p-2,p-1}$, and a diagonal of $-$ entries from $A_{m+2,2,p-1}$ to $A_{n-1,p-2,p-1}$.
\item The other non-zero entries of $P_1(A)$ are a diagonal of $+$ entries from $A_{1,m+1,1}$ to $A_{p-1,n,1}$ and a diagonal of $+$ entries from $A_{m+1,1,1}$ to $A_{n,p-1,1}$.
\item The other non-zero entries of $P_2(A)$ are an anti-diagonal of $+$ entries from $A_{2,p-2,2}$ to $A_{p-2,2,2}$, an anti-diagonal of $+$ entries from $A_{m+2,n-1,2}$ to $A_{n-1,m+2,2}$, and $+$ entries in $A_{1,1,2}$ and $A_{n,n,2}$.
\item For $k=2, \dots, p-3$, the other non-zero entries of $P_{p-k}(A)$ are an anti-diagonal of $+$ entries from $A_{1,k,p-k}$ to $A_{k,1,p-k}$, and an anti-diagonal of $+$ entries from $A_{n-k+1,n,p-k}$ to $A_{n,n-k+1,p-k}$.
\item For $k = 1, 2, \dots, p-1$, the entries of $P_{p+k}(A)$ not containing $D_{n-2k}$ (as outlined in the first step) satisfy $A_{i,j,p+k} = A_{n-i, j, p-k}$.
\item If $n$ is even, the non-zero entries of $P_n(A)$ are an anti-diagonal from $A_{p,1,n}$ to $A_{1,p,n}$ and an anti-diagonal from $A_{n,m,n}$ to $A_{m,n,n}$.
\end{itemize}

\*

We see that $p$ occurs in all diamond positions of $L = L(A)$ because
\[(p-k+1) - (p-k+2) + \dots -(p+k-2) + (p+k-1) = p\text{.}\]

The other occurances of $p$ as entries of $L$ occur along diagonals from $L_{2,m+2}$ to $L_{p-2,n-1}$  and from $L_{m+2,2}$ to $L_{n-1,p-2}$ by
\[1 - (p-1) + (n+p-m-1) = n-m+1 = p\text(,)\]
and along antidiagonals from $L_{p-2,2}$ to $L_{2,p-2}$  and from $L_{n-1, m+2}$ to $L_{m+2,n-1}$ by
\[2 - (p+1) + (n+p-m) = n-m+1 = p\text{.}\]

\*

\underline{The odd case}:
\[
\SmallMatrix{
      \color{white} 0 &  \color{white} 0 &  \color{white} 0 &  \color{white} 0 &  \color{white} 0 &  \color{white} 0 & \textbf{+} &  \color{white} 0 &  \color{white} 0 &  \color{white} 0 &  \color{white} 0 \\
     \vspace{-0.1cm} \color{white} 0 &  \color{white} 0 &  \color{white} 0 &  \color{white} 0 &  \color{white} 0 &  \color{white} 0 &  \color{white} 0 & \textbf{+} &  \color{white} 0 &  \color{white} 0 &  \color{white} 0 \\
      \color{white} 0 &  \color{white} 0 &  \color{white} 0 &  \color{white} 0 &  \color{white} 0 &  \color{white} 0 &  \color{white} 0 &  \color{white} 0 & \ddots &  \color{white} 0 &  \color{white} 0 \\
      \color{white} 0 &  \color{white} 0 &  \color{white} 0 &  \color{white} 0 &  \color{white} 0 &  \color{white} 0 &  \color{white} 0 &  \color{white} 0 &  \color{white} 0 & \textbf{+} &  \color{white} 0 \\
      \color{white} 0 &  \color{white} 0 &  \color{white} 0 &  \color{white} 0 &  \color{white} 0 &  \color{white} 0 &  \color{white} 0 &  \color{white} 0 &  \color{white} 0 &  \color{white} 0 & \textbf{+} \\
      \color{white} 0 &  \color{white} 0 &  \color{white} 0 &  \color{white} 0 &  \color{white} 0 & \textbf{+} &  \color{white} 0 &  \color{white} 0 &  \color{white} 0 &  \color{white} 0 &  \color{white} 0 \\
     \textbf{+} &  \color{white} 0 &  \color{white} 0 &  \color{white} 0 &  \color{white} 0 &  \color{white} 0 &  \color{white} 0 &  \color{white} 0 &  \color{white} 0 &  \color{white} 0 &  \color{white} 0 \\
     \vspace{-0.1cm} \color{white} 0 & \textbf{+} &  \color{white} 0 &  \color{white} 0 &  \color{white} 0 &  \color{white} 0 &  \color{white} 0 &  \color{white} 0 &  \color{white} 0 &  \color{white} 0 &  \color{white} 0 \\
      \color{white} 0 &  \color{white} 0 & \ddots &  \color{white} 0 &  \color{white} 0 &  \color{white} 0 &  \color{white} 0 &  \color{white} 0 &  \color{white} 0 &  \color{white} 0 &  \color{white} 0 \\
      \color{white} 0 &  \color{white} 0 &  \color{white} 0 & \textbf{+} &  \color{white} 0 &  \color{white} 0 &  \color{white} 0 &  \color{white} 0 &  \color{white} 0 &  \color{white} 0 &  \color{white} 0 \\
      \color{white} 0 &  \color{white} 0 &  \color{white} 0 &  \color{white} 0 & \textbf{+} &  \color{white} 0 &  \color{white} 0 &  \color{white} 0 &  \color{white} 0 &  \color{white} 0 &  \color{white} 0}
\hspace{-0.12cm}\nearrow\hspace{-0.12cm}\SmallMatrix{
     \textbf{+} &  \color{white} 0 &  \color{white} 0 &  \color{white} 0 &  \color{white} 0 &  \color{white} 0 &  \color{white} 0 &  \color{white} 0 &  \color{white} 0 &  \color{white} 0 &  \color{white} 0 \\
     \vspace{-0.1cm} \color{white} 0 &  \color{white} 0 &  \color{white} 0 & \textbf{+} &  \color{white} 0 &  \color{white} 0 &  \color{white} 0 &  \color{white} 0 &  \color{white} 0 &  \color{white} 0 &  \color{white} 0 \\
      \color{white} 0 &  \color{white} 0 & \iddots &  \color{white} 0 &  \color{white} 0 &  \color{white} 0 &  \color{white} 0 &  \color{white} 0 &  \color{white} 0 &  \color{white} 0 &  \color{white} 0 \\
      \color{white} 0 & \textbf{+} &  \color{white} 0 &  \color{white} 0 &  \color{white} 0 &  \color{white} 0 &  \color{white} 0 &  \color{white} 0 &  \color{white} 0 &  \color{white} 0 &  \color{white} 0 \\
      \color{white} 0 &  \color{white} 0 &  \color{white} 0 &  \color{white} 0 &  \color{white} 0 & \textbf{+} &  \color{white} 0 &  \color{white} 0 &  \color{white} 0 &  \color{white} 0 &  \color{white} 0 \\
      \color{white} 0 &  \color{white} 0 &  \color{white} 0 &  \color{white} 0 & \textbf{+} & \textbf{-} & \textbf{+} &  \color{white} 0 &  \color{white} 0 &  \color{white} 0 &  \color{white} 0 \\
      \color{white} 0 &  \color{white} 0 &  \color{white} 0 &  \color{white} 0 &  \color{white} 0 & \textbf{+} &  \color{white} 0 &  \color{white} 0 &  \color{white} 0 &  \color{white} 0 &  \color{white} 0 \\
     \vspace{-0.1cm} \color{white} 0 &  \color{white} 0 &  \color{white} 0 &  \color{white} 0 &  \color{white} 0 &  \color{white} 0 &  \color{white} 0 &  \color{white} 0 &  \color{white} 0 & \textbf{+} &  \color{white} 0 \\
      \color{white} 0 &  \color{white} 0 &  \color{white} 0 &  \color{white} 0 &  \color{white} 0 &  \color{white} 0 &  \color{white} 0 &  \color{white} 0 & \iddots &  \color{white} 0 &  \color{white} 0 \\
      \color{white} 0 &  \color{white} 0 &  \color{white} 0 &  \color{white} 0 &  \color{white} 0 &  \color{white} 0 &  \color{white} 0 & \textbf{+} &  \color{white} 0 &  \color{white} 0 &  \color{white} 0 \\
      \color{white} 0 &  \color{white} 0 &  \color{white} 0 &  \color{white} 0 &  \color{white} 0 &  \color{white} 0 &  \color{white} 0 &  \color{white} 0 &  \color{white} 0 &  \color{white} 0 & \textbf{+}}
\hspace{-0.12cm}\nearrow\hspace{-0.12cm}\SmallMatrix{
     \vspace{-0.1cm} \color{white} 0 &  \color{white} 0 & \textbf{+} &  \color{white} 0 &  \color{white} 0 &  \color{white} 0 &  \color{white} 0 &  \color{white} 0 &  \color{white} 0 &  \color{white} 0 &  \color{white} 0 \\
      \color{white} 0 & \iddots &  \color{white} 0 &  \color{white} 0 &  \color{white} 0 &  \color{white} 0 &  \color{white} 0 &  \color{white} 0 &  \color{white} 0 &  \color{white} 0 &  \color{white} 0 \\
     \textbf{+} &  \color{white} 0 &  \color{white} 0 &  \color{white} 0 &  \color{white} 0 &  \color{white} 0 &  \color{white} 0 &  \color{white} 0 &  \color{white} 0 &  \color{white} 0 &  \color{white} 0 \\
      \color{white} 0 &  \color{white} 0 &  \color{white} 0 &  \color{white} 0 &  \color{white} 0 & \textbf{+} &  \color{white} 0 &  \color{white} 0 &  \color{white} 0 &  \color{white} 0 &  \color{white} 0 \\
      \color{white} 0 &  \color{white} 0 &  \color{white} 0 &  \color{white} 0 & \textbf{+} & \textbf{-} & \textbf{+} &  \color{white} 0 &  \color{white} 0 &  \color{white} 0 &  \color{white} 0 \\
      \color{white} 0 &  \color{white} 0 &  \color{white} 0 & \textbf{+} & \textbf{-} & \textbf{+} & \textbf{-} & \textbf{+} &  \color{white} 0 &  \color{white} 0 &  \color{white} 0 \\
      \color{white} 0 &  \color{white} 0 &  \color{white} 0 &  \color{white} 0 & \textbf{+} & \textbf{-} & \textbf{+} &  \color{white} 0 &  \color{white} 0 &  \color{white} 0 &  \color{white} 0 \\
      \color{white} 0 &  \color{white} 0 &  \color{white} 0 &  \color{white} 0 &  \color{white} 0 & \textbf{+} &  \color{white} 0 &  \color{white} 0 &  \color{white} 0 &  \color{white} 0 &  \color{white} 0 \\
     \vspace{-0.1cm} \color{white} 0 &  \color{white} 0 &  \color{white} 0 &  \color{white} 0 &  \color{white} 0 &  \color{white} 0 &  \color{white} 0 &  \color{white} 0 &  \color{white} 0 &  \color{white} 0 & \textbf{+} \\
      \color{white} 0 &  \color{white} 0 &  \color{white} 0 &  \color{white} 0 &  \color{white} 0 &  \color{white} 0 &  \color{white} 0 &  \color{white} 0 &  \color{white} 0 & \iddots &  \color{white} 0 \\
      \color{white} 0 &  \color{white} 0 &  \color{white} 0 &  \color{white} 0 &  \color{white} 0 &  \color{white} 0 &  \color{white} 0 &  \color{white} 0 & \textbf{+} &  \color{white} 0 &  \color{white} 0}
\hspace{-0.12cm}\nearrow
\hdots
\nearrow\hspace{-0.12cm}\SmallMatrix{
      \color{white} 0 & \textbf{+} &  \color{white} 0 &  \color{white} 0 &  \color{white} 0 &  \color{white} 0 &  \color{white} 0 &  \color{white} 0 &  \color{white} 0 &  \color{white} 0 &  \color{white} 0 \\
     \textbf{+} &  \color{white} 0 &  \color{white} 0 &  \color{white} 0 &  \color{white} 0 &  \color{white} 0 &  \color{white} 0 &  \color{white} 0 &  \color{white} 0 &  \color{white} 0 &  \color{white} 0 \\
      \color{white} 0 &  \color{white} 0 &  \color{white} 0 &  \color{white} 0 &  \color{white} 0 & \textbf{+} &  \color{white} 0 &  \color{white} 0 &  \color{white} 0 &  \color{white} 0 &  \color{white} 0 \\
     \vspace{-0.1cm} \color{white} 0 &  \color{white} 0 &  \color{white} 0 &  \color{white} 0 & \textbf{+} & \textbf{-} & \textbf{+} &  \color{white} 0 &  \color{white} 0 &  \color{white} 0 &  \color{white} 0 \\
      \color{white} 0 &  \color{white} 0 &  \color{white} 0 & \iddots &  \textbf{-} &  \textbf{+} &  \textbf{-} & \ddots &  \color{white} 0 &  \color{white} 0 &  \color{white} 0 \\
     \vspace{-0.1cm} \color{white} 0 &  \color{white} 0 & \textbf{+} & \textbf{-} &  \textbf{+} & \hdots &  \textbf{+} & \textbf{-} & \textbf{+} &  \color{white} 0 &  \color{white} 0 \\
      \color{white} 0 &  \color{white} 0 &  \color{white} 0 & \ddots &  \textbf{-} &  \textbf{+} &  \textbf{-} & \iddots &  \color{white} 0 &  \color{white} 0 &  \color{white} 0 \\
      \color{white} 0 &  \color{white} 0 &  \color{white} 0 &  \color{white} 0 & \textbf{+} & \textbf{-} & \textbf{+} &  \color{white} 0 &  \color{white} 0 &  \color{white} 0 &  \color{white} 0 \\
      \color{white} 0 &  \color{white} 0 &  \color{white} 0 &  \color{white} 0 &  \color{white} 0 & \textbf{+} &  \color{white} 0 &  \color{white} 0 &  \color{white} 0 &  \color{white} 0 &  \color{white} 0 \\
      \color{white} 0 &  \color{white} 0 &  \color{white} 0 &  \color{white} 0 &  \color{white} 0 &  \color{white} 0 &  \color{white} 0 &  \color{white} 0 &  \color{white} 0 &  \color{white} 0 & \textbf{+} \\
      \color{white} 0 &  \color{white} 0 &  \color{white} 0 &  \color{white} 0 &  \color{white} 0 &  \color{white} 0 &  \color{white} 0 &  \color{white} 0 &  \color{white} 0 & \textbf{+} &  \color{white} 0}
\hspace{-0.12cm}\nearrow\hspace{-0.12cm}\SmallMatrix{
     \vspace{-0.1cm} \color{white} 0 &  \color{white} 0 &  \color{white} 0 &  \color{white} 0 &  \color{white} 0 &  \color{white} 0 &  \color{white} 0 & \textbf{+} &  \color{white} 0 &  \color{white} 0 &  \color{white} 0 \\
     \vspace{-0.1cm} \color{white} 0 &  \color{white} 0 &  \color{white} 0 &  \color{white} 0 &  \color{white} 0 & \textbf{+} &  \color{white} 0 & \textbf{-} & \ddots &  \color{white} 0 &  \color{white} 0 \\
      \color{white} 0 &  \color{white} 0 &  \color{white} 0 &  \color{white} 0 & \textbf{+} & \textbf{-} & \textbf{+} &  \color{white} 0 & \ddots & \ddots &  \color{white} 0 \\
     \vspace{-0.1cm} \color{white} 0 &  \color{white} 0 &  \color{white} 0 & \textbf{+} & \textbf{-} & \hdots & \textbf{-} & \textbf{+} &  \color{white} 0 & \textbf{-} & \textbf{+} \\
      \color{white} 0 &  \color{white} 0 & \iddots &  \textbf{-} &  \textbf{+} &  \textbf{-} &  \textbf{+} &  \textbf{-} & \ddots &  \color{white} 0 &  \color{white} 0 \\
     \vspace{-0.1cm} \color{white} 0 & \textbf{+} & \textbf{-} &  \textbf{+} &  \textbf{-} & \hdots &  \textbf{-} &  \textbf{+} & \textbf{-} & \textbf{+} &  \color{white} 0 \\
      \color{white} 0 &  \color{white} 0 & \ddots &  \textbf{-} &  \textbf{+} &  \textbf{-} &  \textbf{+} &  \textbf{-} & \iddots &  \color{white} 0 &  \color{white} 0 \\
     \vspace{-0.1cm}\textbf{+} & \textbf{-} &  \color{white} 0 & \textbf{+} & \textbf{-} & \hdots & \textbf{-} & \textbf{+} &  \color{white} 0 &  \color{white} 0 &  \color{white} 0 \\
     \vspace{-0.1cm} \color{white} 0 & \ddots & \ddots &  \color{white} 0 & \textbf{+} & \textbf{-} & \textbf{+} &  \color{white} 0 &  \color{white} 0 &  \color{white} 0 &  \color{white} 0 \\
      \color{white} 0 &  \color{white} 0 & \ddots & \textbf{-} &  \color{white} 0 & \textbf{+} &  \color{white} 0 &  \color{white} 0 &  \color{white} 0 &  \color{white} 0 &  \color{white} 0 \\
      \color{white} 0 &  \color{white} 0 &  \color{white} 0 & \textbf{+} &  \color{white} 0 &  \color{white} 0 &  \color{white} 0 &  \color{white} 0 &  \color{white} 0 &  \color{white} 0 &  \color{white} 0}
\hspace{-0.12cm}\nearrow\hspace{-0.12cm}\SmallMatrix{
      \color{white} 0 &  \color{white} 0 &  \color{white} 0 &  \color{white} 0 &  \color{white} 0 & \textbf{+} &  \color{white} 0 &  \color{white} 0 &  \color{white} 0 &  \color{white} 0 &  \color{white} 0 \\
      \color{white} 0 &  \color{white} 0 &  \color{white} 0 &  \color{white} 0 & \textbf{+} & \textbf{-} & \textbf{+} &  \color{white} 0 &  \color{white} 0 &  \color{white} 0 &  \color{white} 0 \\
     \vspace{-0.1cm} \color{white} 0 &  \color{white} 0 &  \color{white} 0 & \textbf{+} & \textbf{-} & \hdots & \textbf{-} & \textbf{+} &  \color{white} 0 &  \color{white} 0 &  \color{white} 0 \\
      \color{white} 0 &  \color{white} 0 & \iddots &  \textbf{-} &  \textbf{+} &  \textbf{-} &  \textbf{+} &  \textbf{-} & \ddots &  \color{white} 0 &  \color{white} 0 \\
      \color{white} 0 &  \textbf{+} &  \textbf{-} &  \textbf{+} &  \textbf{-} &  \hdots &  \textbf{-} &  \textbf{+} &  \textbf{-} &  \textbf{+} &  \color{white} 0 \\
     \textbf{+} & \textbf{-} &  \textbf{+} &  \textbf{-} &  \textbf{+} & \hdots &  \textbf{+} &  \textbf{-} &  \textbf{+} & \textbf{-} & \textbf{+} \\
     \vspace{-0.1cm} \color{white} 0 &  \textbf{+} &  \textbf{-} &  \textbf{+} &  \textbf{-} &  \hdots &  \textbf{-} &  \textbf{+} &  \textbf{-} &  \textbf{+} &  \color{white} 0 \\
      \color{white} 0 &  \color{white} 0 & \ddots &  \textbf{-} &  \textbf{+} &  \textbf{-} &  \textbf{+} &  \textbf{-} & \iddots &  \color{white} 0 &  \color{white} 0 \\
      \color{white} 0 &  \color{white} 0 &  \color{white} 0 & \textbf{+} & \textbf{-} & \hdots & \textbf{-} & \textbf{+} &  \color{white} 0 &  \color{white} 0 &  \color{white} 0 \\
      \color{white} 0 &  \color{white} 0 &  \color{white} 0 &  \color{white} 0 & \textbf{+} & \textbf{-} & \textbf{+} &  \color{white} 0 &  \color{white} 0 &  \color{white} 0 &  \color{white} 0 \\
      \color{white} 0 &  \color{white} 0 &  \color{white} 0 &  \color{white} 0 &  \color{white} 0 & \textbf{+} &  \color{white} 0 &  \color{white} 0 &  \color{white} 0 &  \color{white} 0 &  \color{white} 0}
\]
\[
\hspace{-0.12cm}\nearrow\hspace{-0.12cm}\SmallMatrix{
     \vspace{-0.1cm} \color{white} 0 &  \color{white} 0 &  \color{white} 0 &  \textbf{+} &  \color{white} 0 &  \color{white} 0 &  \color{white} 0 & \color{white} 0 &  \color{white} 0 &  \color{white} 0 &  \color{white} 0 \\
     \vspace{-0.1cm} \color{white} 0 &  \color{white} 0 &  \iddots &  \textbf{-} &  \color{white} 0 & \textbf{+} &  \color{white} 0 & \color{white} 0 & \color{white} 0 &  \color{white} 0 &  \color{white} 0 \\
      \color{white} 0 &  \iddots &  \iddots &  \color{white} 0 & \textbf{+} & \textbf{-} & \textbf{+} &  \color{white} 0 & \color{white} 0 & \color{white} 0 &  \color{white} 0 \\
     \vspace{-0.1cm}\textbf{+} &  \textbf{-} &  \color{white} 0 & \textbf{+} & \textbf{-} & \hdots & \textbf{-} & \textbf{+} &  \color{white} 0 & \color{white} 0 & \color{white} 0 \\
      \color{white} 0 &  \color{white} 0 & \iddots &  \textbf{-} &  \textbf{+} &  \textbf{-} &  \textbf{+} &  \textbf{-} & \ddots &  \color{white} 0 &  \color{white} 0 \\
     \vspace{-0.1cm} \color{white} 0 & \textbf{+} & \textbf{-} &  \textbf{+} &  \textbf{-} & \hdots &  \textbf{-} &  \textbf{+} & \textbf{-} & \textbf{+} &  \color{white} 0 \\
      \color{white} 0 &  \color{white} 0 & \ddots &  \textbf{-} &  \textbf{+} &  \textbf{-} &  \textbf{+} &  \textbf{-} & \iddots &  \color{white} 0 &  \color{white} 0 \\
     \vspace{-0.1cm} \color{white} 0 & \color{white} 0 &  \color{white} 0 & \textbf{+} & \textbf{-} & \hdots & \textbf{-} & \textbf{+} &  \color{white} 0 &  \textbf{-} &  \textbf{+} \\
     \vspace{-0.1cm} \color{white} 0 & \color{white} 0 & \color{white} 0 &  \color{white} 0 & \textbf{+} & \textbf{-} & \textbf{+} &  \color{white} 0 &  \iddots &  \iddots &  \color{white} 0 \\
      \color{white} 0 &  \color{white} 0 & \color{white} 0 & \color{white} 0 &  \color{white} 0 & \textbf{+} &  \color{white} 0 &  \textbf{-} &  \iddots &  \color{white} 0 &  \color{white} 0 \\
      \color{white} 0 &  \color{white} 0 &  \color{white} 0 & \color{white} 0 &  \color{white} 0 &  \color{white} 0 &  \color{white} 0 &  \textbf{+} &  \color{white} 0 &  \color{white} 0 &  \color{white} 0}
\hspace{-0.12cm}\nearrow\hspace{-0.12cm}\SmallMatrix{
      \color{white} 0 & \color{white} 0 &  \color{white} 0 &  \color{white} 0 &  \color{white} 0 &  \color{white} 0 &  \color{white} 0 &  \color{white} 0 &  \color{white} 0 &  \textbf{+} &  \color{white} 0 \\
     \color{white} 0 &  \color{white} 0 &  \color{white} 0 &  \color{white} 0 &  \color{white} 0 &  \color{white} 0 &  \color{white} 0 &  \color{white} 0 &  \color{white} 0 &  \color{white} 0 &  \textbf{+} \\
      \color{white} 0 &  \color{white} 0 &  \color{white} 0 &  \color{white} 0 &  \color{white} 0 & \textbf{+} &  \color{white} 0 &  \color{white} 0 &  \color{white} 0 &  \color{white} 0 &  \color{white} 0 \\
     \vspace{-0.1cm} \color{white} 0 &  \color{white} 0 &  \color{white} 0 &  \color{white} 0 & \textbf{+} & \textbf{-} & \textbf{+} &  \color{white} 0 &  \color{white} 0 &  \color{white} 0 &  \color{white} 0 \\
      \color{white} 0 &  \color{white} 0 &  \color{white} 0 & \iddots &  \textbf{-} &  \textbf{+} &  \textbf{-} & \ddots &  \color{white} 0 &  \color{white} 0 &  \color{white} 0 \\
     \vspace{-0.1cm} \color{white} 0 &  \color{white} 0 & \textbf{+} & \textbf{-} &  \textbf{+} & \hdots &  \textbf{+} & \textbf{-} & \textbf{+} &  \color{white} 0 &  \color{white} 0 \\
      \color{white} 0 &  \color{white} 0 &  \color{white} 0 & \ddots &  \textbf{-} &  \textbf{+} &  \textbf{-} & \iddots &  \color{white} 0 &  \color{white} 0 &  \color{white} 0 \\
      \color{white} 0 &  \color{white} 0 &  \color{white} 0 &  \color{white} 0 & \textbf{+} & \textbf{-} & \textbf{+} &  \color{white} 0 &  \color{white} 0 &  \color{white} 0 &  \color{white} 0 \\
      \color{white} 0 &  \color{white} 0 &  \color{white} 0 &  \color{white} 0 &  \color{white} 0 & \textbf{+} &  \color{white} 0 &  \color{white} 0 &  \color{white} 0 &  \color{white} 0 &  \color{white} 0 \\
      \textbf{+} &  \color{white} 0 &  \color{white} 0 &  \color{white} 0 &  \color{white} 0 &  \color{white} 0 &  \color{white} 0 &  \color{white} 0 &  \color{white} 0 &  \color{white} 0 & \color{white} 0 \\
      \color{white} 0 &  \textbf{+} &  \color{white} 0 &  \color{white} 0 &  \color{white} 0 &  \color{white} 0 &  \color{white} 0 &  \color{white} 0 &  \color{white} 0 & \color{white} 0 &  \color{white} 0}
\hspace{-0.12cm}\nearrow
\hdots
\nearrow\hspace{-0.12cm}\SmallMatrix{
     \vspace{-0.1cm} \color{white} 0 &  \color{white} 0 & \color{white} 0 &  \color{white} 0 &  \color{white} 0 &  \color{white} 0 &  \color{white} 0 &  \color{white} 0 &  \textbf{+} &  \color{white} 0 &  \color{white} 0 \\
      \color{white} 0 & \color{white} 0 &  \color{white} 0 &  \color{white} 0 &  \color{white} 0 &  \color{white} 0 &  \color{white} 0 &  \color{white} 0 &  \color{white} 0 &  \ddots &  \color{white} 0 \\
      \color{white} 0 &  \color{white} 0 &  \color{white} 0 &  \color{white} 0 &  \color{white} 0 &  \color{white} 0 &  \color{white} 0 &  \color{white} 0 &  \color{white} 0 &  \color{white} 0 &  \textbf{+} \\
      \color{white} 0 &  \color{white} 0 &  \color{white} 0 &  \color{white} 0 &  \color{white} 0 & \textbf{+} &  \color{white} 0 &  \color{white} 0 &  \color{white} 0 &  \color{white} 0 &  \color{white} 0 \\
      \color{white} 0 &  \color{white} 0 &  \color{white} 0 &  \color{white} 0 & \textbf{+} & \textbf{-} & \textbf{+} &  \color{white} 0 &  \color{white} 0 &  \color{white} 0 &  \color{white} 0 \\
      \color{white} 0 &  \color{white} 0 &  \color{white} 0 & \textbf{+} & \textbf{-} & \textbf{+} & \textbf{-} & \textbf{+} &  \color{white} 0 &  \color{white} 0 &  \color{white} 0 \\
      \color{white} 0 &  \color{white} 0 &  \color{white} 0 &  \color{white} 0 & \textbf{+} & \textbf{-} & \textbf{+} &  \color{white} 0 &  \color{white} 0 &  \color{white} 0 &  \color{white} 0 \\
      \color{white} 0 &  \color{white} 0 &  \color{white} 0 &  \color{white} 0 &  \color{white} 0 & \textbf{+} &  \color{white} 0 &  \color{white} 0 &  \color{white} 0 &  \color{white} 0 &  \color{white} 0 \\
     \vspace{-0.1cm} \textbf{+} &  \color{white} 0 &  \color{white} 0 &  \color{white} 0 &  \color{white} 0 &  \color{white} 0 &  \color{white} 0 &  \color{white} 0 &  \color{white} 0 &  \color{white} 0 & \color{white} 0 \\
      \color{white} 0 & \ddots &  \color{white} 0 &  \color{white} 0 &  \color{white} 0 &  \color{white} 0 &  \color{white} 0 &  \color{white} 0 &  \color{white} 0 & \color{white} 0 &  \color{white} 0 \\
      \color{white} 0 &  \color{white} 0 &  \textbf{+} &  \color{white} 0 &  \color{white} 0 &  \color{white} 0 &  \color{white} 0 &  \color{white} 0 & \color{white} 0 &  \color{white} 0 &  \color{white} 0}
\hspace{-0.12cm}\nearrow\hspace{-0.12cm}\SmallMatrix{
     \color{white} 0 &  \color{white} 0 &  \color{white} 0 &  \color{white} 0 &  \color{white} 0 &  \color{white} 0 &  \color{white} 0 &  \color{white} 0 &  \color{white} 0 &  \color{white} 0 &  \textbf{+} \\
     \vspace{-0.1cm} \color{white} 0 &  \color{white} 0 &  \color{white} 0 & \color{white} 0 &  \color{white} 0 &  \color{white} 0 &  \color{white} 0 &  \textbf{+} &  \color{white} 0 &  \color{white} 0 &  \color{white} 0 \\
      \color{white} 0 &  \color{white} 0 & \color{white} 0 &  \color{white} 0 &  \color{white} 0 &  \color{white} 0 &  \color{white} 0 &  \color{white} 0 &  \ddots &  \color{white} 0 &  \color{white} 0 \\
      \color{white} 0 & \color{white} 0 &  \color{white} 0 &  \color{white} 0 &  \color{white} 0 &  \color{white} 0 &  \color{white} 0 &  \color{white} 0 &  \color{white} 0 &  \textbf{+} &  \color{white} 0 \\
      \color{white} 0 &  \color{white} 0 &  \color{white} 0 &  \color{white} 0 &  \color{white} 0 & \textbf{+} &  \color{white} 0 &  \color{white} 0 &  \color{white} 0 &  \color{white} 0 &  \color{white} 0 \\
      \color{white} 0 &  \color{white} 0 &  \color{white} 0 &  \color{white} 0 & \textbf{+} & \textbf{-} & \textbf{+} &  \color{white} 0 &  \color{white} 0 &  \color{white} 0 &  \color{white} 0 \\
      \color{white} 0 &  \color{white} 0 &  \color{white} 0 &  \color{white} 0 &  \color{white} 0 & \textbf{+} &  \color{white} 0 &  \color{white} 0 &  \color{white} 0 &  \color{white} 0 &  \color{white} 0 \\
     \vspace{-0.1cm} \color{white} 0 &  \textbf{+} &  \color{white} 0 &  \color{white} 0 &  \color{white} 0 &  \color{white} 0 &  \color{white} 0 &  \color{white} 0 &  \color{white} 0 & \color{white} 0 &  \color{white} 0 \\
      \color{white} 0 &  \color{white} 0 &  \ddots &  \color{white} 0 &  \color{white} 0 &  \color{white} 0 &  \color{white} 0 &  \color{white} 0 & \color{white} 0 &  \color{white} 0 &  \color{white} 0 \\
      \color{white} 0 &  \color{white} 0 &  \color{white} 0 &  \textbf{+} &  \color{white} 0 &  \color{white} 0 &  \color{white} 0 & \color{white} 0 &  \color{white} 0 &  \color{white} 0 &  \color{white} 0 \\
      \textbf{+} &  \color{white} 0 &  \color{white} 0 &  \color{white} 0 &  \color{white} 0 &  \color{white} 0 &  \color{white} 0 &  \color{white} 0 &  \color{white} 0 &  \color{white} 0 & \color{white} 0}
\hspace{-0.12cm}\nearrow\hspace{-0.12cm}\SmallMatrix{
      \color{white} 0 &  \color{white} 0 &  \color{white} 0 &  \color{white} 0 &  \textbf{+} &  \color{white} 0 & \color{white} 0 &  \color{white} 0 &  \color{white} 0 &  \color{white} 0 &  \color{white} 0 \\
     \vspace{-0.1cm} \color{white} 0 &  \color{white} 0 &  \color{white} 0 &  \textbf{+} &  \color{white} 0 &  \color{white} 0 &  \color{white} 0 & \color{white} 0 &  \color{white} 0 &  \color{white} 0 &  \color{white} 0 \\
      \color{white} 0 &  \color{white} 0 &  \iddots &  \color{white} 0 &  \color{white} 0 &  \color{white} 0 &  \color{white} 0 &  \color{white} 0 & \color{white} 0 &  \color{white} 0 &  \color{white} 0 \\
      \color{white} 0 &  \textbf{+} &  \color{white} 0 &  \color{white} 0 &  \color{white} 0 &  \color{white} 0 &  \color{white} 0 &  \color{white} 0 &  \color{white} 0 & \color{white} 0 &  \color{white} 0 \\
     \textbf{+} &  \color{white} 0 &  \color{white} 0 &  \color{white} 0 &  \color{white} 0 &  \color{white} 0 &  \color{white} 0 &  \color{white} 0 &  \color{white} 0 &  \color{white} 0 & \color{white} 0 \\
      \color{white} 0 &  \color{white} 0 &  \color{white} 0 &  \color{white} 0 &  \color{white} 0 & \textbf{+} &  \color{white} 0 &  \color{white} 0 &  \color{white} 0 &  \color{white} 0 &  \color{white} 0 \\
      \color{white} 0 &  \color{white} 0 &  \color{white} 0 &  \color{white} 0 &  \color{white} 0 &  \color{white} 0 &  \color{white} 0 &  \color{white} 0 &  \color{white} 0 &  \color{white} 0 &  \textbf{+} \\
     \vspace{-0.1cm} \color{white} 0 & \color{white} 0 &  \color{white} 0 &  \color{white} 0 &  \color{white} 0 &  \color{white} 0 &  \color{white} 0 &  \color{white} 0 &  \color{white} 0 &  \textbf{+} &  \color{white} 0 \\
      \color{white} 0 &  \color{white} 0 & \color{white} 0 &  \color{white} 0 &  \color{white} 0 &  \color{white} 0 &  \color{white} 0 &  \color{white} 0 &  \iddots &  \color{white} 0 &  \color{white} 0 \\
      \color{white} 0 &  \color{white} 0 &  \color{white} 0 & \color{white} 0 &  \color{white} 0 &  \color{white} 0 &  \color{white} 0 &  \textbf{+} &  \color{white} 0 &  \color{white} 0 &  \color{white} 0 \\
      \color{white} 0 &  \color{white} 0 &  \color{white} 0 &  \color{white} 0 & \color{white} 0 &  \color{white} 0 &  \textbf{+} &  \color{white} 0 &  \color{white} 0 &  \color{white} 0 &  \color{white} 0}
\]

It can be easily seen that each plane of this hypermatrix is an ASM. All vertical lines of $A$ corresponding to diamond positions of $L$ clearly have the alternating property. All vertical lines corresponding to the diagonal from $(2,p+2)$ to $(p-2,n-1)$, the diagonal from $(p+2,2)$ to $(n-1,p-2)$, the anti-diagonal from $(2,p-2)$ to $(p-2,2)$, and the anti-diagonal from $(p+2,n-1)$ to $(n-1, p+2)$ have exactly three non-zero entries, which alternate $+, - , +$. All other vertical lines contain exactly one non-zero entry, namely one $+$ entry. Therefore this is an ASHM.

\*

The $p$ entries occur in the following positions of the corresponding ASHL.
\[\Matrix{
    \color{white} 0 & \color{white} 0 & \color{white} 0 & \color{white} 0 & \color{white} 0\color{white} 0 & p & \color{white} 0 & \color{white} 0 & \color{white} 0 & \color{white} 0 & \color{white} 00 \\
    \vspace{-0.1cm}\color{white} 0 & \color{white} 0 & \color{white} 0 & p & p & p & p & p & \color{white} 0 & \color{white} 0 & \color{white} 0 \\
    \vspace{-0.1cm}\color{white} 0 & \color{white} 0 & \iddots & \iddots &  p & \vdots & p & \ddots & \ddots & \color{white} 0 & \color{white} 0 \\
    \color{white} 0& p & \iddots  & \iddots &\color{white}  p &\color{white}  p & \color{white} p &\ddots & \ddots & p & \color{white} 0 \\
    \color{white} 0\color{white} 0 & p &  p &\color{white}  p & \color{white} p &\color{white}  p &\color{white}  p &\color{white}  p &  p & p & \color{white} 0 \\
    p & p & \hdots & \color{white} p & \color{white} p & \color{white} p & \color{white} p & \color{white} p & \hdots & p & p \\
    \vspace{-0.1cm}\color{white} 0 & p &  p & \color{white} p & \color{white} p & \color{white} p & \color{white} p & \color{white} p & p & p & \color{white} 0\color{white} 0 \\
    \vspace{-0.1cm}\color{white} 0 & p & \ddots & \ddots & \color{white} p & \color{white} p & \color{white} p & \iddots & \iddots & p & \color{white} 0 \\
    \color{white} 0 & \color{white} 0 & \ddots & \ddots & p & \vdots & p & \iddots & \iddots & \color{white} 0 & \color{white} 0 \\
    \color{white} 0 & \color{white} 0 & \color{white} 0 & p & p & p & p & p & \color{white} 0 & \color{white} 0 & \color{white} 0 \\
    \color{white} 00 & \color{white} 0 & \color{white} 0 & \color{white} 0 & \color{white} 0 & p & \color{white} 0\color{white} 0 & \color{white} 0 & \color{white} 0 & \color{white} 0 & \color{white} 0}
\]

As outlined above, $p$ occurs as an entry in the diamond positions of $L$, and also occurs as every entry in the diagonal from $L_{2,p+2}$ to $L_{p-2,n-1}$, the diagonal from $L_{p+2,2}$ to $L_{n-1,p-2}$, the anti-diagonal from $L_{2,p-2}$ to $L_{p-2,2}$, and the anti-diagonal from $L_{p+2,n-1}$ to $L_{n-1, p+2}$.

\*

Therefore $p$ occurs as an entry of $L$ a total of $\frac{n^2+4n-19}{2}$ times:
\[(1 + 3 + \dots + n-2 + n + n-2 + \dots + 3 +1) + 4\Big(\frac{n+1}{2}-3\Big) = \Big(\frac{n+1}{2}\Big)^2 + \Big(\frac{n-1}{2}\Big)^2 + (2n -10) = \frac{n^2+4n-19}{2}\]

\underline{The even case:}
\[
\hspace{-0.4cm}\SmallMatrix{
     \color{white} 0 & \color{white} 0 & \color{white} 0 & \color{white} 0 & \color{white} 0 & \color{white} 0 & \color{white} 0 & \textbf{+} & \color{white} 0 & \color{white} 0 & \color{white} 0 & \color{white} 0 \\
     \vspace{-0.1cm}\color{white} 0 & \color{white}0 & \color{white} 0 & \color{white} 0 & \color{white} 0 & \color{white} 0 & \color{white} 0 & \color{white} 0 & \textbf{+} & \color{white} 0 & \color{white} 0 & \color{white} 0 \\
     \color{white} 0 & \color{white} 0 & \color{white} 0 & \color{white} 0 & \color{white} 0 & \color{white} 0 & \color{white} 0 & \color{white} 0 & \color{white} 0 & \ddots & \color{white} 0 & \color{white} 0 \\
     \color{white} 0 & \color{white} 0 & \color{white} 0 & \color{white} 0 & \color{white} 0 & \color{white} 0 & \color{white} 0 & \color{white} 0 & \color{white} 0 & \color{white} 0 & \textbf{+} & \color{white} 0 \\
     \color{white} 0 & \color{white} 0 & \color{white} 0 & \color{white} 0 & \color{white} 0 & \color{white} 0 & \color{white} 0 & \color{white} 0 & \color{white} 0 & \color{white} 0 & \color{white} 0 & \textbf{+} \\
     \color{white} 0 & \color{white} 0 & \color{white} 0 & \color{white} 0 & \color{white} 0 & \color{white} 0 & \textbf{+} & \color{white} 0 & \color{white} 0 & \color{white} 0 & \color{white} 0 & \color{white} 0 \\
     \color{white} 0 & \color{white} 0 & \color{white} 0 & \color{white} 0 & \color{white} 0 & \textbf{+} & \color{white} 0 & \color{white} 0 & \color{white} 0 & \color{white} 0 & \color{white} 0 & \color{white} 0 \\
     \textbf{+} & \color{white} 0 & \color{white} 0 & \color{white} 0 & \color{white} 0 & \color{white} 0 & \color{white} 0 & \color{white} 0 & \color{white} 0 & \color{white} 0 & \color{white} 0 & \color{white} 0 \\
     \vspace{-0.1cm}\color{white}0 & \textbf{+} & \color{white} 0 & \color{white} 0 & \color{white} 0 & \color{white} 0 & \color{white} 0 & \color{white} 0 & \color{white} 0 & \color{white} 0 & \color{white} 0 & \color{white} 0 \\
     \color{white} 0 & \color{white} 0 & \ddots & \color{white} 0 & \color{white} 0 & \color{white} 0 & \color{white} 0 & \color{white} 0 & \color{white} 0 & \color{white} 0 & \color{white} 0 & \color{white} 0 \\
     \color{white} 0 & \color{white} 0 & \color{white} 0 & \textbf{+} & \color{white} 0 & \color{white} 0 & \color{white} 0 & \color{white} 0 & \color{white} 0 & \color{white} 0 & \color{white} 0 & \color{white} 0 \\
     \color{white} 0 & \color{white} 0 & \color{white} 0 & \color{white} 0 & \textbf{+} & \color{white} 0 & \color{white} 0 & \color{white} 0 & \color{white} 0 & \color{white} 0 &\color{white} 0 &  \color{white} 0}
\hspace{-0.12cm}\nearrow\hspace{-0.12cm}\SmallMatrix{
     \textbf{+} & \color{white} 0 & \color{white} 0 & \color{white} 0 & \color{white} 0 & \color{white} 0 & \color{white} 0 & \color{white} 0 & \color{white} 0 & \color{white} 0 & \color{white} 0 & \color{white} 0 \\
     \vspace{-0.1cm}\color{white}0 & \color{white} 0 & \color{white} 0 & \textbf{+} & \color{white} 0 & \color{white} 0 & \color{white} 0 & \color{white} 0 & \color{white} 0 & \color{white} 0 & \color{white} 0 & \color{white} 0 \\
     \color{white} 0 & \color{white} 0 & \iddots & \color{white} 0 & \color{white} 0 & \color{white} 0 & \color{white} 0 & \color{white} 0 & \color{white} 0 & \color{white} 0 & \color{white} 0 & \color{white} 0 \\
     \color{white} 0 & \textbf{+} & \color{white} 0 & \color{white} 0 & \color{white} 0 & \color{white} 0 & \color{white} 0 & \color{white} 0 & \color{white} 0 & \color{white} 0 & \color{white} 0 & \color{white} 0 \\
     \color{white} 0 & \color{white} 0 & \color{white} 0 & \color{white} 0 & \color{white} 0 & \color{white} 0 & \textbf{+} & \color{white} 0 & \color{white} 0 & \color{white} 0 & \color{white} 0 & \color{white} 0 \\
     \color{white} 0 & \color{white} 0 & \color{white} 0 & \color{white} 0 & \color{white} 0 & \textbf{+} & \textbf{-} & \textbf{+} & \color{white} 0 & \color{white} 0 & \color{white} 0 & \color{white} 0 \\
     \color{white} 0 & \color{white} 0 & \color{white} 0 & \color{white} 0 & \textbf{+} & \textbf{-} & \textbf{+} & \color{white} 0 & \color{white} 0 & \color{white} 0 & \color{white} 0 & \color{white} 0 \\
     \color{white} 0 & \color{white} 0 & \color{white} 0 & \color{white} 0 & \color{white} 0 & \textbf{+} & \color{white} 0 & \color{white} 0 & \color{white} 0 & \color{white} 0 & \color{white} 0 & \color{white} 0 \\
     \vspace{-0.1cm}\color{white}0 & \color{white} 0 & \color{white} 0 & \color{white} 0 & \color{white} 0 & \color{white} 0 & \color{white} 0 & \color{white} 0 & \color{white} 0 & \color{white} 0 & \textbf{+} & \color{white} 0 \\
     \color{white} 0 & \color{white} 0 & \color{white} 0 & \color{white} 0 & \color{white} 0 & \color{white} 0 & \color{white} 0 & \color{white} 0 & \color{white} 0 & \iddots & \color{white} 0 & \color{white} 0 \\
     \color{white} 0 & \color{white} 0 & \color{white} 0 & \color{white} 0 & \color{white} 0 & \color{white} 0 & \color{white} 0 & \color{white} 0 & \textbf{+} & \color{white} 0 & \color{white} 0 & \color{white} 0 \\
     \color{white} 0 & \color{white} 0 & \color{white} 0 & \color{white} 0 & \color{white} 0 & \color{white} 0 & \color{white} 0 & \color{white} 0 & \color{white} 0 & \color{white} 0 & \color{white} 0 & \textbf{+}}
\hspace{-0.12cm}\nearrow\hspace{-0.12cm}\SmallMatrix{
     \vspace{-0.1cm}\color{white}0 & \color{white} 0 & \textbf{+} & \color{white} 0 & \color{white} 0 & \color{white} 0 & \color{white} 0 & \color{white} 0 & \color{white} 0 & \color{white} 0 & \color{white} 0 & \color{white} 0 \\
     \color{white} 0 & \iddots & \color{white} 0 & \color{white} 0 & \color{white} 0 & \color{white} 0 & \color{white} 0 & \color{white} 0 & \color{white} 0 & \color{white} 0 & \color{white} 0 & \color{white} 0 \\
     \textbf{+} & \color{white} 0 & \color{white} 0 & \color{white} 0 & \color{white} 0 & \color{white} 0 & \color{white} 0 & \color{white} 0 & \color{white} 0 & \color{white} 0 & \color{white} 0 & \color{white} 0 \\
     \color{white} 0 & \color{white} 0 & \color{white} 0 & \color{white} 0 & \color{white} 0 & \color{white} 0 & \textbf{+} & \color{white} 0 & \color{white} 0 & \color{white} 0 & \color{white} 0 & \color{white} 0 \\
     \color{white} 0 & \color{white} 0 & \color{white} 0 & \color{white} 0 & \color{white} 0 & \textbf{+} & \textbf{-} & \textbf{+} & \color{white} 0 & \color{white} 0 & \color{white} 0 & \color{white} 0 \\
     \color{white} 0 & \color{white} 0 & \color{white} 0 & \color{white} 0 & \textbf{+} & \textbf{-} & \textbf{+} & \textbf{-} & \textbf{+} & \color{white} 0 & \color{white} 0 & \color{white} 0 \\
     \color{white} 0 & \color{white} 0 & \color{white} 0 & \textbf{+} & \textbf{-} & \textbf{+} & \textbf{-} & \textbf{+} & \color{white} 0 & \color{white} 0 & \color{white} 0 & \color{white} 0 \\
     \color{white} 0 & \color{white} 0 & \color{white} 0 & \color{white} 0 & \textbf{+} & \textbf{-} & \textbf{+} & \color{white} 0 & \color{white} 0 & \color{white} 0 & \color{white} 0 & \color{white} 0 \\
     \color{white} 0 & \color{white} 0 & \color{white} 0 & \color{white} 0 & \color{white} 0 & \textbf{+} & \color{white} 0 & \color{white} 0 & \color{white} 0 & \color{white} 0 & \color{white} 0 & \color{white} 0 \\
     \vspace{-0.1cm}\color{white} 0 & \color{white}0 & \color{white} 0 & \color{white} 0 & \color{white} 0 & \color{white} 0 & \color{white} 0 & \color{white} 0 & \color{white} 0 & \color{white} 0 & \color{white} 0 & \textbf{+} \\
     \color{white} 0 & \color{white} 0 & \color{white} 0 & \color{white} 0 & \color{white} 0 & \color{white} 0 & \color{white} 0 & \color{white} 0 & \color{white} 0 & \color{white} 0 & \iddots & \color{white} 0 \\
     \color{white} 0 & \color{white} 0 & \color{white} 0 & \color{white} 0 & \color{white} 0 & \color{white} 0 & \color{white} 0 & \color{white} 0 & \color{white} 0 & \textbf{+} & \color{white} 0 & \color{white} 0}
\hspace{-0.12cm}\nearrow
\hdots
\nearrow\hspace{-0.12cm}\SmallMatrix{
     \color{white} 0 & \textbf{+} & \color{white} 0 & \color{white} 0 & \color{white} 0 & \color{white} 0 & \color{white} 0 & \color{white} 0 & \color{white} 0 & \color{white} 0 & \color{white} 0 & \color{white} 0 \\
     \textbf{+} & \color{white} 0 & \color{white} 0 & \color{white} 0 & \color{white} 0 & \color{white} 0 & \color{white} 0 & \color{white} 0 & \color{white} 0 & \color{white} 0 & \color{white} 0 & \color{white} 0 \\
     \color{white} 0 & \color{white} 0 & \color{white} 0 & \color{white} 0 & \color{white} 0 & \color{white} 0 & \textbf{+} & \color{white} 0 & \color{white} 0 & \color{white} 0 & \color{white} 0 & \color{white} 0 \\
     \vspace{-0.1cm}\color{white} 0 & \color{white}0 & \color{white} 0 & \color{white} 0 & \color{white} 0 & \textbf{+} & \textbf{-} & \textbf{+} & \color{white} 0 & \color{white} 0 & \color{white} 0 & \color{white} 0 \\
     \color{white} 0 & \color{white} 0 & \color{white} 0 & \color{white} 0 & \iddots & \color{white} \textbf{-} & \color{white} \textbf{+} & \color{white} \textbf{-} & \ddots & \color{white} 0 & \color{white} 0 & \color{white} 0 \\
     \vspace{-0.1cm}\color{white} 0 & \color{white}0 & \color{white} 0 & \textbf{+} & \textbf{-} & \color{white} \textbf{+} & \hdots & \color{white} \textbf{+} & \textbf{-} & \textbf{+} & \color{white} 0 & \color{white} 0 \\
     \vspace{-0.1cm}\color{white}0 & \color{white} 0 & \textbf{+} & \textbf{-} & \color{white} \textbf{+} & \hdots & \color{white} \textbf{+} & \textbf{-} & \textbf{+} & \color{white} 0 & \color{white} 0 & \color{white} 0 \\
     \color{white} 0 & \color{white} 0 & \color{white} 0 & \ddots & \color{white} \textbf{-} & \color{white} \textbf{+} & \color{white} \textbf{-} & \iddots & \color{white} 0 & \color{white} 0 & \color{white} 0 & \color{white} 0 \\
     \color{white} 0 & \color{white} 0 & \color{white} 0 & \color{white} 0 & \textbf{+} & \textbf{-} & \textbf{+} & \color{white} 0 & \color{white} 0 & \color{white} 0 & \color{white} 0 & \color{white} 0 \\
     \color{white} 0 & \color{white} 0 & \color{white} 0 & \color{white} 0 & \color{white} 0 & \textbf{+} & \color{white} 0 & \color{white} 0 & \color{white} 0 & \color{white} 0 & \color{white} 0 & \color{white} 0 \\
     \color{white} 0 & \color{white} 0 & \color{white} 0 & \color{white} 0 & \color{white} 0 & \color{white} 0 & \color{white} 0 & \color{white} 0 & \color{white} 0 & \color{white} 0 & \color{white} 0 & \textbf{+} \\
     \color{white} 0 & \color{white} 0 & \color{white} 0 & \color{white} 0 & \color{white} 0 & \color{white} 0 & \color{white} 0 & \color{white} 0 & \color{white} 0 & \color{white} 0 & \textbf{+} & \color{white} 0}
\hspace{-0.12cm}\nearrow\hspace{-0.12cm}\SmallMatrix{
     \vspace{-0.1cm}\color{white} 0 & \color{white}0 & \color{white} 0 & \color{white} 0 & \color{white} 0 & \color{white} 0 & \color{white} 0 & \color{white} 0 & \textbf{+} & \color{white} 0 & \color{white} 0 & \color{white} 0 \\
     \vspace{-0.1cm}\color{white} 0 & \color{white}0 & \color{white} 0 & \color{white} 0 & \color{white} 0 & \color{white} 0 & \textbf{+} & \color{white} 0 & \textbf{-} & \ddots & \color{white} 0 & \color{white} 0 \\
     \color{white} 0 & \color{white} 0 & \color{white} 0 & \color{white} 0 & \color{white} 0 & \textbf{+} & \textbf{-} & \textbf{+} & \color{white} 0 & \ddots & \ddots & \color{white} 0 \\
     \vspace{-0.1cm}\color{white} 0 & \color{white}0 & \color{white} 0 & \color{white} 0 & \textbf{+} & \textbf{-} & \hdots & \textbf{-} & \textbf{+} & \color{white} 0 & \textbf{-} & \textbf{+} \\
     \color{white} 0 & \color{white} 0 & \color{white} 0 & \iddots & \color{white} \textbf{-} & \color{white} \textbf{+} & \color{white} \textbf{-} & \color{white} \textbf{+} & \color{white} \textbf{-} & \ddots & \color{white} 0 & \color{white} 0 \\
     \vspace{-0.1cm}\color{white} 0 & \color{white}0 & \textbf{+} & \textbf{-} & \color{white} \textbf{+} & \color{white} \textbf{-} & \hdots & \color{white} \textbf{-} & \color{white} \textbf{+} & \textbf{-} & \textbf{+} & \color{white} 0 \\
     \vspace{-0.1cm}\color{white}0 & \textbf{+} & \textbf{-} & \color{white} \textbf{+} & \color{white} \textbf{-} & \hdots & \color{white} \textbf{-} & \color{white} \textbf{+} & \textbf{-} & \textbf{+} & \color{white} 0 & \color{white} 0 \\
     \color{white} 0 & \color{white} 0 & \ddots & \color{white} \textbf{-} & \color{white} \textbf{+} &\color{white}  \textbf{-} & \color{white} \textbf{+} & \color{white} \textbf{-} & \iddots & \color{white} 0 & \color{white} 0 & \color{white} 0 \\
     \vspace{-0.1cm}\textbf{+} & \textbf{-} & \color{white} 0 & \textbf{+} & \textbf{-} & \hdots & \textbf{-} & \textbf{+} & \color{white} 0 & \color{white} 0 & \color{white} 0 & \color{white} 0 \\
     \vspace{-0.1cm}\color{white}0 & \ddots & \ddots & \color{white} 0 & \textbf{+} & \textbf{-} & \textbf{+} & \color{white} 0 & \color{white} 0 & \color{white} 0 & \color{white} 0 & \color{white} 0 \\
     \color{white} 0 & \color{white} 0 & \ddots & \textbf{-} & \color{white} 0 & \textbf{+} & \color{white} 0 & \color{white} 0 & \color{white} 0 & \color{white} 0 & \color{white} 0 & \color{white} 0 \\
     \color{white} 0 & \color{white} 0 & \color{white} 0 & \textbf{+} & \color{white} 0 & \color{white} 0 & \color{white} 0 & \color{white} 0 & \color{white} 0 & \color{white} 0 & \color{white} 0 & \color{white} 0}
\hspace{-0.12cm}\nearrow\hspace{-0.12cm}\SmallMatrix{
     \color{white} 0 & \color{white} 0 & \color{white} 0 & \color{white} 0 & \color{white} 0 & \color{white} 0 & \textbf{+} & \color{white} 0 & \color{white} 0 & \color{white} 0 & \color{white} 0 & \color{white} 0 \\
     \color{white} 0 & \color{white} 0 & \color{white} 0 & \color{white} 0 & \color{white} 0 & \textbf{+} & \textbf{-} & \textbf{+} & \color{white} 0 & \color{white} 0 & \color{white} 0 & \color{white} 0 \\
     \vspace{-0.1cm}\color{white} 0 & \color{white}0 & \color{white} 0 & \color{white} 0 & \textbf{+} & \textbf{-} & \hdots & \textbf{-} & \textbf{+} & \color{white} 0 & \color{white} 0 & \color{white} 0 \\
     \color{white} 0 & \color{white} 0 & \color{white} 0 & \iddots & \color{white} \textbf{-} & \color{white} \textbf{+} & \color{white} \textbf{-} & \color{white} \textbf{+} & \color{white} \textbf{-} & \ddots & \color{white} 0 & \color{white} 0 \\
     \color{white} 0 & \color{white} 0 & \color{white} \textbf{+} & \color{white} \textbf{-} & \color{white} \textbf{+} & \color{white} \textbf{-} & \color{white} \hdots &\color{white}  \textbf{-} &\color{white}  \textbf{+} & \color{white} \textbf{-} & \color{white} \textbf{+} & \color{white} 0 \\
     \color{white} 0 & \textbf{+} & \textbf{-} & \color{white} \textbf{+} & \color{white} \textbf{-} & \color{white} \textbf{+} & \hdots & \color{white} \textbf{+} &\color{white}  \textbf{-} &\color{white}  \textbf{+} & \textbf{-} & \textbf{+} \\
     \textbf{+} & \textbf{-} & \color{white} \textbf{+} & \color{white} \textbf{-} & \color{white} \textbf{+} & \hdots & \color{white} \textbf{+} &\color{white}  \textbf{-} &\color{white}  \textbf{+} & \textbf{-} & \textbf{+}& \color{white} 0 \\
     \vspace{-0.1cm}\color{white}0 & \color{white} \textbf{+} &\color{white}  \textbf{-} & \color{white} \textbf{+} &\color{white}  \textbf{-} & \color{white} \hdots &\color{white}  \textbf{-} & \color{white} \textbf{+} & \color{white} \textbf{-} & \color{white} \textbf{+} & \color{white} 0 & \color{white} 0 \\
     \color{white} 0 & \color{white} 0 & \ddots &\color{white}  \textbf{-} & \color{white} \textbf{+} &\color{white}  \textbf{-} & \color{white} \textbf{+} & \color{white} \textbf{-} & \iddots & \color{white} 0 & \color{white} 0 & \color{white} 0 \\
     \color{white} 0 & \color{white} 0 & \color{white} 0 & \textbf{+} & \textbf{-} & \hdots & \textbf{-} & \textbf{+} & \color{white} 0 & \color{white} 0 & \color{white} 0 & \color{white} 0 \\
     \color{white} 0 & \color{white} 0 & \color{white} 0 & \color{white} 0 & \textbf{+} & \textbf{-} & \textbf{+} & \color{white} 0 & \color{white} 0 & \color{white} 0 & \color{white} 0 & \color{white} 0 \\
     \color{white} 0 & \color{white} 0 & \color{white} 0 & \color{white} 0 & \color{white} 0 & \textbf{+} & \color{white} 0 & \color{white} 0 & \color{white} 0 & \color{white} 0 & \color{white} 0 & \color{white} 0}
\]
\[
\hspace{-0.12cm}\nearrow\hspace{-0.12cm}\SmallMatrix{
     \vspace{-0.1cm}\color{white} 0 & \color{white}0 & \color{white} 0 & \textbf{+} & \color{white} 0 & \color{white} 0 & \color{white} 0 & \color{white} 0 & \color{white} 0 & \color{white} 0 & \color{white} 0 & \color{white} 0 \\
     \vspace{-0.1cm}\color{white} 0 & \color{white}0 & \iddots & \textbf{-} & \color{white} 0 & \color{white} 0 & \textbf{+} & \color{white} 0 & \color{white} 0 & \color{white} 0 & \color{white} 0 & \color{white} 0 \\
     \color{white} 0 & \iddots & \iddots & \color{white} 0 & \color{white} 0 & \textbf{+} & \textbf{-} & \textbf{+} & \color{white} 0 & \color{white} 0 & \color{white} 0 & \color{white} 0 \\
     \vspace{-0.1cm}\textbf{+} & \textbf{-} & \color{white} 0 & \color{white} 0 & \textbf{+} & \textbf{-} & \hdots & \textbf{-} & \textbf{+} & \color{white} 0 & \color{white} 0 & \color{white} 0 \\
     \color{white} 0 & \color{white} 0 & \color{white} 0 & \iddots & \color{white} \textbf{-} & \color{white} \textbf{+} & \color{white} \textbf{-} & \color{white} \textbf{+} & \color{white} \textbf{-} & \ddots & \color{white} 0 & \color{white} 0 \\
     \vspace{-0.1cm}\color{white} 0 & \color{white}0 & \textbf{+} & \textbf{-} & \color{white} \textbf{+} & \color{white} \textbf{-} & \hdots & \color{white} \textbf{-} & \color{white} \textbf{+} & \textbf{-} & \textbf{+} & \color{white} 0 \\
     \vspace{-0.1cm}\color{white}0 & \textbf{+} & \textbf{-} & \color{white} \textbf{+} & \color{white} \textbf{-} & \hdots & \color{white} \textbf{-} & \color{white} \textbf{+} & \textbf{-} & \textbf{+} & \color{white} 0 & \color{white} 0 \\
     \color{white} 0 & \color{white} 0 & \ddots & \color{white} \textbf{-} & \color{white} \textbf{+} &\color{white}  \textbf{-} & \color{white} \textbf{+} & \color{white} \textbf{-} & \iddots & \color{white} 0 & \color{white} 0 & \color{white} 0 \\
     \vspace{-0.1cm}\color{white} 0 & \color{white} 0 & \color{white} 0 & \textbf{+} & \textbf{-} & \hdots & \textbf{-} & \textbf{+} & \color{white} 0 & \color{white} 0 & \textbf{-} & \textbf{+} \\
     \vspace{-0.1cm}\color{white} 0 & \color{white} 0 & \color{white} 0 & \color{white} 0 & \textbf{+} & \textbf{-} & \textbf{+} & \color{white} 0 & \color{white} 0 & \iddots & \iddots & \color{white} 0 \\
     \color{white} 0 & \color{white} 0 & \color{white} 0 & \color{white} 0 & \color{white} 0 & \textbf{+} & \color{white} 0 & \color{white} 0 & \textbf{-} & \iddots & \color{white} 0 & \color{white} 0 \\
     \color{white} 0 & \color{white} 0 & \color{white} 0 & \color{white} 0 & \color{white} 0 & \color{white} 0 & \color{white} 0 & \color{white} 0 & \textbf{+} & \color{white} 0 & \color{white} 0 & \color{white} 0}
\hspace{-0.12cm}\nearrow\hspace{-0.12cm}\SmallMatrix{
     \color{white} 0 & \color{white} 0 & \color{white} 0 & \color{white} 0 & \color{white} 0 & \color{white} 0 & \color{white} 0 & \color{white} 0 & \color{white} 0 & \color{white} 0 & \textbf{+} & \color{white} 0 \\
     \color{white} 0 & \color{white} 0 & \color{white} 0 & \color{white} 0 & \color{white} 0 & \color{white} 0 & \color{white} 0 & \color{white} 0 & \color{white} 0 & \color{white} 0 & \color{white} 0 & \textbf{+} \\
     \color{white} 0 & \color{white} 0 & \color{white} 0 & \color{white} 0 & \color{white} 0 & \color{white} 0 & \textbf{+} & \color{white} 0 & \color{white} 0 & \color{white} 0 & \color{white} 0 & \color{white} 0 \\
     \vspace{-0.1cm}\color{white} 0 & \color{white}0 & \color{white} 0 & \color{white} 0 & \color{white} 0 & \textbf{+} & \textbf{-} & \textbf{+} & \color{white} 0 & \color{white} 0 & \color{white} 0 & \color{white} 0 \\
     \color{white} 0 & \color{white} 0 & \color{white} 0 & \color{white} 0 & \iddots & \color{white} \textbf{-} & \color{white} \textbf{+} & \color{white} \textbf{-} & \ddots & \color{white} 0 & \color{white} 0 & \color{white} 0 \\
     \vspace{-0.1cm}\color{white} 0 & \color{white}0 & \color{white} 0 & \textbf{+} & \textbf{-} & \color{white} \textbf{+} & \hdots & \color{white} \textbf{+} & \textbf{-} & \textbf{+} & \color{white} 0 & \color{white} 0 \\
     \vspace{-0.1cm}\color{white}0 & \color{white} 0 & \textbf{+} & \textbf{-} & \color{white} \textbf{+} & \hdots & \color{white} \textbf{+} & \textbf{-} & \textbf{+} & \color{white} 0 & \color{white} 0 & \color{white} 0 \\
     \color{white} 0 & \color{white} 0 & \color{white} 0 & \ddots & \color{white} \textbf{-} & \color{white} \textbf{+} & \color{white} \textbf{-} & \iddots & \color{white} 0 & \color{white} 0 & \color{white} 0 & \color{white} 0 \\
     \color{white} 0 & \color{white} 0 & \color{white} 0 & \color{white} 0 & \textbf{+} & \textbf{-} & \textbf{+} & \color{white} 0 & \color{white} 0 & \color{white} 0 & \color{white} 0 & \color{white} 0 \\
     \color{white} 0 & \color{white} 0 & \color{white} 0 & \color{white} 0 & \color{white} 0 & \textbf{+} & \color{white} 0 & \color{white} 0 & \color{white} 0 & \color{white} 0 & \color{white} 0 & \color{white} 0 \\
     \textbf{+} & \color{white} 0 & \color{white} 0 & \color{white} 0 & \color{white} 0 & \color{white} 0 & \color{white} 0 & \color{white} 0 & \color{white} 0 & \color{white} 0 & \color{white} 0 & \color{white} 0 \\
     \color{white} 0 & \textbf{+} & \color{white} 0 & \color{white} 0 & \color{white} 0 & \color{white} 0 & \color{white} 0 & \color{white} 0 & \color{white} 0 & \color{white} 0 & \color{white} 0 & \color{white} 0}
\hspace{-0.12cm}\nearrow
\hdots
\nearrow\hspace{-0.12cm}\SmallMatrix{
     \vspace{-0.1cm}\color{white}0 & \color{white} 0 & \color{white} 0 & \color{white} 0 & \color{white} 0 & \color{white} 0 & \color{white} 0 & \color{white} 0 & \color{white} 0 & \textbf{+} & \color{white} 0 & \color{white} 0 \\
     \color{white} 0 & \color{white} 0 & \color{white} 0 & \color{white} 0 & \color{white} 0 & \color{white} 0 & \color{white} 0 & \color{white} 0 & \color{white} 0 & \color{white} 0 & \ddots & \color{white} 0 \\
     \color{white} 0 & \color{white} 0 & \color{white} 0 & \color{white} 0 & \color{white} 0 & \color{white} 0 & \color{white} 0 & \color{white} 0 & \color{white} 0 & \color{white} 0 & \color{white} 0 & \textbf{+} \\
     \color{white} 0 & \color{white} 0 & \color{white} 0 & \color{white} 0 & \color{white} 0 & \color{white} 0 & \textbf{+} & \color{white} 0 & \color{white} 0 & \color{white} 0 & \color{white} 0 & \color{white} 0 \\
     \color{white} 0 & \color{white} 0 & \color{white} 0 & \color{white} 0 & \color{white} 0 & \textbf{+} & \textbf{-} & \textbf{+} & \color{white} 0 & \color{white} 0 & \color{white} 0 & \color{white} 0 \\
     \color{white} 0 & \color{white} 0 & \color{white} 0 & \color{white} 0 & \textbf{+} & \textbf{-} & \textbf{+} & \textbf{-} & \textbf{+} & \color{white} 0 & \color{white} 0 & \color{white} 0 \\
     \color{white} 0 & \color{white} 0 & \color{white} 0 & \textbf{+} & \textbf{-} & \textbf{+} & \textbf{-} & \textbf{+} & \color{white} 0 & \color{white} 0 & \color{white} 0 & \color{white} 0 \\
     \color{white} 0 & \color{white} 0 & \color{white} 0 & \color{white} 0 & \textbf{+} & \textbf{-} & \textbf{+} & \color{white} 0 & \color{white} 0 & \color{white} 0 & \color{white} 0 & \color{white} 0 \\
     \color{white} 0 & \color{white} 0 & \color{white} 0 & \color{white} 0 & \color{white} 0 & \textbf{+} & \color{white} 0 & \color{white} 0 & \color{white} 0 & \color{white} 0 & \color{white} 0 & \color{white} 0 \\
     \vspace{-0.1cm}\textbf{+} & \color{white}0 & \color{white} 0 & \color{white} 0 & \color{white} 0 & \color{white} 0 & \color{white} 0 & \color{white} 0 & \color{white} 0 & \color{white} 0 & \color{white} 0 & \color{white} 0 \\
     \color{white} 0 & \ddots & \color{white} 0 & \color{white} 0 & \color{white} 0 & \color{white} 0 & \color{white} 0 & \color{white} 0 & \color{white} 0 & \color{white} 0 & \color{white} 0 & \color{white} 0 \\
     \color{white} 0 & \color{white} 0 & \textbf{+} & \color{white} 0 & \color{white} 0 & \color{white} 0 & \color{white} 0 & \color{white} 0 & \color{white} 0 & \color{white} 0 & \color{white} 0 & \color{white} 0}
\hspace{-0.12cm}\nearrow\hspace{-0.12cm}\SmallMatrix{
     \color{white} 0 & \color{white} 0 & \color{white} 0 & \color{white} 0 & \color{white} 0 & \color{white} 0 & \color{white} 0 & \color{white} 0 & \color{white} 0 & \color{white} 0 & \color{white} 0 & \textbf{+} \\
     \vspace{-0.1cm}\color{white}0 & \color{white} 0 & \color{white} 0 & \color{white} 0 & \color{white} 0 & \color{white} 0 & \color{white} 0 & \color{white} 0 & \textbf{+} & \color{white} 0 & \color{white} 0 & \color{white} 0 \\
     \color{white} 0 & \color{white} 0 & \color{white} 0 & \color{white} 0 & \color{white} 0 & \color{white} 0 & \color{white} 0 & \color{white} 0 & \color{white} 0 & \ddots & \color{white} 0 & \color{white} 0 \\
     \color{white} 0 & \color{white} 0 & \color{white} 0 & \color{white} 0 & \color{white} 0 & \color{white} 0 & \color{white} 0 & \color{white} 0 & \color{white} 0 & \color{white} 0 & \textbf{+} & \color{white} 0 \\
     \color{white} 0 & \color{white} 0 & \color{white} 0 & \color{white} 0 & \color{white} 0 & \color{white} 0 & \textbf{+} & \color{white} 0 & \color{white} 0 & \color{white} 0 & \color{white} 0 & \color{white} 0 \\
     \color{white} 0 & \color{white} 0 & \color{white} 0 & \color{white} 0 & \color{white} 0 & \textbf{+} & \textbf{-} & \textbf{+} & \color{white} 0 & \color{white} 0 & \color{white} 0 & \color{white} 0 \\
     \color{white} 0 & \color{white} 0 & \color{white} 0 & \color{white} 0 & \textbf{+} & \textbf{-} & \textbf{+} & \color{white} 0 & \color{white} 0 & \color{white} 0 & \color{white} 0 & \color{white} 0 \\
     \color{white} 0 & \color{white} 0 & \color{white} 0 & \color{white} 0 & \color{white} 0 & \textbf{+} & \color{white} 0 & \color{white} 0 & \color{white} 0 & \color{white} 0 & \color{white} 0 & \color{white} 0 \\
     \vspace{-0.1cm}\color{white}0 & \textbf{+} & \color{white} 0 & \color{white} 0 & \color{white} 0 & \color{white} 0 & \color{white} 0 & \color{white} 0 & \color{white} 0 & \color{white} 0 & \color{white} 0 & \color{white} 0 \\
     \color{white} 0 & \color{white} 0 & \ddots & \color{white} 0 & \color{white} 0 & \color{white} 0 & \color{white} 0 & \color{white} 0 & \color{white} 0 & \color{white} 0 & \color{white} 0 & \color{white} 0 \\
     \color{white} 0 & \color{white} 0 & \color{white} 0 & \textbf{+} & \color{white} 0 & \color{white} 0 & \color{white} 0 & \color{white} 0 & \color{white} 0 & \color{white} 0 & \color{white} 0 & \color{white} 0 \\
     \textbf{+} & \color{white} 0 & \color{white} 0 & \color{white} 0 & \color{white} 0 & \color{white} 0 & \color{white} 0 & \color{white} 0 & \color{white} 0 & \color{white} 0 & \color{white} 0 & \color{white} 0}
\hspace{-0.12cm}\nearrow\hspace{-0.12cm}\SmallMatrix{
     \color{white} 0 & \color{white} 0 & \color{white} 0 & \color{white} 0 & \textbf{+} & \color{white} 0 & \color{white} 0 & \color{white} 0 & \color{white} 0 & \color{white} 0 & \color{white} 0 & \color{white} 0 \\
     \vspace{-0.1cm}\color{white} 0 & \color{white}0 & \color{white} 0 & \textbf{+} & \color{white} 0 & \color{white} 0 & \color{white} 0 & \color{white} 0 & \color{white} 0 & \color{white} 0 & \color{white} 0 & \color{white} 0 \\
     \color{white} 0 & \color{white} 0 & \iddots & \color{white} 0 & \color{white} 0 & \color{white} 0 & \color{white} 0 & \color{white} 0 & \color{white} 0 & \color{white} 0 & \color{white} 0 & \color{white} 0 \\
     \color{white} 0 & \textbf{+} & \color{white} 0 & \color{white} 0 & \color{white} 0 & \color{white} 0 & \color{white} 0 & \color{white} 0 & \color{white} 0 & \color{white} 0 & \color{white} 0 & \color{white} 0 \\
     \textbf{+} & \color{white} 0 & \color{white} 0 & \color{white} 0 & \color{white} 0 & \color{white} 0 & \color{white} 0 & \color{white} 0 & \color{white} 0 & \color{white} 0 & \color{white} 0 & \color{white} 0 \\
     \color{white} 0 & \color{white} 0 & \color{white} 0 & \color{white} 0 & \color{white} 0 & \color{white} 0 & \textbf{+} & \color{white} 0 & \color{white} 0 & \color{white} 0 & \color{white} 0 & \color{white} 0 \\
     \color{white} 0 & \color{white} 0 & \color{white} 0 & \color{white} 0 & \color{white} 0 & \textbf{+} & \color{white} 0 & \color{white} 0 & \color{white} 0 & \color{white} 0 & \color{white} 0 & \color{white} 0 \\
     \color{white} 0 & \color{white} 0 & \color{white} 0 & \color{white} 0 & \color{white} 0 & \color{white} 0 & \color{white} 0 & \color{white} 0 & \color{white} 0 & \color{white} 0 & \color{white} 0 & \textbf{+} \\
     \vspace{-0.1cm}\color{white}0 & \color{white} 0 & \color{white} 0 & \color{white} 0 & \color{white} 0 & \color{white} 0 & \color{white} 0 & \color{white} 0 & \color{white} 0 & \color{white} 0 & \textbf{+} & \color{white} 0 \\
     \color{white} 0 & \color{white} 0 & \color{white} 0 & \color{white} 0 & \color{white} 0 & \color{white} 0 & \color{white} 0 & \color{white} 0 & \color{white} 0 & \iddots & \color{white} 0 & \color{white} 0 \\
     \color{white} 0 & \color{white} 0 & \color{white} 0 & \color{white} 0 & \color{white} 0 & \color{white} 0 & \color{white} 0 & \color{white} 0 & \textbf{+} & \color{white} 0 & \color{white} 0 & \color{white} 0 \\
     \color{white} 0 & \color{white} 0 & \color{white} 0 & \color{white} 0 & \color{white} 0 & \color{white} 0 & \color{white} 0 & \textbf{+} & \color{white} 0 & \color{white} 0 &\color{white} 0 &  \color{white} 0}
\hspace{-0.12cm}\nearrow\hspace{-0.12cm}\SmallMatrix{
     \color{white} 0 & \color{white} 0 & \color{white} 0 & \color{white} 0 & \color{white} 0 & \textbf{+} & \color{white} 0 & \color{white} 0 & \color{white} 0 & \color{white} 0 & \color{white} 0 & \color{white} 0 \\
     \color{white} 0 & \color{white} 0 & \color{white} 0 & \color{white} 0 & \textbf{+} & \color{white} 0 & \color{white} 0 & \color{white} 0 & \color{white} 0 & \color{white} 0 & \color{white} 0 & \color{white} 0 \\
     \color{white} 0 & \color{white} 0 & \color{white} 0 & \textbf{+} & \color{white} 0 & \color{white} 0 & \color{white} 0 & \color{white} 0 & \color{white} 0 & \color{white} 0 & \color{white} 0 & \color{white} 0 \\
     \color{white} 0 & \color{white} 0 & \textbf{+} & \color{white} 0 & \color{white} 0 & \color{white} 0 & \color{white} 0 & \color{white} 0 & \color{white} 0 & \color{white} 0 & \color{white} 0 & \color{white} 0 \\
     \color{white} 0 & \textbf{+} & \color{white} 0 & \color{white} 0 & \color{white} 0 & \color{white} 0 & \color{white} 0 & \color{white} 0 & \color{white} 0 & \color{white} 0 & \color{white} 0 & \color{white} 0 \\
     \textbf{+} & \color{white} 0 & \color{white} 0 & \color{white} 0 & \color{white} 0 & \color{white} 0 & \color{white} 0 & \color{white} 0 & \color{white} 0 & \color{white} 0 & \color{white} 0 & \color{white} 0 \\
     \color{white} 0 & \color{white} 0 & \color{white} 0 & \color{white} 0 & \color{white} 0 & \color{white} 0 & \color{white} 0 & \color{white} 0 & \color{white} 0 & \color{white} 0 & \color{white} 0 & \textbf{+} \\
     \color{white} 0 & \color{white} 0 & \color{white} 0 & \color{white} 0 & \color{white} 0 & \color{white} 0 & \color{white} 0 & \color{white} 0 & \color{white} 0 & \color{white} 0 & \textbf{+} & \color{white} 0 \\
     \color{white} 0 & \color{white} 0 & \color{white} 0 & \color{white} 0 & \color{white} 0 & \color{white} 0 & \color{white} 0 & \color{white} 0 & \color{white} 0 & \textbf{+} & \color{white} 0 & \color{white} 0 \\
     \color{white} 0 & \color{white} 0 & \color{white} 0 & \color{white} 0 & \color{white} 0 & \color{white} 0 & \color{white} 0 & \color{white} 0 & \textbf{+} & \color{white} 0 & \color{white} 0 & \color{white} 0 \\
     \color{white} 0 & \color{white} 0 & \color{white} 0 & \color{white} 0 & \color{white} 0 & \color{white} 0 & \color{white} 0 & \textbf{+} & \color{white} 0 & \color{white} 0 & \color{white} 0 & \color{white} 0 \\
     \color{white} 0 & \color{white} 0 & \color{white} 0 & \color{white} 0 & \color{white} 0 & \color{white} 0 & \textbf{+} & \color{white} 0 & \color{white} 0 & \color{white} 0 & \color{white} 0 & \color{white} 0}
\]

It can be easily seen that each plane of this hypermatrix is an ASM. All vertical lines of $A$ corresponding to diamond positions of $L$ clearly have the alternating property. All vertical lines corresponding to the diagonal from $(2,m+2)$ to $(p-2,n-1)$, the diagonal from $(m+2,2)$ to $(n-1,p-2)$, the anti-diagonal from $(2,p-2)$ to $(p-2,2)$, and the anti-diagonal from $(m+2,n-1)$ to $(n-1, m+2)$ have exactly three non-zero entries, which alternate $+, - , +$. All other vertical lines contain exactly one non-zero entry, namely one $+$ entry. Therefore this is an ASHM.

\*

The $p$ entries occur in the following positions of the corresponding ASHL.
\[\Matrix{
    \color{white} 0 & \color{white} 0 & \color{white} 0 & \color{white} 0 & \color{white} 0 & \color{white} 0\color{white} 0 & p & \color{white} 0 & \color{white} 0 & \color{white} 0 & \color{white} 0 & \color{white} 00 \\
    \vspace{-0.1cm}\color{white} 0 & \color{white} 0 & \color{white} 0 & p & \color{white} 0 & p & p & p & p & \color{white} 0 & \color{white} 0 & \color{white} 0 \\
    \vspace{-0.1cm}\color{white} 0 & \color{white} 0 & \iddots & \color{white} 0 & \iddots &  p & \vdots & p & \ddots & \ddots & \color{white} 0 & \color{white} 0 \\
    \color{white} 0& p & \color{white} 0 & \iddots  & \iddots &\color{white}  p &\color{white}  p & \color{white} p &\ddots & \ddots & p & \color{white} 0 \\
    \color{white} 0\color{white} 0 & \color{white} 0 & p &  p &\color{white}  p & \color{white} p &\color{white}  p &\color{white}  p &\color{white}  p &  p & p & \color{white} 0 \\
    \color{white} 0 & p & p & \hdots & \color{white} p & \color{white} p & \color{white} p & \color{white} p & \color{white} p & \hdots & p & p \\
    p & p & \hdots & \color{white} p & \color{white} p & \color{white} p & \color{white} p & \color{white} p & \hdots & p & p & \color{white} 0  \\
    \vspace{-0.1cm}\color{white} 0 & p &  p & \color{white} p & \color{white} p & \color{white} p & \color{white} p & \color{white} p & p & p & \color{white} 0 & \color{white} 0 \\
    \vspace{-0.1cm}\color{white} 0 & p & \ddots & \ddots & \color{white} p & \color{white} p & \color{white} p & \iddots & \iddots & \color{white} 0 & p & \color{white} 0 \\
    \color{white} 0 & \color{white} 0 & \ddots & \ddots & p & \vdots & p & \iddots &\color{white} 0 &  \iddots & \color{white} 0 & \color{white} 0 \\
    \color{white} 0 & \color{white} 0 & \color{white} 0 & p & p & p & p & \color{white} 0 & p & \color{white} 0 & \color{white} 0 & \color{white} 0 \\
    \color{white} 00 & \color{white} 0 & \color{white} 0 & \color{white} 0 & \color{white} 0 & p & \color{white} 0\color{white} 0 & \color{white} 0 & \color{white} 0 & \color{white} 0 & \color{white} 0 & \color{white} 0}
\]

As outlined above, $p$ occurs as an entry in the diamond positions of $L$, and also occurs as every entry in the diagonal from $L_{2,m+2}$ to $L_{p-2,n-1}$, the diagonal from $L_{m+2,2}$ to $L_{n-1,p-2}$, the anti-diagonal from $L_{2,p-2}$ to $L_{p-2,2}$, and the anti-diagonal from $L_{m+2,n-1}$ to $L_{n-1, m+2}$.

\*

Therefore $p$ occurs as an entry of $L$ a total of $\frac{n^2+4n-20}{2}$ times:
\[2(1 + 3 + \dots + n-1) + 4\Big(\frac{n+1}{2}-3\Big) = 2\Big(\frac{n}{2}\Big)^2 + (2n -10) = \frac{n^2+4n-20}{2}\]
\end{proof}

Note that this bound is not tight. This is currently our best general construction, but we have constructed specific examples narrowly exceeding this bound.

\begin{example}
In the $n=11$ case, the construction outlined in the proof of Theorem \ref{max_entries} gives an $n \times n \times n$ ASHM $A$ with ASHL $L(A)$ containing the same entry $73$ times. The ASHM $B$, with $L(B)$ containing the same entry $77$ times, is obtained by the addition of T-blocks to $A$ as follows.
\[B = A + T_{2,1,3:\,3,2,4} + T_{9,10,3:\,10,11,4} - T_{2,2,4:\,11,10,8} + T_{9,1,8:\,10,2,9} + T_{2,10,8:\,3,11,9}\]
Explicitly, $B$ is the following ASHM.
\[B = \SmallMatrix{
     \color{white} 0 & \color{white} 0 & \color{white} 0 & \color{white} 0 & \color{white} 0 & \color{white} 0 & \textbf{+} & \color{white} 0 & \color{white} 0 & \color{white} 0 & \color{white} 0 \\
     \color{white} 0 & \color{white} 0 & \color{white} 0 & \color{white} 0 & \color{white} 0 & \color{white} 0 & \color{white} 0 & \textbf{+} & \color{white} 0 & \color{white} 0 & \color{white} 0 \\
     \color{white} 0 & \color{white} 0 & \color{white} 0 & \color{white} 0 & \color{white} 0 & \color{white} 0 & \color{white} 0 & \color{white} 0 & \textbf{+} & \color{white} 0 & \color{white} 0 \\
     \color{white} 0 & \color{white} 0 & \color{white} 0 & \color{white} 0 & \color{white} 0 & \color{white} 0 & \color{white} 0 & \color{white} 0 & \color{white} 0 & \textbf{+} & \color{white} 0 \\
     \color{white} 0 & \color{white} 0 & \color{white} 0 & \color{white} 0 & \color{white} 0 & \color{white} 0 & \color{white} 0 & \color{white} 0 & \color{white} 0 & \color{white} 0 & \textbf{+} \\
     \color{white} 0 & \color{white} 0 & \color{white} 0 & \color{white} 0 & \color{white} 0 & \textbf{+} & \color{white} 0 & \color{white} 0 & \color{white} 0 & \color{white} 0 & \color{white} 0 \\
     \textbf{+} & \color{white} 0 & \color{white} 0 & \color{white} 0 & \color{white} 0 & \color{white} 0 & \color{white} 0 & \color{white} 0 & \color{white} 0 & \color{white} 0 & \color{white} 0 \\
     \color{white} 0 & \textbf{+} & \color{white} 0 & \color{white} 0 & \color{white} 0 & \color{white} 0 & \color{white} 0 & \color{white} 0 & \color{white} 0 & \color{white} 0 & \color{white} 0 \\
     \color{white} 0 & \color{white} 0 & \textbf{+} & \color{white} 0 & \color{white} 0 & \color{white} 0 & \color{white} 0 & \color{white} 0 & \color{white} 0 & \color{white} 0 & \color{white} 0 \\
     \color{white} 0 & \color{white} 0 & \color{white} 0 & \textbf{+} & \color{white} 0 & \color{white} 0 & \color{white} 0 & \color{white} 0 & \color{white} 0 & \color{white} 0 & \color{white} 0 \\
     \color{white} 0 & \color{white} 0 & \color{white} 0 & \color{white} 0 & \textbf{+} & \color{white} 0 & \color{white} 0 & \color{white} 0 & \color{white} 0 & \color{white} 0 & \color{white} 0}
\hspace{-0.12cm}\nearrow\hspace{-0.12cm}\SmallMatrix{
     \textbf{+} & \color{white} 0 & \color{white} 0 & \color{white} 0 & \color{white} 0 & \color{white} 0 & \color{white} 0 & \color{white} 0 & \color{white} 0 & \color{white} 0 & \color{white} 0 \\
     \color{white} 0 & \color{white} 0 & \color{white} 0 & \textbf{+} & \color{white} 0 & \color{white} 0 & \color{white} 0 & \color{white} 0 & \color{white} 0 & \color{white} 0 & \color{white} 0 \\
     \color{white} 0 & \color{white} 0 & \textbf{+} & \color{white} 0 & \color{white} 0 & \color{white} 0 & \color{white} 0 & \color{white} 0 & \color{white} 0 & \color{white} 0 & \color{white} 0 \\
     \color{white} 0 & \textbf{+} & \color{white} 0 & \color{white} 0 & \color{white} 0 & \color{white} 0 & \color{white} 0 & \color{white} 0 & \color{white} 0 & \color{white} 0 & \color{white} 0 \\
     \color{white} 0 & \color{white} 0 & \color{white} 0 & \color{white} 0 & \color{white} 0 & \textbf{+} & \color{white} 0 & \color{white} 0 & \color{white} 0 & \color{white} 0 & \color{white} 0 \\
     \color{white} 0 & \color{white} 0 & \color{white} 0 & \color{white} 0 & \textbf{+} & \textbf{-} & \textbf{+} & \color{white} 0 & \color{white} 0 & \color{white} 0 & \color{white} 0 \\
     \color{white} 0 & \color{white} 0 & \color{white} 0 & \color{white} 0 & \color{white} 0 & \textbf{+} & \color{white} 0 & \color{white} 0 & \color{white} 0 & \color{white} 0 & \color{white} 0 \\
     \color{white} 0 & \color{white} 0 & \color{white} 0 & \color{white} 0 & \color{white} 0 & \color{white} 0 & \color{white} 0 & \color{white} 0 & \color{white} 0 & \textbf{+} & \color{white} 0 \\
     \color{white} 0 & \color{white} 0 & \color{white} 0 & \color{white} 0 & \color{white} 0 & \color{white} 0 & \color{white} 0 & \color{white} 0 & \textbf{+} & \color{white} 0 & \color{white} 0 \\
     \color{white} 0 & \color{white} 0 & \color{white} 0 & \color{white} 0 & \color{white} 0 & \color{white} 0 & \color{white} 0 & \textbf{+} & \color{white} 0 & \color{white} 0 & \color{white} 0 \\
     \color{white} 0 & \color{white} 0 & \color{white} 0 & \color{white} 0 & \color{white} 0 & \color{white} 0 & \color{white} 0 & \color{white} 0 & \color{white} 0 & \color{white} 0 & \textbf{+}}
\hspace{-0.12cm}\nearrow\hspace{-0.12cm}\SmallMatrix{
     \color{white} 0 & \color{white} 0 & \textbf{+} & \color{white} 0 & \color{white} 0 & \color{white} 0 & \color{white} 0 & \color{white} 0 & \color{white} 0 & \color{white} 0 & \color{white} 0 \\
     \textbf{+} & \color{white} 0 & \color{white} 0 & \color{white} 0 & \color{white} 0 & \color{white} 0 & \color{white} 0 & \color{white} 0 & \color{white} 0 & \color{white} 0 & \color{white} 0 \\
     \color{white} 0 & \textbf{+} & \color{white} 0 & \color{white} 0 & \color{white} 0 & \color{white} 0 & \color{white} 0 & \color{white} 0 & \color{white} 0 & \color{white} 0 & \color{white} 0 \\
     \color{white} 0 & \color{white} 0 & \color{white} 0 & \color{white} 0 & \color{white} 0 & \textbf{+} & \color{white} 0 & \color{white} 0 & \color{white} 0 & \color{white} 0 & \color{white} 0 \\
     \color{white} 0 & \color{white} 0 & \color{white} 0 & \color{white} 0 & \textbf{+} & \textbf{-} & \textbf{+} & \color{white} 0 & \color{white} 0 & \color{white} 0 & \color{white} 0 \\
     \color{white} 0 & \color{white} 0 & \color{white} 0 & \textbf{+} & \textbf{-} & \textbf{+} & \textbf{-} & \textbf{+} & \color{white} 0 & \color{white} 0 & \color{white} 0 \\
     \color{white} 0 & \color{white} 0 & \color{white} 0 & \color{white} 0 & \textbf{+} & \textbf{-} & \textbf{+} & \color{white} 0 & \color{white} 0 & \color{white} 0 & \color{white} 0 \\
     \color{white} 0 & \color{white} 0 & \color{white} 0 & \color{white} 0 & \color{white} 0 & \textbf{+} & \color{white} 0 & \color{white} 0 & \color{white} 0 & \color{white} 0 & \color{white} 0 \\
     \color{white} 0 & \color{white} 0 & \color{white} 0 & \color{white} 0 & \color{white} 0 & \color{white} 0 & \color{white} 0 & \color{white} 0 & \color{white} 0 & \textbf{+} & \color{white} 0 \\
     \color{white} 0 & \color{white} 0 & \color{white} 0 & \color{white} 0 & \color{white} 0 & \color{white} 0 & \color{white} 0 & \color{white} 0 & \color{white} 0 & \color{white} 0 & \textbf{+} \\
     \color{white} 0 & \color{white} 0 & \color{white} 0 & \color{white} 0 & \color{white} 0 & \color{white} 0 & \color{white} 0 & \color{white} 0 & \textbf{+} & \color{white} 0 & \color{white} 0}
\hspace{-0.12cm}\nearrow\hspace{-0.12cm}\SmallMatrix{
     \color{white} 0 & \textbf{+} & \color{white} 0 & \color{white} 0 & \color{white} 0 & \color{white} 0 & \color{white} 0 & \color{white} 0 & \color{white} 0 & \color{white} 0 & \color{white} 0 \\
     \color{white} 0 & \color{white} 0 & \color{white} 0 & \color{white} 0 & \color{white} 0 & \color{white} 0 & \color{white} 0 & \color{white} 0 & \color{white} 0 & \textbf{+} & \color{white} 0 \\
     \textbf{+} & \textbf{-} & \color{white} 0 & \color{white} 0 & \color{white} 0 & \textbf{+} & \color{white} 0 & \color{white} 0 & \color{white} 0 & \color{white} 0 & \color{white} 0 \\
     \color{white} 0 & \color{white} 0 & \color{white} 0 & \color{white} 0 & \textbf{+} & \textbf{-} & \textbf{+} & \color{white} 0 & \color{white} 0 & \color{white} 0 & \color{white} 0 \\
     \color{white} 0 & \color{white} 0 & \color{white} 0 & \textbf{+} & \textbf{-} & \textbf{+} & \textbf{-} & \textbf{+} & \color{white} 0 & \color{white} 0 & \color{white} 0 \\
     \color{white} 0 & \color{white} 0 & \textbf{+} & \textbf{-} & \textbf{+} & \textbf{-} & \textbf{+} & \textbf{-} & \textbf{+} & \color{white} 0 & \color{white} 0 \\
     \color{white} 0 & \color{white} 0 & \color{white} 0 & \textbf{+} & \textbf{-} & \textbf{+} & \textbf{-} & \textbf{+} & \color{white} 0 & \color{white} 0 & \color{white} 0 \\
     \color{white} 0 & \color{white} 0 & \color{white} 0 & \color{white} 0 & \textbf{+} & \textbf{-} & \textbf{+} & \color{white} 0 & \color{white} 0 & \color{white} 0 & \color{white} 0 \\
     \color{white} 0 & \color{white} 0 & \color{white} 0 & \color{white} 0 & \color{white} 0 & \textbf{+} & \color{white} 0 & \color{white} 0 & \color{white} 0 & \textbf{-} & \textbf{+} \\
     \color{white} 0 & \color{white} 0 & \color{white} 0 & \color{white} 0 & \color{white} 0 & \color{white} 0 & \color{white} 0 & \color{white} 0 & \color{white} 0 & \textbf{+} & \color{white} 0 \\
     \color{white} 0 & \textbf{+} & \color{white} 0 & \color{white} 0 & \color{white} 0 & \color{white} 0 & \color{white} 0 & \color{white} 0 & \color{white} 0 & \color{white} 0 & \color{white} 0}
\hspace{-0.12cm}\nearrow\hspace{-0.12cm}\SmallMatrix{
     \color{white} 0 & \color{white} 0 & \color{white} 0 & \color{white} 0 & \color{white} 0 & \color{white} 0 & \color{white} 0 & \textbf{+} & \color{white} 0 & \color{white} 0 & \color{white} 0 \\
     \color{white} 0 & \color{white} 0 & \color{white} 0 & \color{white} 0 & \color{white} 0 & \textbf{+} & \color{white} 0 & \textbf{-} & \textbf{+} & \color{white} 0 & \color{white} 0 \\
     \color{white} 0 & \color{white} 0 & \color{white} 0 & \color{white} 0 & \textbf{+} & \textbf{-} & \textbf{+} & \color{white} 0 & \textbf{-} & \textbf{+} & \color{white} 0 \\
     \color{white} 0 & \color{white} 0 & \color{white} 0 & \textbf{+} & \textbf{-} & \textbf{+} & \textbf{-} & \textbf{+} & \color{white} 0 & \textbf{-} & \textbf{+} \\
     \color{white} 0 & \color{white} 0 & \textbf{+} & \textbf{-} & \textbf{+} & \textbf{-} & \textbf{+} & \textbf{-} & \textbf{+} & \color{white} 0 & \color{white} 0 \\
     \color{white} 0 & \textbf{+} & \textbf{-} & \textbf{+} & \textbf{-} & \textbf{+} & \textbf{-} & \textbf{+} & \textbf{-} & \textbf{+} & \color{white} 0 \\
     \color{white} 0 & \color{white} 0 & \textbf{+} & \textbf{-} & \textbf{+} & \textbf{-} & \textbf{+} & \textbf{-} & \textbf{+} & \color{white} 0 & \color{white} 0 \\
     \textbf{+} & \textbf{-} & \color{white} 0 & \textbf{+} & \textbf{-} & \textbf{+} & \textbf{-} & \textbf{+} & \color{white} 0 & \color{white} 0 & \color{white} 0 \\
     \color{white} 0 & \textbf{+} & \textbf{-} & \color{white} 0 & \textbf{+} & \textbf{-} & \textbf{+} & \color{white} 0 & \color{white} 0 & \color{white} 0 & \color{white} 0 \\
     \color{white} 0 & \color{white} 0 & \textbf{+} & \textbf{-} & \color{white} 0 & \textbf{+} & \color{white} 0 & \color{white} 0 & \color{white} 0 & \color{white} 0 & \color{white} 0 \\
     \color{white} 0 & \color{white} 0 & \color{white} 0 & \textbf{+} & \color{white} 0 & \color{white} 0 & \color{white} 0 & \color{white} 0 & \color{white} 0 & \color{white} 0 & \color{white} 0}
\hspace{-0.12cm}\nearrow\hspace{-0.12cm}\SmallMatrix{
     \color{white} 0 & \color{white} 0 & \color{white} 0 & \color{white} 0 & \color{white} 0 & \textbf{+} & \color{white} 0 & \color{white} 0 & \color{white} 0 & \color{white} 0 & \color{white} 0 \\
     \color{white} 0 & \color{white} 0 & \color{white} 0 & \color{white} 0 & \textbf{+} & \textbf{-} & \textbf{+} & \color{white} 0 & \color{white} 0 & \color{white} 0 & \color{white} 0 \\
     \color{white} 0 & \color{white} 0 & \color{white} 0 & \textbf{+} & \textbf{-} & \textbf{+} & \textbf{-} & \textbf{+} & \color{white} 0 & \color{white} 0 & \color{white} 0 \\
     \color{white} 0 & \color{white} 0 & \textbf{+} & \textbf{-} & \textbf{+} & \textbf{-} & \textbf{+} & \textbf{-} & \textbf{+} & \color{white} 0 & \color{white} 0 \\
     \color{white} 0 & \textbf{+} & \textbf{-} & \textbf{+} & \textbf{-} & \textbf{+} & \textbf{-} & \textbf{+} & \textbf{-} & \textbf{+} & \color{white} 0 \\
     \textbf{+} & \textbf{-} & \textbf{+} & \textbf{-} & \textbf{+} & \textbf{-} & \textbf{+} & \textbf{-} & \textbf{+} & \textbf{-} & \textbf{+} \\%%%
     \color{white} 0 & \textbf{+} & \textbf{-} & \textbf{+} & \textbf{-} & \textbf{+} & \textbf{-} & \textbf{+} & \textbf{-} & \textbf{+} & \color{white} 0 \\
     \color{white} 0 & \color{white} 0 & \textbf{+} & \textbf{-} & \textbf{+} & \textbf{-} & \textbf{+} & \textbf{-} & \textbf{+} & \color{white} 0 & \color{white} 0 \\
     \color{white} 0 & \color{white} 0 & \color{white} 0 & \textbf{+} & \textbf{-} & \textbf{+} & \textbf{-} & \textbf{+} & \color{white} 0 & \color{white} 0 & \color{white} 0 \\
     \color{white} 0 & \color{white} 0 & \color{white} 0 & \color{white} 0 & \textbf{+} & \textbf{-} & \textbf{+} & \color{white} 0 & \color{white} 0 & \color{white} 0 & \color{white} 0 \\
     \color{white} 0 & \color{white} 0 & \color{white} 0 & \color{white} 0 & \color{white} 0 & \textbf{+} & \color{white} 0 & \color{white} 0 & \color{white} 0 & \color{white} 0 & \color{white} 0}
\]
\[
\hspace{-0.12cm}\nearrow\hspace{-0.12cm}\SmallMatrix{
     \color{white} 0 & \color{white} 0 & \color{white} 0 & \textbf{+} & \color{white} 0 & \color{white} 0 & \color{white} 0 & \color{white} 0 & \color{white} 0 & \color{white} 0 & \color{white} 0 \\
     \color{white} 0 & \color{white} 0 & \textbf{+} & \textbf{-} & \color{white} 0 & \textbf{+} & \color{white} 0 & \color{white} 0 & \color{white} 0 & \color{white} 0 & \color{white} 0 \\
     \color{white} 0 & \textbf{+} & \textbf{-} & \color{white} 0 & \textbf{+} & \textbf{-} & \textbf{+} & \color{white} 0 & \color{white} 0 & \color{white} 0 & \color{white} 0 \\
     \textbf{+} & \textbf{-} & \color{white} 0 & \textbf{+} & \textbf{-} & \textbf{+} & \textbf{-} & \textbf{+} & \color{white} 0 & \color{white} 0 & \color{white} 0 \\
     \color{white} 0 & \color{white} 0 & \textbf{+} & \textbf{-} & \textbf{+} & \textbf{-} & \textbf{+} & \textbf{-} & \textbf{+} & \color{white} 0 & \color{white} 0 \\
     \color{white} 0 & \textbf{+} & \textbf{-} & \textbf{+} & \textbf{-} & \textbf{+} & \textbf{-} & \textbf{+} & \textbf{-} & \textbf{+} & \color{white} 0 \\
     \color{white} 0 & \color{white} 0 & \textbf{+} & \textbf{-} & \textbf{+} & \textbf{-} & \textbf{+} & \textbf{-} & \textbf{+} & \color{white} 0 & \color{white} 0 \\
     \color{white} 0 & \color{white} 0 & \color{white} 0 & \textbf{+} & \textbf{-} & \textbf{+} & \textbf{-} & \textbf{+} & \color{white} 0 & \textbf{-} & \textbf{+} \\
     \color{white} 0 & \color{white} 0 & \color{white} 0 & \color{white} 0 & \textbf{+} & \textbf{-} & \textbf{+} & \color{white} 0 & \textbf{-} & \textbf{+} & \color{white} 0 \\
     \color{white} 0 & \color{white} 0 & \color{white} 0 & \color{white} 0 & \color{white} 0 & \textbf{+} & \color{white} 0 & \textbf{-} & \textbf{+} & \color{white} 0 & \color{white} 0 \\
     \color{white} 0 & \color{white} 0 & \color{white} 0 & \color{white} 0 & \color{white} 0 & \color{white} 0 & \color{white} 0 & \textbf{+} & \color{white} 0 & \color{white} 0 & \color{white} 0}
\hspace{-0.12cm}\nearrow\hspace{-0.12cm}\SmallMatrix{
     \color{white} 0 & \color{white} 0 & \color{white} 0 & \color{white} 0 & \color{white} 0 & \color{white} 0 & \color{white} 0 & \color{white} 0 & \color{white} 0 & \textbf{+} & \color{white} 0 \\
     \color{white} 0 & \textbf{+} & \color{white} 0 & \color{white} 0 & \color{white} 0 & \color{white} 0 & \color{white} 0 & \color{white} 0 & \color{white} 0 & \color{white} 0 & \color{white} 0 \\
     \color{white} 0 & \color{white} 0 & \color{white} 0 & \color{white} 0 & \color{white} 0 & \textbf{+} & \color{white} 0 & \color{white} 0 & \color{white} 0 & \textbf{-} & \textbf{+} \\
     \color{white} 0 & \color{white} 0 & \color{white} 0 & \color{white} 0 & \textbf{+} & \textbf{-} & \textbf{+} & \color{white} 0 & \color{white} 0 & \color{white} 0 & \color{white} 0 \\
     \color{white} 0 & \color{white} 0 & \color{white} 0 & \textbf{+} & \textbf{-} & \textbf{+} & \textbf{-} & \textbf{+} & \color{white} 0 & \color{white} 0 & \color{white} 0 \\
     \color{white} 0 & \color{white} 0 & \textbf{+} & \textbf{-} & \textbf{+} & \textbf{-} & \textbf{+} & \textbf{-} & \textbf{+} & \color{white} 0 & \color{white} 0 \\
     \color{white} 0 & \color{white} 0 & \color{white} 0 & \textbf{+} & \textbf{-} & \textbf{+} & \textbf{-} & \textbf{+} & \color{white} 0 & \color{white} 0 & \color{white} 0 \\
     \color{white} 0 & \color{white} 0 & \color{white} 0 & \color{white} 0 & \textbf{+} & \textbf{-} & \textbf{+} & \color{white} 0 & \color{white} 0 & \color{white} 0 & \color{white} 0 \\
     \textbf{+} & \textbf{-} & \color{white} 0 & \color{white} 0 & \color{white} 0 & \textbf{+} & \color{white} 0 & \color{white} 0 & \color{white} 0 & \color{white} 0 & \color{white} 0 \\
     \color{white} 0 & \textbf{+} & \color{white} 0 & \color{white} 0 & \color{white} 0 & \color{white} 0 & \color{white} 0 & \color{white} 0 & \color{white} 0 & \color{white} 0 & \color{white} 0 \\
     \color{white} 0 & \color{white} 0 & \color{white} 0 & \color{white} 0 & \color{white} 0 & \color{white} 0 & \color{white} 0 & \color{white} 0 & \color{white} 0 & \textbf{+} & \color{white} 0}
\hspace{-0.12cm}\nearrow\hspace{-0.12cm}\SmallMatrix{
     \color{white} 0 & \color{white} 0 & \color{white} 0 & \color{white} 0 & \color{white} 0 & \color{white} 0 & \color{white} 0 & \color{white} 0 & \textbf{+} & \color{white} 0 & \color{white} 0 \\
     \color{white} 0 & \color{white} 0 & \color{white} 0 & \color{white} 0 & \color{white} 0 & \color{white} 0 & \color{white} 0 & \color{white} 0 & \color{white} 0 & \color{white} 0 & \textbf{+} \\
     \color{white} 0 & \color{white} 0 & \color{white} 0 & \color{white} 0 & \color{white} 0 & \color{white} 0 & \color{white} 0 & \color{white} 0 & \color{white} 0 & \textbf{+} & \color{white} 0 \\
     \color{white} 0 & \color{white} 0 & \color{white} 0 & \color{white} 0 & \color{white} 0 & \textbf{+} & \color{white} 0 & \color{white} 0 & \color{white} 0 & \color{white} 0 & \color{white} 0 \\
     \color{white} 0 & \color{white} 0 & \color{white} 0 & \color{white} 0 & \textbf{+} & \textbf{-} & \textbf{+} & \color{white} 0 & \color{white} 0 & \color{white} 0 & \color{white} 0 \\
     \color{white} 0 & \color{white} 0 & \color{white} 0 & \textbf{+} & \textbf{-} & \textbf{+} & \textbf{-} & \textbf{+} & \color{white} 0 & \color{white} 0 & \color{white} 0 \\
     \color{white} 0 & \color{white} 0 & \color{white} 0 & \color{white} 0 & \textbf{+} & \textbf{-} & \textbf{+} & \color{white} 0 & \color{white} 0 & \color{white} 0 & \color{white} 0 \\
     \color{white} 0 & \color{white} 0 & \color{white} 0 & \color{white} 0 & \color{white} 0 & \textbf{+} & \color{white} 0 & \color{white} 0 & \color{white} 0 & \color{white} 0 & \color{white} 0 \\
     \color{white} 0 & \textbf{+} & \color{white} 0 & \color{white} 0 & \color{white} 0 & \color{white} 0 & \color{white} 0 & \color{white} 0 & \color{white} 0 & \color{white} 0 & \color{white} 0 \\
     \textbf{+} & \color{white} 0 & \color{white} 0 & \color{white} 0 & \color{white} 0 & \color{white} 0 & \color{white} 0 & \color{white} 0 & \color{white} 0 & \color{white} 0 & \color{white} 0 \\
     \color{white} 0 & \color{white} 0 & \textbf{+} & \color{white} 0 & \color{white} 0 & \color{white} 0 & \color{white} 0 & \color{white} 0 & \color{white} 0 & \color{white} 0 & \color{white} 0}
\hspace{-0.12cm}\nearrow\hspace{-0.12cm}\SmallMatrix{
     \color{white} 0 & \color{white} 0 & \color{white} 0 & \color{white} 0 & \color{white} 0 & \color{white} 0 & \color{white} 0 & \color{white} 0 & \color{white} 0 & \color{white} 0 & \textbf{+} \\
     \color{white} 0 & \color{white} 0 & \color{white} 0 & \color{white} 0 & \color{white} 0 & \color{white} 0 & \color{white} 0 & \textbf{+} & \color{white} 0 & \color{white} 0 & \color{white} 0 \\
     \color{white} 0 & \color{white} 0 & \color{white} 0 & \color{white} 0 & \color{white} 0 & \color{white} 0 & \color{white} 0 & \color{white} 0 & \textbf{+} & \color{white} 0 & \color{white} 0 \\
     \color{white} 0 & \color{white} 0 & \color{white} 0 & \color{white} 0 & \color{white} 0 & \color{white} 0 & \color{white} 0 & \color{white} 0 & \color{white} 0 & \textbf{+} & \color{white} 0 \\
     \color{white} 0 & \color{white} 0 & \color{white} 0 & \color{white} 0 & \color{white} 0 & \textbf{+} & \color{white} 0 & \color{white} 0 & \color{white} 0 & \color{white} 0 & \color{white} 0 \\
     \color{white} 0 & \color{white} 0 & \color{white} 0 & \color{white} 0 & \textbf{+} & \textbf{-} & \textbf{+} & \color{white} 0 & \color{white} 0 & \color{white} 0 & \color{white} 0 \\
     \color{white} 0 & \color{white} 0 & \color{white} 0 & \color{white} 0 & \color{white} 0 & \textbf{+} & \color{white} 0 & \color{white} 0 & \color{white} 0 & \color{white} 0 & \color{white} 0 \\
     \color{white} 0 & \textbf{+} & \color{white} 0 & \color{white} 0 & \color{white} 0 & \color{white} 0 & \color{white} 0 & \color{white} 0 & \color{white} 0 & \color{white} 0 & \color{white} 0 \\
     \color{white} 0 & \color{white} 0 & \textbf{+} & \color{white} 0 & \color{white} 0 & \color{white} 0 & \color{white} 0 & \color{white} 0 & \color{white} 0 & \color{white} 0 & \color{white} 0 \\
     \color{white} 0 & \color{white} 0 & \color{white} 0 & \textbf{+} & \color{white} 0 & \color{white} 0 & \color{white} 0 & \color{white} 0 & \color{white} 0 & \color{white} 0 & \color{white} 0 \\
     \textbf{+} & \color{white} 0 & \color{white} 0 & \color{white} 0 & \color{white} 0 & \color{white} 0 & \color{white} 0 & \color{white} 0 & \color{white} 0 & \color{white} 0 & \color{white} 0}
\hspace{-0.12cm}\nearrow\hspace{-0.12cm}\SmallMatrix{
     \color{white} 0 & \color{white} 0 & \color{white} 0 & \color{white} 0 & \textbf{+} & \color{white} 0 & \color{white} 0 & \color{white} 0 & \color{white} 0 & \color{white} 0 & \color{white} 0 \\
     \color{white} 0 & \color{white} 0 & \color{white} 0 & \textbf{+} & \color{white} 0 & \color{white} 0 & \color{white} 0 & \color{white} 0 & \color{white} 0 & \color{white} 0 & \color{white} 0 \\
     \color{white} 0 & \color{white} 0 & \textbf{+} & \color{white} 0 & \color{white} 0 & \color{white} 0 & \color{white} 0 & \color{white} 0 & \color{white} 0 & \color{white} 0 & \color{white} 0 \\
     \color{white} 0 & \textbf{+} & \color{white} 0 & \color{white} 0 & \color{white} 0 & \color{white} 0 & \color{white} 0 & \color{white} 0 & \color{white} 0 & \color{white} 0 & \color{white} 0 \\
     \textbf{+} & \color{white} 0 & \color{white} 0 & \color{white} 0 & \color{white} 0 & \color{white} 0 & \color{white} 0 & \color{white} 0 & \color{white} 0 & \color{white} 0 & \color{white} 0 \\
     \color{white} 0 & \color{white} 0 & \color{white} 0 & \color{white} 0 & \color{white} 0 & \textbf{+} & \color{white} 0 & \color{white} 0 & \color{white} 0 & \color{white} 0 & \color{white} 0 \\
     \color{white} 0 & \color{white} 0 & \color{white} 0 & \color{white} 0 & \color{white} 0 & \color{white} 0 & \color{white} 0 & \color{white} 0 & \color{white} 0 & \color{white} 0 & \textbf{+} \\
     \color{white} 0 & \color{white} 0 & \color{white} 0 & \color{white} 0 & \color{white} 0 & \color{white} 0 & \color{white} 0 & \color{white} 0 & \color{white} 0 & \textbf{+} & \color{white} 0 \\
     \color{white} 0 & \color{white} 0 & \color{white} 0 & \color{white} 0 & \color{white} 0 & \color{white} 0 & \color{white} 0 & \color{white} 0 & \textbf{+} & \color{white} 0 & \color{white} 0 \\
     \color{white} 0 & \color{white} 0 & \color{white} 0 & \color{white} 0 & \color{white} 0 & \color{white} 0 & \color{white} 0 & \textbf{+} & \color{white} 0 & \color{white} 0 & \color{white} 0 \\
     \color{white} 0 & \color{white} 0 & \color{white} 0 & \color{white} 0 & \color{white} 0 & \color{white} 0 & \textbf{+} & \color{white} 0 & \color{white} 0 & \color{white} 0 & \color{white} 0}
\]

Which has the following corresponding ASHL.
\[L(B) = 
\Matrix{
     2 & 4 & 3 & 7 & 11 & 6 & 1 & 5 & 9 & 8 & 10 \\
     3 & 8 & 7 & 6 & 6 & 6 & 6 & 6 & 5 & 4 & 9 \\
     4 & 6 & 6 & 6 & 6 & 6 & 6 & 6 & 6 & 6 & 8 \\
     7 & 6 & 6 & 6 & 6 & 6 & 6 & 6 & 6 & 6 & 5 \\
     11 & 6 & 6 & 6 & 6 & 6 & 6 & 6 & 6 & 6 & 1 \\
     6 & 6 & 6 & 6 & 6 & 6 & 6 & 6 & 6 & 6 & 6 \\
     1 & 6 & 6 & 6 & 6 & 6 & 6 & 6 & 6 & 6 & 11 \\
     5 & 6 & 6 & 6 & 6 & 6 & 6 & 6 & 6 & 6 & 7 \\
     8 & 6 & 6 & 6 & 6 & 6 & 6 & 6 & 6 & 6 & 4 \\
     9 & 8 & 5 & 6 & 6 & 6 & 6 & 6 & 7 & 4 & 3 \\
     10 & 4 & 9 & 5 & 1 & 6 & 11 & 7 & 3 & 8 & 2}\]
\end{example}

\begin{example}
In the $n=13$ case, the construction outlined in the proof of Theorem \ref{max_entries} gives an $n \times n \times n$ ASHM $A$ with ASHL $L(A)$ containing the same entry $101$ times. The following ASHM $B$ exceeds this, with $L(B)$ containing the same entry $103$ times.
\[B = 
\SmallMatrix{
     \color{white} 0 & \color{white} 0 & \color{white} 0 & \color{white} 0 & \color{white} 0 & \textbf{+} & \color{white} 0 & \color{white} 0 & \color{white} 0 & \color{white} 0 & \color{white} 0 & \color{white} 0 & \color{white} 0 \\
     \color{white} 0 & \color{white} 0 & \color{white} 0 & \color{white} 0 & \textbf{+} & \color{white} 0 & \color{white} 0 & \color{white} 0 & \color{white} 0 & \color{white} 0 & \color{white} 0 & \color{white} 0 & \color{white} 0 \\
     \color{white} 0 & \color{white} 0 & \color{white} 0 & \textbf{+} & \color{white} 0 & \color{white} 0 & \color{white} 0 & \color{white} 0 & \color{white} 0 & \color{white} 0 & \color{white} 0 & \color{white} 0 & \color{white} 0 \\
     \color{white} 0 & \color{white} 0 & \textbf{+} & \color{white} 0 & \color{white} 0 & \color{white} 0 & \color{white} 0 & \color{white} 0 & \color{white} 0 & \color{white} 0 & \color{white} 0 & \color{white} 0 & \color{white} 0 \\
     \color{white} 0 & \textbf{+} & \color{white} 0 & \color{white} 0 & \color{white} 0 & \color{white} 0 & \color{white} 0 & \color{white} 0 & \color{white} 0 & \color{white} 0 & \color{white} 0 & \color{white} 0 & \color{white} 0 \\
     \textbf{+} & \color{white} 0 & \color{white} 0 & \color{white} 0 & \color{white} 0 & \color{white} 0 & \color{white} 0 & \color{white} 0 & \color{white} 0 & \color{white} 0 & \color{white} 0 & \color{white} 0 & \color{white} 0 \\
     \color{white} 0 & \color{white} 0 & \color{white} 0 & \color{white} 0 & \color{white} 0 & \color{white} 0 & \textbf{+} & \color{white} 0 & \color{white} 0 & \color{white} 0 & \color{white} 0 & \color{white} 0 & \color{white} 0 \\
     \color{white} 0 & \color{white} 0 & \color{white} 0 & \color{white} 0 & \color{white} 0 & \color{white} 0 & \color{white} 0 & \color{white} 0 & \color{white} 0 & \color{white} 0 & \color{white} 0 & \color{white} 0 & \textbf{+} \\
     \color{white} 0 & \color{white} 0 & \color{white} 0 & \color{white} 0 & \color{white} 0 & \color{white} 0 & \color{white} 0 & \color{white} 0 & \color{white} 0 & \color{white} 0 & \color{white} 0 & \textbf{+} & \color{white} 0 \\
     \color{white} 0 & \color{white} 0 & \color{white} 0 & \color{white} 0 & \color{white} 0 & \color{white} 0 & \color{white} 0 & \color{white} 0 & \color{white} 0 & \color{white} 0 & \textbf{+} & \color{white} 0 & \color{white} 0 \\
     \color{white} 0 & \color{white} 0 & \color{white} 0 & \color{white} 0 & \color{white} 0 & \color{white} 0 & \color{white} 0 & \color{white} 0 & \color{white} 0 & \textbf{+} & \color{white} 0 & \color{white} 0 & \color{white} 0 \\
     \color{white} 0 & \color{white} 0 & \color{white} 0 & \color{white} 0 & \color{white} 0 & \color{white} 0 & \color{white} 0 & \color{white} 0 & \textbf{+} & \color{white} 0 & \color{white} 0 & \color{white} 0 & \color{white} 0 \\
     \color{white} 0 & \color{white} 0 & \color{white} 0 & \color{white} 0 & \color{white} 0 & \color{white} 0 & \color{white} 0 & \textbf{+} & \color{white} 0 & \color{white} 0 & \color{white} 0 & \color{white} 0 & \color{white} 0}
\hspace{-0.2cm}\nearrow\hspace{-0.2cm}\SmallMatrix{
     \color{white} 0 & \color{white} 0 & \color{white} 0 & \color{white} 0 & \color{white} 0 & \color{white} 0 & \color{white} 0 & \color{white} 0 & \color{white} 0 & \textbf{+} & \color{white} 0 & \color{white} 0 & \color{white} 0 \\
     \color{white} 0 & \color{white} 0 & \textbf{+} & \color{white} 0 & \color{white} 0 & \color{white} 0 & \color{white} 0 & \color{white} 0 & \color{white} 0 & \color{white} 0 & \color{white} 0 & \color{white} 0 & \color{white} 0 \\
     \color{white} 0 & \textbf{+} & \color{white} 0 & \color{white} 0 & \color{white} 0 & \color{white} 0 & \color{white} 0 & \color{white} 0 & \color{white} 0 & \color{white} 0 & \color{white} 0 & \color{white} 0 & \color{white} 0 \\
     \color{white} 0 & \color{white} 0 & \color{white} 0 & \color{white} 0 & \color{white} 0 & \color{white} 0 & \color{white} 0 & \color{white} 0 & \color{white} 0 & \color{white} 0 & \color{white} 0 & \color{white} 0 & \textbf{+} \\
     \color{white} 0 & \color{white} 0 & \color{white} 0 & \color{white} 0 & \color{white} 0 & \color{white} 0 & \color{white} 0 & \color{white} 0 & \textbf{+} & \color{white} 0 & \color{white} 0 & \color{white} 0 & \color{white} 0 \\
     \color{white} 0 & \color{white} 0 & \color{white} 0 & \color{white} 0 & \color{white} 0 & \color{white} 0 & \textbf{+} & \color{white} 0 & \color{white} 0 & \color{white} 0 & \color{white} 0 & \color{white} 0 & \color{white} 0 \\
     \color{white} 0 & \color{white} 0 & \color{white} 0 & \color{white} 0 & \color{white} 0 & \textbf{+} & \textbf{-} & \textbf{+} & \color{white} 0 & \color{white} 0 & \color{white} 0 & \color{white} 0 & \color{white} 0 \\
     \color{white} 0 & \color{white} 0 & \color{white} 0 & \color{white} 0 & \color{white} 0 & \color{white} 0 & \textbf{+} & \color{white} 0 & \color{white} 0 & \color{white} 0 & \color{white} 0 & \color{white} 0 & \color{white} 0 \\
     \color{white} 0 & \color{white} 0 & \color{white} 0 & \color{white} 0 & \textbf{+} & \color{white} 0 & \color{white} 0 & \color{white} 0 & \color{white} 0 & \color{white} 0 & \color{white} 0 & \color{white} 0 & \color{white} 0 \\
     \textbf{+} & \color{white} 0 & \color{white} 0 & \color{white} 0 & \color{white} 0 & \color{white} 0 & \color{white} 0 & \color{white} 0 & \color{white} 0 & \color{white} 0 & \color{white} 0 & \color{white} 0 & \color{white} 0 \\
     \color{white} 0 & \color{white} 0 & \color{white} 0 & \color{white} 0 & \color{white} 0 & \color{white} 0 & \color{white} 0 & \color{white} 0 & \color{white} 0 & \color{white} 0 & \color{white} 0 & \textbf{+} & \color{white} 0 \\
     \color{white} 0 & \color{white} 0 & \color{white} 0 & \color{white} 0 & \color{white} 0 & \color{white} 0 & \color{white} 0 & \color{white} 0 & \color{white} 0 & \color{white} 0 & \textbf{+} & \color{white} 0 & \color{white} 0 \\
     \color{white} 0 & \color{white} 0 & \color{white} 0 & \textbf{+} & \color{white} 0 & \color{white} 0 & \color{white} 0 & \color{white} 0 & \color{white} 0 & \color{white} 0 & \color{white} 0 & \color{white} 0 & \color{white} 0}
\hspace{-0.2cm}\nearrow\hspace{-0.2cm}\SmallMatrix{
     \color{white} 0 & \color{white} 0 & \color{white} 0 & \color{white} 0 & \color{white} 0 & \color{white} 0 & \color{white} 0 & \color{white} 0 & \color{white} 0 & \color{white} 0 & \textbf{+} & \color{white} 0 & \color{white} 0 \\
     \color{white} 0 & \color{white} 0 & \color{white} 0 & \color{white} 0 & \color{white} 0 & \color{white} 0 & \color{white} 0 & \color{white} 0 & \textbf{+} & \color{white} 0 & \color{white} 0 & \color{white} 0 & \color{white} 0 \\
     \color{white} 0 & \color{white} 0 & \color{white} 0 & \color{white} 0 & \color{white} 0 & \color{white} 0 & \color{white} 0 & \color{white} 0 & \color{white} 0 & \color{white} 0 & \color{white} 0 & \color{white} 0 & \textbf{+} \\
     \color{white} 0 & \color{white} 0 & \color{white} 0 & \color{white} 0 & \color{white} 0 & \color{white} 0 & \color{white} 0 & \color{white} 0 & \color{white} 0 & \textbf{+} & \color{white} 0 & \color{white} 0 & \color{white} 0 \\
     \color{white} 0 & \color{white} 0 & \color{white} 0 & \color{white} 0 & \color{white} 0 & \color{white} 0 & \textbf{+} & \color{white} 0 & \textbf{-} & \color{white} 0 & \color{white} 0 & \textbf{+} & \color{white} 0 \\
     \color{white} 0 & \color{white} 0 & \color{white} 0 & \color{white} 0 & \color{white} 0 & \textbf{+} & \textbf{-} & \textbf{+} & \color{white} 0 & \color{white} 0 & \color{white} 0 & \color{white} 0 & \color{white} 0 \\
     \color{white} 0 & \color{white} 0 & \color{white} 0 & \color{white} 0 & \textbf{+} & \textbf{-} & \textbf{+} & \textbf{-} & \textbf{+} & \color{white} 0 & \color{white} 0 & \color{white} 0 & \color{white} 0 \\
     \color{white} 0 & \color{white} 0 & \color{white} 0 & \color{white} 0 & \color{white} 0 & \textbf{+} & \textbf{-} & \textbf{+} & \color{white} 0 & \color{white} 0 & \color{white} 0 & \color{white} 0 & \color{white} 0 \\
     \color{white} 0 & \textbf{+} & \color{white} 0 & \color{white} 0 & \textbf{-} & \color{white} 0 & \textbf{+} & \color{white} 0 & \color{white} 0 & \color{white} 0 & \color{white} 0 & \color{white} 0 & \color{white} 0 \\
     \color{white} 0 & \color{white} 0 & \color{white} 0 & \textbf{+} & \color{white} 0 & \color{white} 0 & \color{white} 0 & \color{white} 0 & \color{white} 0 & \color{white} 0 & \color{white} 0 & \color{white} 0 & \color{white} 0 \\
     \textbf{+} & \color{white} 0 & \color{white} 0 & \color{white} 0 & \color{white} 0 & \color{white} 0 & \color{white} 0 & \color{white} 0 & \color{white} 0 & \color{white} 0 & \color{white} 0 & \color{white} 0 & \color{white} 0 \\
     \color{white} 0 & \color{white} 0 & \color{white} 0 & \color{white} 0 & \textbf{+} & \color{white} 0 & \color{white} 0 & \color{white} 0 & \color{white} 0 & \color{white} 0 & \color{white} 0 & \color{white} 0 & \color{white} 0 \\
     \color{white} 0 & \color{white} 0 & \textbf{+} & \color{white} 0 & \color{white} 0 & \color{white} 0 & \color{white} 0 & \color{white} 0 & \color{white} 0 & \color{white} 0 & \color{white} 0 & \color{white} 0 & \color{white} 0}
\hspace{-0.2cm}\nearrow\hspace{-0.2cm}\SmallMatrix{
     \textbf{+} & \color{white} 0 & \color{white} 0 & \color{white} 0 & \color{white} 0 & \color{white} 0 & \color{white} 0 & \color{white} 0 & \color{white} 0 & \color{white} 0 & \color{white} 0 & \color{white} 0 & \color{white} 0 \\
     \color{white} 0 & \color{white} 0 & \color{white} 0 & \color{white} 0 & \color{white} 0 & \color{white} 0 & \color{white} 0 & \color{white} 0 & \color{white} 0 & \textbf{+} & \color{white} 0 & \color{white} 0 & \color{white} 0 \\
     \color{white} 0 & \color{white} 0 & \color{white} 0 & \color{white} 0 & \color{white} 0 & \color{white} 0 & \color{white} 0 & \color{white} 0 & \color{white} 0 & \color{white} 0 & \textbf{+} & \color{white} 0 & \color{white} 0 \\
     \color{white} 0 & \color{white} 0 & \color{white} 0 & \color{white} 0 & \color{white} 0 & \color{white} 0 & \textbf{+} & \color{white} 0 & \color{white} 0 & \textbf{-} & \color{white} 0 & \textbf{+} & \color{white} 0 \\
     \color{white} 0 & \color{white} 0 & \color{white} 0 & \color{white} 0 & \color{white} 0 & \textbf{+} & \textbf{-} & \textbf{+} & \color{white} 0 & \color{white} 0 & \color{white} 0 & \color{white} 0 & \color{white} 0 \\
     \color{white} 0 & \color{white} 0 & \color{white} 0 & \color{white} 0 & \textbf{+} & \textbf{-} & \textbf{+} & \textbf{-} & \textbf{+} & \color{white} 0 & \color{white} 0 & \color{white} 0 & \color{white} 0 \\
     \color{white} 0 & \color{white} 0 & \color{white} 0 & \textbf{+} & \textbf{-} & \textbf{+} & \textbf{-} & \textbf{+} & \textbf{-} & \textbf{+} & \color{white} 0 & \color{white} 0 & \color{white} 0 \\
     \color{white} 0 & \color{white} 0 & \color{white} 0 & \color{white} 0 & \textbf{+} & \textbf{-} & \textbf{+} & \textbf{-} & \textbf{+} & \color{white} 0 & \color{white} 0 & \color{white} 0 & \color{white} 0 \\
     \color{white} 0 & \color{white} 0 & \color{white} 0 & \color{white} 0 & \color{white} 0 & \textbf{+} & \textbf{-} & \textbf{+} & \color{white} 0 & \color{white} 0 & \color{white} 0 & \color{white} 0 & \color{white} 0 \\
     \color{white} 0 & \textbf{+} & \color{white} 0 & \textbf{-} & \color{white} 0 & \color{white} 0 & \textbf{+} & \color{white} 0 & \color{white} 0 & \color{white} 0 & \color{white} 0 & \color{white} 0 & \color{white} 0 \\
     \color{white} 0 & \color{white} 0 & \textbf{+} & \color{white} 0 & \color{white} 0 & \color{white} 0 & \color{white} 0 & \color{white} 0 & \color{white} 0 & \color{white} 0 & \color{white} 0 & \color{white} 0 & \color{white} 0 \\
     \color{white} 0 & \color{white} 0 & \color{white} 0 & \textbf{+} & \color{white} 0 & \color{white} 0 & \color{white} 0 & \color{white} 0 & \color{white} 0 & \color{white} 0 & \color{white} 0 & \color{white} 0 & \color{white} 0 \\
     \color{white} 0 & \color{white} 0 & \color{white} 0 & \color{white} 0 & \color{white} 0 & \color{white} 0 & \color{white} 0 & \color{white} 0 & \color{white} 0 & \color{white} 0 & \color{white} 0 & \color{white} 0 & \textbf{+}}
\hspace{-0.2cm}\nearrow\hspace{-0.2cm}\SmallMatrix{
     \color{white} 0 & \color{white} 0 & \textbf{+} & \color{white} 0 & \color{white} 0 & \color{white} 0 & \color{white} 0 & \color{white} 0 & \color{white} 0 & \color{white} 0 & \color{white} 0 & \color{white} 0 & \color{white} 0 \\
     \color{white} 0 & \textbf{+} & \textbf{-} & \color{white} 0 & \color{white} 0 & \color{white} 0 & \color{white} 0 & \color{white} 0 & \color{white} 0 & \color{white} 0 & \color{white} 0 & \textbf{+} & \color{white} 0 \\
     \textbf{+} & \textbf{-} & \color{white} 0 & \color{white} 0 & \color{white} 0 & \color{white} 0 & \textbf{+} & \color{white} 0 & \color{white} 0 & \color{white} 0 & \color{white} 0 & \color{white} 0 & \color{white} 0 \\
     \color{white} 0 & \color{white} 0 & \color{white} 0 & \color{white} 0 & \color{white} 0 & \textbf{+} & \textbf{-} & \textbf{+} & \color{white} 0 & \color{white} 0 & \color{white} 0 & \color{white} 0 & \color{white} 0 \\
     \color{white} 0 & \color{white} 0 & \color{white} 0 & \color{white} 0 & \textbf{+} & \textbf{-} & \textbf{+} & \textbf{-} & \textbf{+} & \color{white} 0 & \color{white} 0 & \color{white} 0 & \color{white} 0 \\
     \color{white} 0 & \color{white} 0 & \color{white} 0 & \textbf{+} & \textbf{-} & \textbf{+} & \textbf{-} & \textbf{+} & \textbf{-} & \textbf{+} & \color{white} 0 & \color{white} 0 & \color{white} 0 \\
     \color{white} 0 & \color{white} 0 & \textbf{+} & \textbf{-} & \textbf{+} & \textbf{-} & \textbf{+} & \textbf{-} & \textbf{+} & \textbf{-} & \textbf{+} & \color{white} 0 & \color{white} 0 \\
     \color{white} 0 & \color{white} 0 & \color{white} 0 & \textbf{+} & \textbf{-} & \textbf{+} & \textbf{-} & \textbf{+} & \textbf{-} & \textbf{+} & \color{white} 0 & \color{white} 0 & \color{white} 0 \\
     \color{white} 0 & \color{white} 0 & \color{white} 0 & \color{white} 0 & \textbf{+} & \textbf{-} & \textbf{+} & \textbf{-} & \textbf{+} & \color{white} 0 & \color{white} 0 & \color{white} 0 & \color{white} 0 \\
     \color{white} 0 & \color{white} 0 & \color{white} 0 & \color{white} 0 & \color{white} 0 & \textbf{+} & \textbf{-} & \textbf{+} & \color{white} 0 & \color{white} 0 & \color{white} 0 & \color{white} 0 & \color{white} 0 \\
     \color{white} 0 & \color{white} 0 & \color{white} 0 & \color{white} 0 & \color{white} 0 & \color{white} 0 & \textbf{+} & \color{white} 0 & \color{white} 0 & \color{white} 0 & \color{white} 0 & \textbf{-} & \textbf{+} \\
     \color{white} 0 & \textbf{+} & \color{white} 0 & \color{white} 0 & \color{white} 0 & \color{white} 0 & \color{white} 0 & \color{white} 0 & \color{white} 0 & \color{white} 0 & \textbf{-} & \textbf{+} & \color{white} 0 \\
     \color{white} 0 & \color{white} 0 & \color{white} 0 & \color{white} 0 & \color{white} 0 & \color{white} 0 & \color{white} 0 & \color{white} 0 & \color{white} 0 & \color{white} 0 & \textbf{+} & \color{white} 0 & \color{white} 0}
\hspace{-0.2cm}\nearrow\hspace{-0.2cm}\SmallMatrix{
     \color{white} 0 & \color{white} 0 & \color{white} 0 & \color{white} 0 & \textbf{+} & \color{white} 0 & \color{white} 0 & \color{white} 0 & \color{white} 0 & \color{white} 0 & \color{white} 0 & \color{white} 0 & \color{white} 0 \\
     \color{white} 0 & \color{white} 0 & \color{white} 0 & \textbf{+} & \textbf{-} & \color{white} 0 & \textbf{+} & \color{white} 0 & \color{white} 0 & \color{white} 0 & \color{white} 0 & \color{white} 0 & \color{white} 0 \\
     \color{white} 0 & \color{white} 0 & \textbf{+} & \textbf{-} & \color{white} 0 & \textbf{+} & \textbf{-} & \textbf{+} & \color{white} 0 & \color{white} 0 & \color{white} 0 & \color{white} 0 & \color{white} 0 \\
     \color{white} 0 & \textbf{+} & \textbf{-} & \color{white} 0 & \textbf{+} & \textbf{-} & \textbf{+} & \textbf{-} & \textbf{+} & \color{white} 0 & \color{white} 0 & \color{white} 0 & \color{white} 0 \\
     \textbf{+} & \textbf{-} & \color{white} 0 & \textbf{+} & \textbf{-} & \textbf{+} & \textbf{-} & \textbf{+} & \textbf{-} & \textbf{+} & \color{white} 0 & \color{white} 0 & \color{white} 0 \\
     \color{white} 0 & \color{white} 0 & \textbf{+} & \textbf{-} & \textbf{+} & \textbf{-} & \textbf{+} & \textbf{-} & \textbf{+} & \textbf{-} & \textbf{+} & \color{white} 0 & \color{white} 0 \\
     \color{white} 0 & \textbf{+} & \textbf{-} & \textbf{+} & \textbf{-} & \textbf{+} & \textbf{-} & \textbf{+} & \textbf{-} & \textbf{+} & \textbf{-} & \textbf{+} & \color{white} 0 \\
     \color{white} 0 & \color{white} 0 & \textbf{+} & \textbf{-} & \textbf{+} & \textbf{-} & \textbf{+} & \textbf{-} & \textbf{+} & \textbf{-} & \textbf{+} & \color{white} 0 & \color{white} 0 \\
     \color{white} 0 & \color{white} 0 & \color{white} 0 & \textbf{+} & \textbf{-} & \textbf{+} & \textbf{-} & \textbf{+} & \textbf{-} & \textbf{+} & \color{white} 0 & \textbf{-} & \textbf{+} \\
     \color{white} 0 & \color{white} 0 & \color{white} 0 & \color{white} 0 & \textbf{+} & \textbf{-} & \textbf{+} & \textbf{-} & \textbf{+} & \color{white} 0 & \textbf{-} & \textbf{+} & \color{white} 0 \\
     \color{white} 0 & \color{white} 0 & \color{white} 0 & \color{white} 0 & \color{white} 0 & \textbf{+} & \textbf{-} & \textbf{+} & \color{white} 0 & \textbf{-} & \textbf{+} & \color{white} 0 & \color{white} 0 \\
     \color{white} 0 & \color{white} 0 & \color{white} 0 & \color{white} 0 & \color{white} 0 & \color{white} 0 & \textbf{+} & \color{white} 0 & \textbf{-} & \textbf{+} & \color{white} 0 & \color{white} 0 & \color{white} 0 \\
     \color{white} 0 & \color{white} 0 & \color{white} 0 & \color{white} 0 & \color{white} 0 & \color{white} 0 & \color{white} 0 & \color{white} 0 & \textbf{+} & \color{white} 0 & \color{white} 0 & \color{white} 0 & \color{white} 0}
\hspace{-0.2cm}\nearrow\]
\[\hspace{-0.2cm}\SmallMatrix{
     \color{white} 0 & \color{white} 0 & \color{white} 0 & \color{white} 0 & \color{white} 0 & \color{white} 0 & \textbf{+} & \color{white} 0 & \color{white} 0 & \color{white} 0 & \color{white} 0 & \color{white} 0 & \color{white} 0 \\
     \color{white} 0 & \color{white} 0 & \color{white} 0 & \color{white} 0 & \color{white} 0 & \textbf{+} & \textbf{-} & \textbf{+} & \color{white} 0 & \color{white} 0 & \color{white} 0 & \color{white} 0 & \color{white} 0 \\
     \color{white} 0 & \color{white} 0 & \color{white} 0 & \color{white} 0 & \textbf{+} & \textbf{-} & \textbf{+} & \textbf{-} & \textbf{+} & \color{white} 0 & \color{white} 0 & \color{white} 0 & \color{white} 0 \\
     \color{white} 0 & \color{white} 0 & \color{white} 0 & \textbf{+} & \textbf{-} & \textbf{+} & \textbf{-} & \textbf{+} & \textbf{-} & \textbf{+} & \color{white} 0 & \color{white} 0 & \color{white} 0 \\
     \color{white} 0 & \color{white} 0 & \textbf{+} & \textbf{-} & \textbf{+} & \textbf{-} & \textbf{+} & \textbf{-} & \textbf{+} & \textbf{-} & \textbf{+} & \color{white} 0 & \color{white} 0 \\
     \color{white} 0 & \textbf{+} & \textbf{-} & \textbf{+} & \textbf{-} & \textbf{+} & \textbf{-} & \textbf{+} & \textbf{-} & \textbf{+} & \textbf{-} & \textbf{+} & \color{white} 0 \\
     \textbf{+} & \textbf{-} & \textbf{+} & \textbf{-} & \textbf{+} & \textbf{-} & \textbf{+} & \textbf{-} & \textbf{+} & \textbf{-} & \textbf{+} & \textbf{-} & \textbf{+} \\
     \color{white} 0 & \textbf{+} & \textbf{-} & \textbf{+} & \textbf{-} & \textbf{+} & \textbf{-} & \textbf{+} & \textbf{-} & \textbf{+} & \textbf{-} & \textbf{+} & \color{white} 0 \\
     \color{white} 0 & \color{white} 0 & \textbf{+} & \textbf{-} & \textbf{+} & \textbf{-} & \textbf{+} & \textbf{-} & \textbf{+} & \textbf{-} & \textbf{+} & \color{white} 0 & \color{white} 0 \\
     \color{white} 0 & \color{white} 0 & \color{white} 0 & \textbf{+} & \textbf{-} & \textbf{+} & \textbf{-} & \textbf{+} & \textbf{-} & \textbf{+} & \color{white} 0 & \color{white} 0 & \color{white} 0 \\
     \color{white} 0 & \color{white} 0 & \color{white} 0 & \color{white} 0 & \textbf{+} & \textbf{-} & \textbf{+} & \textbf{-} & \textbf{+} & \color{white} 0 & \color{white} 0 & \color{white} 0 & \color{white} 0 \\
     \color{white} 0 & \color{white} 0 & \color{white} 0 & \color{white} 0 & \color{white} 0 & \textbf{+} & \textbf{-} & \textbf{+} & \color{white} 0 & \color{white} 0 & \color{white} 0 & \color{white} 0 & \color{white} 0 \\
     \color{white} 0 & \color{white} 0 & \color{white} 0 & \color{white} 0 & \color{white} 0 & \color{white} 0 & \textbf{+} & \color{white} 0 & \color{white} 0 & \color{white} 0 & \color{white} 0 & \color{white} 0 & \color{white} 0}
\nearrow\hspace{-0.2cm}\SmallMatrix{
     \color{white} 0 & \color{white} 0 & \color{white} 0 & \color{white} 0 & \color{white} 0 & \color{white} 0 & \color{white} 0 & \color{white} 0 & \color{white} 0 & \color{white} 0 & \color{white} 0 & \color{white} 0 & \textbf{+} \\
     \color{white} 0 & \color{white} 0 & \color{white} 0 & \color{white} 0 & \color{white} 0 & \color{white} 0 & \textbf{+} & \color{white} 0 & \color{white} 0 & \color{white} 0 & \color{white} 0 & \color{white} 0 & \color{white} 0 \\
     \color{white} 0 & \color{white} 0 & \color{white} 0 & \color{white} 0 & \color{white} 0 & \textbf{+} & \textbf{-} & \textbf{+} & \color{white} 0 & \color{white} 0 & \color{white} 0 & \color{white} 0 & \color{white} 0 \\
     \color{white} 0 & \color{white} 0 & \color{white} 0 & \color{white} 0 & \textbf{+} & \textbf{-} & \textbf{+} & \textbf{-} & \textbf{+} & \color{white} 0 & \color{white} 0 & \color{white} 0 & \color{white} 0 \\
     \color{white} 0 & \color{white} 0 & \color{white} 0 & \textbf{+} & \textbf{-} & \textbf{+} & \textbf{-} & \textbf{+} & \textbf{-} & \textbf{+} & \color{white} 0 & \color{white} 0 & \color{white} 0 \\
     \color{white} 0 & \color{white} 0 & \textbf{+} & \textbf{-} & \textbf{+} & \textbf{-} & \textbf{+} & \textbf{-} & \textbf{+} & \textbf{-} & \textbf{+} & \color{white} 0 & \color{white} 0 \\
     \color{white} 0 & \textbf{+} & \textbf{-} & \textbf{+} & \textbf{-} & \textbf{+} & \textbf{-} & \textbf{+} & \textbf{-} & \textbf{+} & \textbf{-} & \textbf{+} & \color{white} 0 \\
     \color{white} 0 & \color{white} 0 & \textbf{+} & \textbf{-} & \textbf{+} & \textbf{-} & \textbf{+} & \textbf{-} & \textbf{+} & \textbf{-} & \textbf{+} & \color{white} 0 & \color{white} 0 \\
     \color{white} 0 & \color{white} 0 & \color{white} 0 & \textbf{+} & \textbf{-} & \textbf{+} & \textbf{-} & \textbf{+} & \textbf{-} & \textbf{+} & \color{white} 0 & \color{white} 0 & \color{white} 0 \\
     \color{white} 0 & \color{white} 0 & \color{white} 0 & \color{white} 0 & \textbf{+} & \textbf{-} & \textbf{+} & \textbf{-} & \textbf{+} & \color{white} 0 & \color{white} 0 & \color{white} 0 & \color{white} 0 \\
     \color{white} 0 & \color{white} 0 & \color{white} 0 & \color{white} 0 & \color{white} 0 & \textbf{+} & \textbf{-} & \textbf{+} & \color{white} 0 & \color{white} 0 & \color{white} 0 & \color{white} 0 & \color{white} 0 \\
     \color{white} 0 & \color{white} 0 & \color{white} 0 & \color{white} 0 & \color{white} 0 & \color{white} 0 & \textbf{+} & \color{white} 0 & \color{white} 0 & \color{white} 0 & \color{white} 0 & \color{white} 0 & \color{white} 0 \\
     \textbf{+} & \color{white} 0 & \color{white} 0 & \color{white} 0 & \color{white} 0 & \color{white} 0 & \color{white} 0 & \color{white} 0 & \color{white} 0 & \color{white} 0 & \color{white} 0 & \color{white} 0 & \color{white} 0}
\hspace{-0.2cm}\nearrow\hspace{-0.2cm}\SmallMatrix{
     \color{white} 0 & \color{white} 0 & \color{white} 0 & \color{white} 0 & \color{white} 0 & \color{white} 0 & \color{white} 0 & \color{white} 0 & \color{white} 0 & \color{white} 0 & \color{white} 0 & \textbf{+} & \color{white} 0 \\
     \color{white} 0 & \color{white} 0 & \color{white} 0 & \color{white} 0 & \color{white} 0 & \color{white} 0 & \color{white} 0 & \color{white} 0 & \color{white} 0 & \color{white} 0 & \color{white} 0 & \color{white} 0 & \textbf{+} \\
     \color{white} 0 & \color{white} 0 & \color{white} 0 & \color{white} 0 & \color{white} 0 & \color{white} 0 & \textbf{+} & \color{white} 0 & \color{white} 0 & \color{white} 0 & \color{white} 0 & \color{white} 0 & \color{white} 0 \\
     \color{white} 0 & \color{white} 0 & \color{white} 0 & \color{white} 0 & \color{white} 0 & \textbf{+} & \textbf{-} & \textbf{+} & \color{white} 0 & \color{white} 0 & \color{white} 0 & \color{white} 0 & \color{white} 0 \\
     \color{white} 0 & \color{white} 0 & \color{white} 0 & \color{white} 0 & \textbf{+} & \textbf{-} & \textbf{+} & \textbf{-} & \textbf{+} & \color{white} 0 & \color{white} 0 & \color{white} 0 & \color{white} 0 \\
     \color{white} 0 & \color{white} 0 & \color{white} 0 & \textbf{+} & \textbf{-} & \textbf{+} & \textbf{-} & \textbf{+} & \textbf{-} & \textbf{+} & \color{white} 0 & \color{white} 0 & \color{white} 0 \\
     \color{white} 0 & \color{white} 0 & \textbf{+} & \textbf{-} & \textbf{+} & \textbf{-} & \textbf{+} & \textbf{-} & \textbf{+} & \textbf{-} & \textbf{+} & \color{white} 0 & \color{white} 0 \\
     \color{white} 0 & \color{white} 0 & \color{white} 0 & \textbf{+} & \textbf{-} & \textbf{+} & \textbf{-} & \textbf{+} & \textbf{-} & \textbf{+} & \color{white} 0 & \color{white} 0 & \color{white} 0 \\
     \color{white} 0 & \color{white} 0 & \color{white} 0 & \color{white} 0 & \textbf{+} & \textbf{-} & \textbf{+} & \textbf{-} & \textbf{+} & \color{white} 0 & \color{white} 0 & \color{white} 0 & \color{white} 0 \\
     \color{white} 0 & \color{white} 0 & \color{white} 0 & \color{white} 0 & \color{white} 0 & \textbf{+} & \textbf{-} & \textbf{+} & \color{white} 0 & \color{white} 0 & \color{white} 0 & \color{white} 0 & \color{white} 0 \\
     \color{white} 0 & \color{white} 0 & \color{white} 0 & \color{white} 0 & \color{white} 0 & \color{white} 0 & \textbf{+} & \color{white} 0 & \color{white} 0 & \color{white} 0 & \color{white} 0 & \color{white} 0 & \color{white} 0 \\
     \textbf{+} & \color{white} 0 & \color{white} 0 & \color{white} 0 & \color{white} 0 & \color{white} 0 & \color{white} 0 & \color{white} 0 & \color{white} 0 & \color{white} 0 & \color{white} 0 & \color{white} 0 & \color{white} 0 \\
     \color{white} 0 & \textbf{+} & \color{white} 0 & \color{white} 0 & \color{white} 0 & \color{white} 0 & \color{white} 0 & \color{white} 0 & \color{white} 0 & \color{white} 0 & \color{white} 0 & \color{white} 0 & \color{white} 0}
\hspace{-0.2cm}\nearrow\hspace{-0.2cm}\SmallMatrix{
     \color{white} 0 & \color{white} 0 & \color{white} 0 & \textbf{+} & \color{white} 0 & \color{white} 0 & \color{white} 0 & \color{white} 0 & \color{white} 0 & \color{white} 0 & \color{white} 0 & \color{white} 0 & \color{white} 0 \\
     \color{white} 0 & \color{white} 0 & \textbf{+} & \textbf{-} & \color{white} 0 & \color{white} 0 & \color{white} 0 & \color{white} 0 & \color{white} 0 & \color{white} 0 & \textbf{+} & \color{white} 0 & \color{white} 0 \\
     \color{white} 0 & \textbf{+} & \textbf{-} & \color{white} 0 & \color{white} 0 & \color{white} 0 & \color{white} 0 & \color{white} 0 & \color{white} 0 & \color{white} 0 & \color{white} 0 & \textbf{+} & \color{white} 0 \\
     \textbf{+} & \textbf{-} & \color{white} 0 & \color{white} 0 & \color{white} 0 & \color{white} 0 & \textbf{+} & \color{white} 0 & \color{white} 0 & \color{white} 0 & \color{white} 0 & \color{white} 0 & \color{white} 0 \\
     \color{white} 0 & \color{white} 0 & \color{white} 0 & \color{white} 0 & \color{white} 0 & \textbf{+} & \textbf{-} & \textbf{+} & \color{white} 0 & \color{white} 0 & \color{white} 0 & \color{white} 0 & \color{white} 0 \\
     \color{white} 0 & \color{white} 0 & \color{white} 0 & \color{white} 0 & \textbf{+} & \textbf{-} & \textbf{+} & \textbf{-} & \textbf{+} & \color{white} 0 & \color{white} 0 & \color{white} 0 & \color{white} 0 \\
     \color{white} 0 & \color{white} 0 & \color{white} 0 & \textbf{+} & \textbf{-} & \textbf{+} & \textbf{-} & \textbf{+} & \textbf{-} & \textbf{+} & \color{white} 0 & \color{white} 0 & \color{white} 0 \\
     \color{white} 0 & \color{white} 0 & \color{white} 0 & \color{white} 0 & \textbf{+} & \textbf{-} & \textbf{+} & \textbf{-} & \textbf{+} & \color{white} 0 & \color{white} 0 & \color{white} 0 & \color{white} 0 \\
     \color{white} 0 & \color{white} 0 & \color{white} 0 & \color{white} 0 & \color{white} 0 & \textbf{+} & \textbf{-} & \textbf{+} & \color{white} 0 & \color{white} 0 & \color{white} 0 & \color{white} 0 & \color{white} 0 \\
     \color{white} 0 & \color{white} 0 & \color{white} 0 & \color{white} 0 & \color{white} 0 & \color{white} 0 & \textbf{+} & \color{white} 0 & \color{white} 0 & \color{white} 0 & \color{white} 0 & \textbf{-} & \textbf{+} \\
     \color{white} 0 & \textbf{+} & \color{white} 0 & \color{white} 0 & \color{white} 0 & \color{white} 0 & \color{white} 0 & \color{white} 0 & \color{white} 0 & \color{white} 0 & \textbf{-} & \textbf{+} & \color{white} 0 \\
     \color{white} 0 & \color{white} 0 & \textbf{+} & \color{white} 0 & \color{white} 0 & \color{white} 0 & \color{white} 0 & \color{white} 0 & \color{white} 0 & \textbf{-} & \textbf{+} & \color{white} 0 & \color{white} 0 \\
     \color{white} 0 & \color{white} 0 & \color{white} 0 & \color{white} 0 & \color{white} 0 & \color{white} 0 & \color{white} 0 & \color{white} 0 & \color{white} 0 & \textbf{+} & \color{white} 0 & \color{white} 0 & \color{white} 0}
\hspace{-0.2cm}\nearrow\hspace{-0.2cm}\SmallMatrix{
     \color{white} 0 & \textbf{+} & \color{white} 0 & \color{white} 0 & \color{white} 0 & \color{white} 0 & \color{white} 0 & \color{white} 0 & \color{white} 0 & \color{white} 0 & \color{white} 0 & \color{white} 0 & \color{white} 0 \\
     \textbf{+} & \textbf{-} & \color{white} 0 & \textbf{+} & \color{white} 0 & \color{white} 0 & \color{white} 0 & \color{white} 0 & \color{white} 0 & \color{white} 0 & \color{white} 0 & \color{white} 0 & \color{white} 0 \\
     \color{white} 0 & \color{white} 0 & \textbf{+} & \color{white} 0 & \color{white} 0 & \color{white} 0 & \color{white} 0 & \color{white} 0 & \color{white} 0 & \color{white} 0 & \color{white} 0 & \color{white} 0 & \color{white} 0 \\
     \color{white} 0 & \textbf{+} & \color{white} 0 & \color{white} 0 & \color{white} 0 & \color{white} 0 & \color{white} 0 & \color{white} 0 & \color{white} 0 & \color{white} 0 & \color{white} 0 & \color{white} 0 & \color{white} 0 \\
     \color{white} 0 & \color{white} 0 & \color{white} 0 & \color{white} 0 & \color{white} 0 & \color{white} 0 & \textbf{+} & \color{white} 0 & \color{white} 0 & \color{white} 0 & \color{white} 0 & \color{white} 0 & \color{white} 0 \\
     \color{white} 0 & \color{white} 0 & \color{white} 0 & \color{white} 0 & \color{white} 0 & \textbf{+} & \textbf{-} & \textbf{+} & \color{white} 0 & \color{white} 0 & \color{white} 0 & \color{white} 0 & \color{white} 0 \\
     \color{white} 0 & \color{white} 0 & \color{white} 0 & \color{white} 0 & \textbf{+} & \textbf{-} & \textbf{+} & \textbf{-} & \textbf{+} & \color{white} 0 & \color{white} 0 & \color{white} 0 & \color{white} 0 \\
     \color{white} 0 & \color{white} 0 & \color{white} 0 & \color{white} 0 & \color{white} 0 & \textbf{+} & \textbf{-} & \textbf{+} & \color{white} 0 & \color{white} 0 & \color{white} 0 & \color{white} 0 & \color{white} 0 \\
     \color{white} 0 & \color{white} 0 & \color{white} 0 & \color{white} 0 & \color{white} 0 & \color{white} 0 & \textbf{+} & \color{white} 0 & \color{white} 0 & \color{white} 0 & \color{white} 0 & \color{white} 0 & \color{white} 0 \\
     \color{white} 0 & \color{white} 0 & \color{white} 0 & \color{white} 0 & \color{white} 0 & \color{white} 0 & \color{white} 0 & \color{white} 0 & \color{white} 0 & \color{white} 0 & \color{white} 0 & \textbf{+} & \color{white} 0 \\
     \color{white} 0 & \color{white} 0 & \color{white} 0 & \color{white} 0 & \color{white} 0 & \color{white} 0 & \color{white} 0 & \color{white} 0 & \color{white} 0 & \color{white} 0 & \textbf{+} & \color{white} 0 & \color{white} 0 \\
     \color{white} 0 & \color{white} 0 & \color{white} 0 & \color{white} 0 & \color{white} 0 & \color{white} 0 & \color{white} 0 & \color{white} 0 & \color{white} 0 & \textbf{+} & \color{white} 0 & \textbf{-} & \textbf{+} \\
     \color{white} 0 & \color{white} 0 & \color{white} 0 & \color{white} 0 & \color{white} 0 & \color{white} 0 & \color{white} 0 & \color{white} 0 & \color{white} 0 & \color{white} 0 & \color{white} 0 & \textbf{+} & \color{white} 0}
\hspace{-0.2cm}\nearrow\hspace{-0.2cm}\SmallMatrix{
     \color{white} 0 & \color{white} 0 & \color{white} 0 & \color{white} 0 & \color{white} 0 & \color{white} 0 & \color{white} 0 & \color{white} 0 & \textbf{+} & \color{white} 0 & \color{white} 0 & \color{white} 0 & \color{white} 0 \\
     \color{white} 0 & \color{white} 0 & \color{white} 0 & \color{white} 0 & \textbf{+} & \color{white} 0 & \color{white} 0 & \color{white} 0 & \color{white} 0 & \color{white} 0 & \color{white} 0 & \color{white} 0 & \color{white} 0 \\
     \color{white} 0 & \color{white} 0 & \color{white} 0 & \textbf{+} & \color{white} 0 & \color{white} 0 & \color{white} 0 & \color{white} 0 & \color{white} 0 & \color{white} 0 & \color{white} 0 & \color{white} 0 & \color{white} 0 \\
     \color{white} 0 & \color{white} 0 & \textbf{+} & \color{white} 0 & \color{white} 0 & \color{white} 0 & \color{white} 0 & \color{white} 0 & \color{white} 0 & \color{white} 0 & \color{white} 0 & \color{white} 0 & \color{white} 0 \\
     \color{white} 0 & \textbf{+} & \color{white} 0 & \color{white} 0 & \color{white} 0 & \color{white} 0 & \color{white} 0 & \color{white} 0 & \textbf{-} & \color{white} 0 & \color{white} 0 & \color{white} 0 & \textbf{+} \\
     \color{white} 0 & \color{white} 0 & \color{white} 0 & \color{white} 0 & \color{white} 0 & \color{white} 0 & \textbf{+} & \color{white} 0 & \color{white} 0 & \color{white} 0 & \color{white} 0 & \color{white} 0 & \color{white} 0 \\
     \color{white} 0 & \color{white} 0 & \color{white} 0 & \color{white} 0 & \color{white} 0 & \textbf{+} & \textbf{-} & \textbf{+} & \color{white} 0 & \color{white} 0 & \color{white} 0 & \color{white} 0 & \color{white} 0 \\
     \color{white} 0 & \color{white} 0 & \color{white} 0 & \color{white} 0 & \color{white} 0 & \color{white} 0 & \textbf{+} & \color{white} 0 & \color{white} 0 & \color{white} 0 & \color{white} 0 & \color{white} 0 & \color{white} 0 \\
     \textbf{+} & \color{white} 0 & \color{white} 0 & \color{white} 0 & \textbf{-} & \color{white} 0 & \color{white} 0 & \color{white} 0 & \color{white} 0 & \color{white} 0 & \color{white} 0 & \textbf{+} & \color{white} 0 \\
     \color{white} 0 & \color{white} 0 & \color{white} 0 & \color{white} 0 & \color{white} 0 & \color{white} 0 & \color{white} 0 & \color{white} 0 & \color{white} 0 & \color{white} 0 & \textbf{+} & \color{white} 0 & \color{white} 0 \\
     \color{white} 0 & \color{white} 0 & \color{white} 0 & \color{white} 0 & \color{white} 0 & \color{white} 0 & \color{white} 0 & \color{white} 0 & \color{white} 0 & \textbf{+} & \color{white} 0 & \color{white} 0 & \color{white} 0 \\
     \color{white} 0 & \color{white} 0 & \color{white} 0 & \color{white} 0 & \color{white} 0 & \color{white} 0 & \color{white} 0 & \color{white} 0 & \textbf{+} & \color{white} 0 & \color{white} 0 & \color{white} 0 & \color{white} 0 \\
     \color{white} 0 & \color{white} 0 & \color{white} 0 & \color{white} 0 & \textbf{+} & \color{white} 0 & \color{white} 0 & \color{white} 0 & \color{white} 0 & \color{white} 0 & \color{white} 0 & \color{white} 0 & \color{white} 0}
\hspace{-0.2cm}\nearrow\hspace{-0.2cm}\SmallMatrix{
     \color{white} 0 & \color{white} 0 & \color{white} 0 & \color{white} 0 & \color{white} 0 & \color{white} 0 & \color{white} 0 & \textbf{+} & \color{white} 0 & \color{white} 0 & \color{white} 0 & \color{white} 0 & \color{white} 0 \\
     \color{white} 0 & \textbf{+} & \color{white} 0 & \color{white} 0 & \color{white} 0 & \color{white} 0 & \color{white} 0 & \color{white} 0 & \color{white} 0 & \color{white} 0 & \color{white} 0 & \color{white} 0 & \color{white} 0 \\
     \color{white} 0 & \color{white} 0 & \color{white} 0 & \color{white} 0 & \color{white} 0 & \color{white} 0 & \color{white} 0 & \color{white} 0 & \color{white} 0 & \textbf{+} & \color{white} 0 & \color{white} 0 & \color{white} 0 \\
     \color{white} 0 & \color{white} 0 & \color{white} 0 & \color{white} 0 & \color{white} 0 & \color{white} 0 & \color{white} 0 & \color{white} 0 & \color{white} 0 & \color{white} 0 & \textbf{+} & \color{white} 0 & \color{white} 0 \\
     \color{white} 0 & \color{white} 0 & \color{white} 0 & \color{white} 0 & \color{white} 0 & \color{white} 0 & \color{white} 0 & \color{white} 0 & \textbf{+} & \color{white} 0 & \color{white} 0 & \color{white} 0 & \color{white} 0 \\
     \color{white} 0 & \color{white} 0 & \color{white} 0 & \color{white} 0 & \color{white} 0 & \color{white} 0 & \color{white} 0 & \color{white} 0 & \color{white} 0 & \color{white} 0 & \color{white} 0 & \color{white} 0 & \textbf{+} \\
     \color{white} 0 & \color{white} 0 & \color{white} 0 & \color{white} 0 & \color{white} 0 & \color{white} 0 & \textbf{+} & \color{white} 0 & \color{white} 0 & \color{white} 0 & \color{white} 0 & \color{white} 0 & \color{white} 0 \\
     \textbf{+} & \color{white} 0 & \color{white} 0 & \color{white} 0 & \color{white} 0 & \color{white} 0 & \color{white} 0 & \color{white} 0 & \color{white} 0 & \color{white} 0 & \color{white} 0 & \color{white} 0 & \color{white} 0 \\
     \color{white} 0 & \color{white} 0 & \color{white} 0 & \color{white} 0 & \textbf{+} & \color{white} 0 & \color{white} 0 & \color{white} 0 & \color{white} 0 & \color{white} 0 & \color{white} 0 & \color{white} 0 & \color{white} 0 \\
     \color{white} 0 & \color{white} 0 & \textbf{+} & \color{white} 0 & \color{white} 0 & \color{white} 0 & \color{white} 0 & \color{white} 0 & \color{white} 0 & \color{white} 0 & \color{white} 0 & \color{white} 0 & \color{white} 0 \\
     \color{white} 0 & \color{white} 0 & \color{white} 0 & \textbf{+} & \color{white} 0 & \color{white} 0 & \color{white} 0 & \color{white} 0 & \color{white} 0 & \color{white} 0 & \color{white} 0 & \color{white} 0 & \color{white} 0 \\
     \color{white} 0 & \color{white} 0 & \color{white} 0 & \color{white} 0 & \color{white} 0 & \color{white} 0 & \color{white} 0 & \color{white} 0 & \color{white} 0 & \color{white} 0 & \color{white} 0 & \textbf{+} & \color{white} 0 \\
     \color{white} 0 & \color{white} 0 & \color{white} 0 & \color{white} 0 & \color{white} 0 & \textbf{+} & \color{white} 0 & \color{white} 0 & \color{white} 0 & \color{white} 0 & \color{white} 0 & \color{white} 0 & \color{white} 0}
\]

\[L(B) = 
\Matrix{
    4 & 11 & 5 & 10 & 6 & 1 & 7 & 13 & 12 & 2 & 3 & 9 & 8 \\
    11 & 7 & 7 & 7 & 7 & 7 & 7 & 7 & 3 & 4 & 10 & 5 & 9 \\
    5 & 7 & 7 & 7 & 7 & 7 & 7 & 7 & 7 & 13 & 4 & 10 & 3 \\
    10 & 7 & 7 & 7 & 7 & 7 & 7 & 7 & 7 & 6 & 13 & 4 & 2 \\
    6 & 7 & 7 & 7 & 7 & 7 & 7 & 7 & 7 & 7 & 7 & 3 & 12 \\
    1 & 7 & 7 & 7 & 7 & 7 & 7 & 7 & 7 & 7 & 7 & 7 & 13 \\
    7 & 7 & 7 & 7 & 7 & 7 & 7 & 7 & 7 & 7 & 7 & 7 & 7 \\
    13 & 7 & 7 & 7 & 7 & 7 & 7 & 7 & 7 & 7 & 7 & 7 & 1 \\
    12 & 3 & 7 & 7 & 7 & 7 & 7 & 7 & 7 & 7 & 7 & 7 & 6 \\
    2 & 4 & 13 & 6 & 7 & 7 & 7 & 7 & 7 & 7 & 7 & 7 & 10 \\
    3 & 10 & 4 & 13 & 7 & 7 & 7 & 7 & 7 & 7 & 7 & 7 & 5 \\
    9 & 5 & 10 & 4 & 3 & 7 & 7 & 7 & 7 & 7 & 7 & 7 & 11 \\
    8 & 9 & 3 & 2 & 12 & 13 & 7 & 1 & 6 & 10 & 5 & 11 & 4}
\]
\end{example}

\*

We conclude by posing the following problem.

\begin{problem}Is it possible to construct an $n \times n \times n$ ASHM $A$ for which $(n-2)^2 +4$ entries of $L(A)$ are equal, for $n > 7$?\end{problem}

\section*{Acknowledgements}

I would firstly like to thank Dr Trent Marbach, who first brought \cite{ashmbib} to my attention. I would also like to thank my PhD supervisors, Dr Rachel Quinlan and Dr Kevin Jennings, for supplying guidance on the presentation of this paper. This research was funded by a Hardiman Research Scholarship.

\end{document}